\title{Homology of FI-modules}
\author{Thomas Church and Jordan S. Ellenberg
\thanks{The authors gratefully acknowledge support from the National Science Foundation. Church's work was supported in part by NSF grants DMS-1103807 and DMS-1350138, and by the Alfred P. Sloan Foundation. Ellenberg's work was supported in part by NSF grants DMS-1101267 and DMS-1402620, a Romnes Faculty Fellowship, and a John Simon Guggenheim Fellowship.}}
\newenvironment{theorem-prime}[1]{\innerthm}{\endinnerthm}
\newenvironment{theorem-doubleprime}[1]{\doubleinnerthm}{\enddoubleinnerthm}
\theoremstyle{plain}
\declaretheorem[parent=section]{theorem}
\declaretheorem[sibling=theorem,style=definition]{definition}
\declaretheorem[sibling=theorem,style=definition]{construction}
\declaretheorem[sibling=theorem]{proposition}
\declaretheorem[sibling=theorem]{lemma}
\declaretheorem[sibling=theorem]{corollary}
\declaretheorem[sibling=theorem,title=Proposition]{prop}
\declaretheorem[sibling=theorem,title=Remark,style=definition]{numberedremark}
\declaretheorem[numbered=no,title=Remark,style=definition]{unnumberedremark}
\declaretheorem[numbered=no,style=definition]{example}
\declaretheorem[name=Theorem]{maintheorem}
\declaretheorem[sibling=maintheorem,name=Corollary]{maincorollary}
\numberwithin{table}{section}
\newcommand{\nc}{\newcommand}
\nc{\dmo}{\DeclareMathOperator}
\nc{\Q}{\mathbb{Q}}
\nc{\Z}{\mathbb{Z}}
\nc{\N}{\mathbb{N}}
\newcommand{\C}{\mathcal{C}}
\nc{\CC}{\mathbb{C}}
\renewcommand{\AA}{\mathcal{A}}
\dmo\FI{FI}
\dmo\FIMod{FI-Mod}
\dmo\FB{FB}
\dmo\FBMod{FB-Mod}
\dmo\dMod{-Mod}
\nc{\ZMod}{\Z\dMod}
\nc{\CTMod}{\mathbb{C}[T]\dMod}
\nc{\CMod}{\mathbb{C}\dMod}
\nc{\op}{\text{op}}
\nc\FIop{\FI^{\op}}
\nc\FIopMod{\FI^{\op}\dMod}
\nc{\FIarrow}{\FI^{\rightsquigarrow}}
\dmo{\GL}{GL}
\dmo\Hom{Hom}
\DeclareMathOperator*{\colim}{colim}
\dmo\coker{coker}
\dmo\im{im}
\dmo\id{id}
\dmo\Sym{Sym}
\dmo\End{End}
\dmo\Ob{Ob}
\dmo\Inj{Inj}
\dmo\Bij{Bij}
\dmo\Tor{Tor}
\dmo\spn{span}
\dmo\Sets{Sets}
\dmo\Groups{Groups}
\dmo\Spec{Spec}
\nc\HH{\mathcal{H}}
\nc{\Ctilde}{\widetilde{C}}
\nc{\XX}{\widetilde{X}}
\nc\p{\mathfrak{p}}
\nc\q{\mathfrak{q}}
\nc{\chT}{\widetilde{C}}
\nc{\ch}{C}
\newcommand{\J}{\widetilde{J}}
\nc{\disjoint}{\sqcup}
\nc{\incThmD}{\iota} 
\nc{\iSV}{\iota} 
\nc{\iset}{i} 
\nc{\iso}{\simeq} 
\nc{\iB}{\varphi} 
\nc{\iC}{i} 
\nc{\comp}{\overline}
\nc{\ra}{\to}
\nc{\into}{\hookrightarrow}
\nc{\inj}{\hookrightarrow}
\nc{\onto}{\twoheadrightarrow}
\nc{\surj}{\twoheadrightarrow}
\nc{\cat}{\ref{catlabel}}
\nc{\cattag}{$\ast\ast$}
\renewcommand{\phi}{\varphi}
\renewcommand{\epsilon}{\varepsilon}
\nc{\bwedge}{{\textstyle\bigwedge}}
\nc{\coloneq}{\mathrel{\mathop:}\mkern-1.2mu=}
\nc{\abs}[1]{\left\lvert#1\right\rvert}
\newcommand{\set}[1]{\{#1\}}
\newcommand{\beq}{\begin{displaymath}}
\newcommand{\eeq}{\end{displaymath}}
\nc{\para}[1]{\medskip\noindent\textbf{#1.}}
\nc{\myarXiv}[1]{\href{http://arxiv.org/abs/#1}{arXiv:#1}.}
\nc{\mydoi}[1]{\href{http://dx.doi.org/#1}{doi:#1}.}
\nc{\myemail}[1]{\href{mailto:#1}{\nolinkurl{#1}}}
\newcommand{\proofofclaimend}{\hfill$\blacksquare$ \smallskip}
\begin{document}

\maketitle

\begin{abstract}
We prove an explicit and sharp upper bound for the Castelnuovo--Mumford regularity of an FI-module in terms of the degrees of its generators and relations.  We use this to refine a result of Putman on the stability of homology of congruence subgroups, extending his theorem to previously excluded small characteristics and to integral homology while maintaining explicit bounds for the stable range. This paper is freely available at \href{http://arxiv.org/abs/1506.01022}{http://arxiv.org/abs/1506.01022}.
\end{abstract}

\section{Introduction}

In recent years, there has been swift development in the study of various abelian categories related, in one way or another, to stable representation theory~\cite{CEF,CEFN,SamSnowdenGL,WiltshireGordon}.  The simplest of these is the category of {\em FI-modules} introduced in \cite{CEF}, which can be seen as a category of modules for a certain twisted commutative algebra.  A critical question about these categories is whether they are {\em Noetherian}; that is, whether a subobject of a finitely generated object is itself finitely generated.\footnote{In some contexts such abelian categories are called ``locally Noetherian'', the term ``Noetherian'' being reserved for categories where \emph{every} object is Noetherian. We  use ``Noetherian'' here in the broader sense, but we acknowledge that not every FI-module is finitely generated.}  

The category of FI-modules over $\Z$ is Noetherian \cite[Theorem A]{CEFN}, so any finitely generated FI-module $V$ can be resolved by finitely generated projectives.  One can ask for more\,---\,in the spirit of the notion of Castelnuovo--Mumford regularity from commutative algebra, one can ask for a  resolution of $V$ whose terms have explicitly bounded degree. Castelnuovo--Mumford regularity has proven to be a very useful invariant in commutative algebra, and we expect the same to be the case in this twisted commutative setting.  In the present paper, we prove a strong bound for the Castelnuovo--Mumford regularity of FI-modules, and explain how this regularity theorem allows us to refine a result of Putman~\cite{Putman} on the homology of congruence subgroups.  Although much of the paper is homological-algebraic in nature, the heart of the main results is \autoref{main:saturation}; this is a basic structure theorem for FI-modules, whose proof at the core boils down to a combinatorial argument on injections from $[d]$ to $[n]$ involving certain sets of integers enumerated by the Catalan numbers.

The theorems we obtain with these combinatorial methods naturally hold for FI-modules with coefficients in $\Z$. This is in contrast with earlier representation-theoretic approaches, which tend to apply only to FI-modules with coefficients in a field, usually required to have characteristic $0$.  On the other hand, the approach via representation theory provides a very beautiful theory unifying the study of many different categories  (see e.g.\ \cite{SS:stability}), while the arguments of the present paper are quite specific to FI-modules.  It would be very interesting to understand the extent to which the combinatorics in \S\ref{sec:combinatorics} can be generalized beyond FI-modules to the family of stable representation categories considered by Sam and Snowden.

\para{Notation} FI is the category of finite sets and injections; an \emph{FI-module} $W$ is a functor $W\colon \FI\to \ZMod$. Given a finite set $T$, we write $W_T$ for $W(T)$. For every $n\in \N=\{0,1,2,\ldots\}$ we set $[n]\coloneq \set{1,\ldots,n}$, and we write $W_n$ for $W_{[n]}=W([n])$. 

When $W$ is an FI-module 
we write $\deg W$ for the largest $k\in \N$ such that $W_k\neq 0$. To include edge cases such as $W=0$, we formally   define $\deg W\in \{-\infty\}\cup \N\cup \{\infty\}$ by
\[\deg W\coloneq \inf\big\{k\in \{-\infty\}\cup \N\cup \{\infty\}\,\big\vert\,W_n=0 \text{ for all }n>k\in \N\big\}.\]

\para{FI-homology} The functor $H_0\colon \FIMod\to \FIMod$ captures the notion of ``minimal generators'' for an FI-module. Given an FI-module $W$, the FI-module $H_0(W)$ is the quotient of $W$ defined by
\[H_0(W)_T\coloneq W_T\ /\ \spn\big(\im f_*\colon W_S\to W_T\,|\,f\colon S\into T, \abs{S}<\abs{T}\big).\] This is the largest FI-module quotient of $W$ with the property that all maps $f_*\colon H_0(W)_S\to H_0(W)_T$ with  $\abs{S}<\abs{T}$ are zero. An FI-module $W$ is \emph{generated in degree $\leq m$} if $\deg H_0(W)\leq m$.

The functor $H_0$ is right-exact, and we define $H_p\colon \FIMod\to \FIMod$ to be its $p$-th left-derived functor. One can think of $H_p(W)$ as giving minimal generators for the ``$p$-th syzygy" of the FI-module $W$. 
Our first main theorem bounds 
$H_p(W)$ in terms of 
$H_0(W)$ and $H_1(W)$. 
\begin{maintheorem}
\label{main:regularity}
Let $W$ be an FI-module with $\deg H_0(W) \leq k$ and $\deg H_1(W) \leq d$. Then $W$ has 
regularity $\leq k+d-1$: that is, for all $p>0$ we have
\[\deg H_p(W)\leq p+k+d-1.\]
\end{maintheorem}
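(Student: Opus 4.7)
The plan is to induct on $p$, with the base case $p=1$ being immediate since $\deg H_1(W) \leq d \leq k+d$ (we may assume $W \neq 0$, hence $k \geq 0$). For the inductive step $p \geq 2$, I would produce a ``free cover'' by setting $M = M(H_0(W))$, the left Kan extension of $H_0(W)$ from $\FB$ to $\FI$; this gives a short exact sequence $0 \to K \to M \twoheadrightarrow W \to 0$ in which $M$ is projective and generated in degree $\leq k$, and $H_0(M) \to H_0(W)$ is an isomorphism by construction. The associated long exact sequence in FI-homology collapses to give $H_0(K) \cong H_1(W)$ and $H_p(W) \cong H_{p-1}(K)$ for $p \geq 2$, so $\deg H_0(K) \leq d$ and the problem reduces to bounding $\deg H_{p-1}(K)$.

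Applying the inductive hypothesis to $K$ at level $p-1$ produces the bound $\deg H_{p-1}(K) \leq (p-1) + d + \deg H_1(K) - 1$, so matching the target $p+k+d-1$ requires control of $\deg H_1(K)$. Because $H_1(K) \cong H_2(W)$, any purely formal bound on $H_1(K)$ via the main theorem would effectively be the theorem at $p=2$, creating a circularity. The only way out is to isolate a statement about the kernel $K$ of a free cover that is strictly stronger than what falls out of the long exact sequence\,---\,exploiting the fact that $M$ is not merely an abstract FI-module, but is built functorially from injections $[k] \hookrightarrow [n]$.

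The main obstacle is proving precisely such a sublemma, and this is where I expect the saturation theorem \autoref{main:saturation} advertised in the introduction to be essential. Its combinatorial content\,---\,on injections with Catalan-indexed data\,---\,should be used to construct, for each $W$, an auxiliary presentation or decomposition of $K$ whose relations can be controlled directly in terms of $k$ and $d$, perhaps via a careful interaction with the shift functor $(SW)_T = W_{T \sqcup \{\star\}}$ and its derivative. Once that structural sublemma is in hand, the induction should close formally via the long exact sequence above. Designing the sublemma so that it both follows from the saturation statement and propagates cleanly through the induction is, in my view, the real heart of the argument; everything else is bookkeeping via the long exact sequence for a projective resolution.
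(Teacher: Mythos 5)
There is a genuine gap: your write-up correctly diagnoses the circularity in the naive induction on $p$, but the ``structural sublemma'' you defer to is exactly the heart of the proof, and it is not supplied. What the paper actually proves at that point is a uniform bound on the derived functors of the iterated derivative (\autoref{thm:Dkhomology}): if $W$ is generated in degree $\leq k$ and related in degree $\leq d$, then $\deg H_p^{D^a}(W)\leq d+\min(k,d)-1-a+p$ for all $a,p\geq 1$. The case $p=1$ is \autoref{main:saturation}, reinterpreted via \autoref{lem:DaVM} and \autoref{rem:saturationreinterpreted} as the vanishing of $\ker(D^aV\to D^aM)$ in high degrees; the general case follows because $H_p^D=0$ for $p>1$ and the Grothendieck spectral sequence for $D\circ D^{a-1}$ degenerates to short exact sequences. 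Crucially, the way this feeds back into the regularity bound is \emph{not} by applying the regularity statement inductively to the kernel $K$ (which is the circularity you noticed), but by a different induction: take a minimal free resolution with syzygies $X_p$, and show by induction on $p$ that $X_p$ is generated in degree $\leq N+p$ with $N=k+d-1$. Minimality gives $D^{N+p}M_{p-1}=0$ by \autoref{pr:da}, so $D^{N+p}X_p$ is a quotient of $H_1^{D^{N+p}}(X_{p-1})\cong H_p^{D^{N+p}}(W)$, whose degree is $\leq 0$ by \autoref{thm:Dkhomology}; then \autoref{pr:da} converts this back into the generation bound for $X_p$, and finally $H_p(W)\cong H_1(X_{p-1})$ injects into $H_0(X_p)$. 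None of this machinery (the functors $H_p^{D^a}$, the degeneration argument, the conversion between $\deg D^aX$ and generation degree, the role of minimality) appears in your proposal, so the proof is not yet closed.

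Two further technical problems with the setup you do give. First, the free cover $M=M(H_0(W))$ need not exist: as the paper notes, there may be no map $H_0(W)_n\to W_n$ at all (e.g.\ $H_0(W)_n\cong\Z/2\Z$ while $W_n$ is free abelian), so one must instead use $M=M(W_{\leq k})$; but then $H_0(M)\to H_0(W)$ is not an isomorphism, the long exact sequence only yields $\deg H_0(K)\leq\max(k,d)$ (\autoref{pr:genrel}), not $H_0(K)\cong H_1(W)$, and in particular your identification $H_1(K)\cong H_2(W)$ also fails in general. Second, to get the sharp constant $k+d-1$ (rather than $\max(k,d)+d-1$) one needs the preliminary reduction to the case $k<d$ by replacing $W$ with $W_{\langle\leq\min(k,d-1)\rangle}$, using \autoref{pr:ramosCE} to see that this does not change $H_p$ for $p>0$; this step is absent from your sketch.
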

It is natural to shift our indexing by writing $d_p(W)\coloneq \deg H_p(W)-p$; with this indexing, \autoref{main:regularity} states simply that $d_p(W)\leq d_0(W)+d_1(W)$.

We will define in \S\ref{section:free-and-generation} the notion of a \emph{free FI-module}, and we will see that $H_p(W)$ can  be computed explicitly from a free resolution of $W$. For now, we record one corollary:
\begin{maincorollary}
\label{maincor:kernel}
Let $M$ be a free FI-module generated in degree $\leq k$, and let $V$ be an arbitrary FI-module generated in degree $\leq d$. For any homomorphism $V\to M$, the kernel $\ker(V\to M)$ is generated in degree $\leq k+d+1$.
\end{maincorollary}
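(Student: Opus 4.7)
The plan is to apply \autoref{main:regularity} to the cokernel of $V\to M$. Set $I\coloneq \im(V\to M)$ and $K\coloneq \ker(V\to M)$, which sit in the two short exact sequences
\[0\to K\to V\to I\to 0\qquad \text{and}\qquad 0\to I\to M\to M/I\to 0.\]
Free FI-modules are acyclic for $H_*$ (so $H_p(M)=0$ for all $p\geq 1$, as will be verified in \S\ref{section:free-and-generation}), so the long exact sequence associated to the second short exact sequence collapses to an isomorphism $H_1(I)\cong H_2(M/I)$ together with a surjection $H_0(M)\onto H_0(M/I)$ and an injection $H_1(M/I)\into H_0(I)$.

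From the long exact sequence of the first short exact sequence, the fragment
\[H_1(I)\to H_0(K)\to H_0(V)\to H_0(I)\to 0\]
exhibits $H_0(K)$ as an extension of a subobject of $H_0(V)$ by a quotient of $H_1(I)$, so $\deg H_0(K)\leq \max\big(\deg H_0(V),\,\deg H_1(I)\big)$. Since $\deg H_0(V)\leq d$ by hypothesis, the corollary will follow as soon as I show $\deg H_1(I)\leq k+d+1$.

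The key step is to bound $H_1(I)\cong H_2(M/I)$ by applying \autoref{main:regularity} to $M/I$. From the surjection $H_0(M)\onto H_0(M/I)$ I get $\deg H_0(M/I)\leq \deg H_0(M)\leq k$. From the injection $H_1(M/I)\into H_0(I)$, combined with the fact that $I$ is a quotient of $V$ (so $H_0(I)$ is a quotient of $H_0(V)$ by right-exactness of $H_0$), I get $\deg H_1(M/I)\leq \deg H_0(V)\leq d$. \autoref{main:regularity} then yields
\[\deg H_1(I)=\deg H_2(M/I)\leq 2+k+d-1=k+d+1,\]
which is the desired bound. The only nontrivial ingredient is \autoref{main:regularity} itself; everything else is formal bookkeeping with the two long exact sequences and the acyclicity of free FI-modules. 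There is essentially no obstacle beyond trusting the acyclicity, which is a standard property of the projective generators $M(m)$ of $\FIMod$.
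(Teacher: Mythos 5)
Your proof is correct and takes essentially the same route as the paper: the paper also splits the four-term sequence $0\to K\to V\to M\to \coker\to 0$ (through the image), uses $H_0$-acyclicity of the free module $M$ to get $\deg H_0(\coker)\leq k$ and $\deg H_1(\coker)\leq d$, and applies \autoref{main:regularity} to the cokernel to bound $\deg H_2(\coker)$ (your $H_1(I)$) by $k+d+1$, which controls $\deg H_0(K)$ via the long exact sequence. The only cosmetic difference is that you make the intermediate module $I$ and the isomorphism $H_1(I)\iso H_2(M/I)$ explicit, whereas the paper records the same information as a chain of degree inequalities.
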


\para{Uniform description of an FI-module} Our next main result is the following theorem, which gives a uniform description of an FI-module in terms of a explicit finite amount of data.
\begin{maintheorem}
\label{main:colimit}
Let $W$ be an arbitrary FI-module, and let $N\coloneq\max\big(\!\deg H_0(W),\,\deg H_1(W)\big)$. Then for any finite set $T$,
\begin{equation}
\label{eq:colimit}
W_T=\colim_{\substack{S\subset T\\\abs{S}\leq N}} W_S. 
\end{equation}
Moreover, $N$ is the smallest integer such that \eqref{eq:colimit} holds for all finite sets.
\end{maintheorem}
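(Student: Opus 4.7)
The plan is to combine a direct verification of the colimit formula for free FI-modules with the right-exactness of the colimit functor; for the minimality clause, I would exhibit a specific $T=[m]$ where the formula breaks whenever $N'<N$.

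First, for the free FI-module $M(m)$ (with $M(m)_T=\Z[\Inj([m],T)]$), each basis element $f\in \Inj([m],T)$ factors uniquely as $[m]\xrightarrow{\sim}\im(f)\inj T$ through a subset of size exactly $m$. When $m\leq N$, this makes $\im(f)$ the unique minimum of the poset of subsets $S\subset T$ with $|S|\leq N$ containing $\im(f)$, and one sees that $\colim_{S\subset T,|S|\leq N} M(m)_S$ decomposes as a direct sum indexed by $m$-element subsets $R\subset T$, with each summand equal to $\Z[\Bij([m],R)]$. Reassembling recovers $M(m)_T$, and direct sums handle any free FI-module generated in degrees $\leq N$.

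Next, for general $W$, I would build a presentation $F_1\to F_0\to W\to 0$ with $F_0$ free in degree $\leq \deg H_0(W)\leq N$ and $F_1$ free in degree $\leq \deg H_1(W)\leq N$. This is standard: lift $H_0(W)$ to a surjection $F_0\twoheadrightarrow W$, let $K_0$ be the kernel; since $F_0$ is projective, the long exact sequence of derived functors of $H_0$ gives $H_0(K_0)=H_1(W)$, so $K_0$ is generated in degree $\leq \deg H_1(W)$ and one surjects a free $F_1$ onto it. Define $\widetilde V_T:=\colim_{S\subset T,|S|\leq N} V_S$. Because the pointwise colimit in $\ZMod$ is right-exact, the sequence $\widetilde F_1\to \widetilde F_0\to \widetilde W\to 0$ is exact; comparing with the original presentation via the natural transformation $\widetilde V\to V$ and invoking that $\widetilde F_i\cong F_i$ from the free case, a diagram chase yields $\widetilde W\cong W$.

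For minimality, suppose $W_T=\colim_{|S|\leq N'}W_S$ for all $T$ with $N'<N$. If $N'<\deg H_0(W)$, pick $m>N'$ with $H_0(W)_m\neq 0$; for $T=[m]$ the image of the colimit map is $\sum_{|S|\leq N'}\iota_*(W_S)\subsetneq W_m$, by the very definition of $H_0$. Otherwise $\deg H_0(W)\leq N'<\deg H_1(W)$; pick $m>N'$ with $H_1(W)_m\neq 0$. Since $F_0$ is then generated in degree $\leq N'$, the free case gives $\widetilde F_0=F_0$, and right-exactness yields $\widetilde W_{[m]}=F_0([m])\big/\sum_{|S|\leq N'}\iota_*K_0(S)$; but $K_0([m])\big/\sum_{|S|\leq N'}\iota_*K_0(S)$ surjects onto $H_0(K_0)_m=H_1(W)_m\neq 0$, so the denominator is strictly smaller than $K_0([m])$ and $\widetilde W_{[m]}\neq W_{[m]}$, a contradiction. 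The main obstacle is verifying the free case, because the colimit is over a non-filtered poset and cannot appeal to filtered-colimit exactness; the unique factorization of each basis element through its image is what carries the day, encoding the fact that every element of a free FI-module has a canonical smallest subset supporting it.
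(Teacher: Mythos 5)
Your argument is essentially correct, but it follows a genuinely different route from the paper. The paper proves \autoref{main:colimit} homologically: it shows that $C_\bullet\to K$ is a resolution by $\FIop$-projective FI-bimodules (\autoref{prop:resolution}), identifies $W\otimes_{\FI}C_\bullet$ with the complex $\widetilde{S}_{-\bullet}W$ of \cite{CEFN}, deduces $H_i(\widetilde{S}_{-\bullet}W)\iso H^{\FI}_i(W)$, and then invokes \cite[Corollary~2.24]{CEFN}, which says that \eqref{eq:colimit} holds for all $T$ if and only if $H_0$ and $H_1$ of that complex vanish in degrees $>N$; both the formula and its sharpness drop out of that equivalence at once. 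Your proof is the direct one that the paper attributes to Gan--Li \cite{GanLiCentral}: verify \eqref{eq:colimit} for free modules generated in degrees $\le N$ by factoring each injection through its image, then present $W$ by such free modules and use right-exactness of the colimit, with an explicit degreewise analysis of the comparison map for sharpness. Your route is more self-contained (no appeal to \cite{CEFN} or to the resolution $C_\bullet$); the paper's route yields the identification of $H_i(\widetilde{S}_{-\bullet}W)$ with FI-homology, which it reuses in the congruence-subgroup application.

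That said, you should repair several local missteps, which are exactly the pitfalls flagged in \S\ref{sec:relH}. You cannot in general ``lift $H_0(W)$'' to a surjection onto $W$: there may be no map $H_0(W)\to W$ at all (e.g.\ $H_0(W)_n\iso\Z/2\Z$ while $W_n$ is free abelian); use instead the canonical surjection $M(W_{\leq \deg H_0(W)})\onto W$. Free FI-modules need not be projective; what your long exact sequence requires is only that they are $H_0$-acyclic (\autoref{lemma:freeHacyclic}). Consequently $H_0(K_0)=H_1(W)$ is false in general: one has only $0\to H_1(W)\to H_0(K_0)\to H_0(F_0)$, so $K_0$ is generated in degree $\leq\max(\deg H_0(W),\deg H_1(W))=N$ rather than $\leq\deg H_1(W)$ (this is \autoref{pr:genrel}). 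That weaker bound is all the main direction needs, and in your minimality argument you use $H_0(K_0)_m\iso H_1(W)_m$ only in degrees $m>N'\geq\deg H_0(W)$, where it does hold because $H_0(F_0)$ vanishes there---but this justification must be supplied. Finally, a general free module $M(U)$ generated in degrees $\leq N$ is not a direct sum of the modules $M(m)$; either run your image-factorization argument directly on the decomposition $M(U)_T=\bigoplus_{R\subset T}U_R$, or present $M(U)$ by direct sums of $M(m)$'s and apply right-exactness of the colimit once more.
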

We deduce \autoref{main:colimit} from \cite[Corollary 2.24]{CEFN}, by showing that the complex $\widetilde{S}_{-\ast}W$ we introduced there computes the FI-homology $H_*(W)$. An alternate proof of \autoref{main:colimit} has recently been given by Gan--Li \cite{GanLiCentral}; in contrast with our approach via FI-homology, they prove directly that an FI-module that is presented in finite degree admits a description as in~\eqref{eq:colimit}.

\para{Homology of congruence subgroups}
As an application of these theorems we have the following result on the homology of congruence subgroups, which strengthens a recent theorem of Putman~\cite{Putman}. For $L\neq 0\in \Z$, let $\Gamma_n(L)$ be the level-$L$ principal congruence subgroup \[\Gamma_n(L)\coloneq \ker\big(\GL_n(\Z)\to \GL_n(\Z/L\Z)\big).\] For $S\subset [n]$, let $\Gamma_S(L)\subset \Gamma_n(L)$ be the subgroup
\[\Gamma_S(L)\coloneq \{M\in \Gamma_n(L)\,\vert\,M_{ij}=\delta_{ij}\text{ if }i\notin S\text { or }j\notin S\}.\]
Notice that if $\abs{S}=m$, the subgroup $\Gamma_S(L)$ is isomorphic to $\Gamma_m(L)$.
\begin{maintheorem}
\label{main:congruence} 
For all $L\neq 0\in \Z$, all $n\geq 0$, and all $k\geq 0$, \[H_k(\Gamma_n(L);\Z)=\colim_{\substack{S\subset [n]\\\abs{S}<11\cdot 2^{k-2}}} H_k(\Gamma_S(L);\Z).\]
\end{maintheorem}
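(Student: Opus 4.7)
The plan is to apply Theorem~\ref{main:colimit} to the FI-module $V^{(k)}$ given on objects by $V^{(k)}_S \coloneq H_k(\Gamma_S(L);\Z)$, with FI-structure induced by the natural block inclusions $\Gamma_S(L)\inj \Gamma_T(L)$ associated to an injection $S\inj T$. Theorem~\ref{main:colimit} then reduces the conclusion of the theorem to the explicit bound
\[N_k \coloneq \max\bigl(\deg H_0(V^{(k)}),\ \deg H_1(V^{(k)})\bigr) \ <\ 11\cdot 2^{k-2},\]
so the entire task is to bound the generation and relation degrees of the FI-module $V^{(k)}$.

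I would prove this bound by induction on $k$. The base case $k=0$ is immediate, since $V^{(0)}$ is the constant FI-module $\Z$; the case $k=1$ reduces to the abelianization of $\Gamma_S(L)$ and can be analyzed directly. For the inductive step, I would use a spectral sequence of FI-modules arising from the action of $\Gamma_n(L)$ on a highly connected semi-simplicial complex carrying an FI-equivariant structure\,---\,for instance, an injective-word complex or an FI-version of Charney's complex of split partial bases. The simplicial strata contribute free FI-modules tensored with stabilizer homologies, and by standard stabilizer identifications each $p$-simplex contributes (up to a degree shift) a term built from $V^{(j)}$ with $j < k$. High connectivity forces the $E^\infty$-page to vanish in a linear range, which lets us read off bounds on $\deg H_0(V^{(k)})$ and $\deg H_1(V^{(k)})$ from the surviving columns in terms of the $N_j$ for $j<k$. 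The resulting recursion takes the form $N_k \leq 2 N_{k-1} + c$ for a small explicit constant $c$, and solving it from the base cases yields $N_k < 11\cdot 2^{k-2}$.

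The main obstacle is that, to apply Theorem~\ref{main:colimit}, I must control both $\deg H_0$ and $\deg H_1$ of $V^{(k)}$ (and not merely $\deg V^{(k)}$), which forces a delicate bookkeeping through the pages of the spectral sequence. This is precisely where Theorem~\ref{main:regularity} enters: whenever the spectral sequence needs the $p$-th syzygy of some $V^{(j)}$ beyond just its generators, regularity supplies the estimate $\deg H_p(V^{(j)}) \leq p + 2N_j$. The combined contribution $\deg H_0(V^{(j)}) + \deg H_1(V^{(j)}) = 2N_j$ that appears at every invocation of regularity is exactly what produces the factor of $2$ in the doubling recursion, and accounts for the shape of the bound $11\cdot 2^{k-2}$. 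Once the inductive estimate on $N_k$ is in hand, Theorem~\ref{main:colimit} applied to $V^{(k)}$ delivers the claimed colimit description.
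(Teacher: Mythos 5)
Your proposal follows essentially the same route as the paper's proof: apply \autoref{main:colimit} to the FI-module $\HH_k$ of congruence-subgroup homology, then bound $\deg H_0^{\FI}$ and $\deg H_1^{\FI}$ by induction on $k$ using the spectral sequence arising from the action of the congruence FI-group on a Charney-type complex of partial bases, with \autoref{main:regularity} supplying the bound $\deg H_p \leq p + \deg H_0 + \deg H_1 - 1$ on higher columns, which is exactly what produces the doubling recursion behind the constant $11\cdot 2^{k-2}$ (the paper's $2^{k-2}(2d+9)$ with $d=1$). The only cosmetic difference is that the paper treats the base cases $k=0,1$ inside the same spectral sequence (via Charney's acyclicity range, giving $\deg E^2_{0,1}\leq d+2$ and $\deg E^2_{1,1}\leq d+4$) rather than by a separate abelianization analysis, and the terms feeding the induction are the rows $q<k$ through the differentials into columns $0$ and $1$, rather than the simplicial strata themselves.
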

In fact, we prove a version of \autoref{main:congruence} for any ring satisfying one of Bass's stable range conditions; see \autoref{thm:congruencestronger} in \S\ref{sec:congruence}. 
This theorem has already been used by Calegari--Emerton \cite[\S5]{CalegariEmerton} to prove stability for the completed homology of arithmetic groups.

The conclusion of  \autoref{main:congruence} is based on the main result of Putman in \cite{Putman} on ``central stability'' for $H_k(\Gamma_n(M);\Z)$, but its formulation here is a combination of \cite[Theorem~B]{Putman} and our earlier theorem with Farb and Nagpal \cite[Theorem~1.6]{CEFN}. Our main improvement over Putman is that \autoref{main:congruence} applies to homology with integral coefficients (or any other coefficients), while \cite{Putman} only applied to coefficients in a field of characteristic $\geq 2^{k-2}\cdot 18-3$. This limitation was removed in \cite{CEFN}, but at the cost of losing any hope of an explicit stable range. The methods of the present paper maintain the applicability to arbitrary coefficients while recovering Putman's stable range.

\para{Ingredients of \autoref{main:congruence}} In light of \autoref{main:colimit}, to obtain the conclusion of \autoref{main:congruence} we must bound the degree of $H_0$ and $H_1$ for the FI-module $\HH_k$ satisfying $(\HH_k)_n=H_k(\Gamma_n(L);\Z)$. The key technical ingredients are \autoref{main:regularity} and a theorem of Charney on a congruence version of the complex of partial bases. We obtain in \autoref{pr:Gammacomplex} a spectral sequence with $E^2_{pq}=H_p(\HH_q)$. Charney's theorem tells us that this spectral sequence converges to zero in an appropriate sense, and \autoref{main:regularity} then lets us work backward to conclude that $E^2_{pq}$ vanishes outside the corresponding range, giving the desired bound on the degree of $H_0(\HH_q)$ and $H_1(\HH_q)$.

\begin{unnumberedremark}
The argument of \autoref{main:congruence} bears an interesting resemblance to that of the second author with Venkatesh and Westerland in \cite{EVW}.    In that paper, one proves a stability theorem for the cohomology of Hurwitz spaces, which cohomology carries the structure of module for a certain graded $\Q$-algebra $R$.  As in the present paper (indeed most stable cohomology theorems), the topological side of the argument requires proving that a certain complex, carrying an action of the group whose cohomology we wish to control, is approximately contractible.  The algebraic piece of \cite{EVW} involves showing that $\deg \Tor^R_i(M,\Q)$ can be bounded in terms of $\deg \Tor^R_0(M,\Q)$ and $\deg \Tor^R_1(M,\Q)$ \cite[Prop 4.10]{EVW}.  Exactly as in the proof of \autoref{main:congruence}, it is these bounds that allow us to carry out an induction in the spectral sequence arising from the quotient of the highly connected complex by the group of interest.
\end{unnumberedremark}

\para{Combinatorial structure of FI-modules}
Our last theorem is a basic structural property of FI-modules; this structural theorem provides the technical foundation for our other results, and is also of independent interest in its own right.

An FI-module $M$ is \emph{torsion-free} if for every injection $f\colon S\into T$ between finite sets, the map $f_*\colon M_S\to M_T$ is injective. 
In this case, for any subset $S\subset T$, we may regard $M_S$ as a submodule of $M_T$, by identifying it with its image under the canonical inclusion.

\begin{maintheorem}
\label{main:saturation} Let $M$ be a torsion-free FI-module generated in degree $\leq k$, and let $V\subset M$ be a sub-FI-module generated in degree $\leq d$. Then for all $n > \min(k,d)+ d$ and any $a \leq n$, 
\[V_n\cap \big(M_{[n]-\{1\}}+\cdots+M_{[n]-\{a\}}\big)=V_{[n]-\{1\}}+\cdots+V_{[n]-\{a\}}.\]
\end{maintheorem}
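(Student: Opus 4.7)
The inclusion $\supseteq$ is immediate: $V$ is torsion-free as a sub-FI-module of the torsion-free $M$, so $V_{[n]-\{j\}}\subset V_n$, and trivially $V_{[n]-\{j\}}\subset M_{[n]-\{j\}}$ for each $j$. The content is the reverse inclusion.

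Given $v\in V_n\cap\bigl(M_{[n]-\{1\}}+\cdots+M_{[n]-\{a\}}\bigr)$, the goal is to exhibit a decomposition $v=v_1+\cdots+v_a$ with $v_j\in V_{[n]-\{j\}}$. Since $V$ is generated in degree $\le d$, I would first write $v=\sum_\alpha v_\alpha$ with $v_\alpha\in V_{T_\alpha}$ for subsets $T_\alpha\subset[n]$ of size $\le d$ (identifying $V_{T_\alpha}$ with its image in $V_n$ via torsion-freeness). Call a summand \emph{good} if $T_\alpha$ misses some $j\in\{1,\dots,a\}$ (so that $v_\alpha\in V_{[n]-\{j\}}$) and \emph{bad} otherwise. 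Good summands already lie in $\sum_j V_{[n]-\{j\}}$, so the entire obstruction is concentrated in the bad summands, for which $T_\alpha\supset[a]$. Since $|T_\alpha|\le d$, there are no bad summands at all when $a>d$; this explains the role of the ``$d$'' half of $\min(k,d)+d$.

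In the nontrivial regime, I would exploit the hypothesis in earnest: writing $v=m_1+\cdots+m_a$ with $m_j\in M_{[n]-\{j\}}$ and expanding each $m_j$ using that $M$ is generated in degree $\le k$ yields a relation inside $M_n$ between the bad $V$-summands and elements of $M$ supported on subsets $S\subset[n]-\{j\}$ of size $\le k$. The plan is to use this relation to redistribute: for each bad summand $v_\alpha$, parameterized by an injection $[d]\into[n]$ whose image contains $[a]$, exchange it for a combination of good summands plus correction terms that can be reabsorbed into the remaining identity. The numerical condition $n>\min(k,d)+d$ is exactly what provides enough spare positions in $[n]\setminus T_\alpha$ --- at least $\min(k,d)+1$ of them --- to carry out all substitutions without collision between the freshly introduced indices.

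The principal obstacle is the combinatorial bookkeeping promised in the introduction. The iterated substitutions will produce a double sum indexed by certain subsets of integers in $[n]$, and the crux of the argument should be a Catalan-type identity showing that these sums telescope into $\sum_j V_{[n]-\{j\}}$ with no residual falling outside. I would organize the global argument as an induction in which a suitable complexity on the bad injections $[d]\into[n]$ (for instance, the lexicographic position of the image, or the number of elements of $[a]$ that are ``active'' in a bad support) strictly decreases under each redistribution step, with the Catalan enumeration exactly accounting for why this complexity can be driven to zero once $n$ exceeds $\min(k,d)+d$.
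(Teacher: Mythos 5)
The first half of your proposal (the easy inclusion, the good/bad split of a degree-$\le d$ decomposition of $v$, and the observation that bad supports must contain $[a]$, so the case $a>d$ is vacuous) is fine and matches remarks in the paper. But the hard direction, as you have sketched it, has a genuine gap, and it is precisely at the point where the hypothesis on $M$ must enter. Your plan is to write $v=m_1+\cdots+m_a$ with $m_j\in M_{[n]-\{j\}}$, expand each $m_j$ in generators of $M$ of degree $\le k$, and then ``redistribute'' the bad $V$-summands using the resulting relation in $M_n$. The obstruction is that such a relation mixes elements of $V$ with elements of $M$, and there is no mechanism in your outline for converting it back into a decomposition \emph{inside} $V$: nothing forces the correction terms produced by a substitution to lie in $V$ again, so the ``reabsorption'' step is exactly the unsolved problem restated. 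The paper sidesteps this entirely: the only way the generation degree of $M$ is used is through \autoref{pr:bigsat}, which says that the two-sided ideal $I_{k+1}\subset\Z[S_n]$ generated by products $J^{i_1}_{j_1}\cdots J^{i_{k+1}}_{j_{k+1}}$ (with $J^i_j=\id-(i\ \,j)$) annihilates $M_n$, hence annihilates $V_n$; after that, $M$ disappears from the argument (\autoref{thm:saturationalternate}). Your appeal to ``spare positions'' and a ``Catalan-type telescoping identity'' correctly guesses the flavor of \autoref{pr:bigb} and the lexicographic induction over the sets $\Sigma(a,b)$, but no actual identity or invariant is produced, and the concrete engine --- the lex-first characterization \eqref{eq:lexfirst} of $\Sigma(b)$ under the $(\Z/2)^S$-action, and \autoref{pr:indb} --- is absent.

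A second missing ingredient is the use of torsion-freeness of $V$ in the hard direction. In your proposal torsion-freeness only serves to identify $V_T$ with its image in $V_n$. In the paper it is essential at the final step: one applies operators $\J_S\in\Z[\Hom_{\FI}([n],[n]\disjoint[\star_b])]$ that raise the degree from $n$ to $n+b$, shows that the bad piece $v_S$ satisfies $\incThmD(v_S)=0$ in $V_{n+b}$ (via Claims 1--5 in \S\ref{section:saturationproof}), and then uses injectivity of $\incThmD\colon V_n\to V_{n+b}$ to conclude $v_S=0$. Without some analogue of this step --- a reason why a bad summand that is ``killed by all available relations'' must actually vanish rather than merely be rewritable --- the proposed decreasing-complexity induction cannot close. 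So the proposal identifies the right combinatorial atmosphere (lexicographic induction, Catalan-counted index sets, the threshold $n>\min(k,d)+d$ supplying extra slots), but it is missing the two ideas that make the proof work: the annihilating ideal $I_{K+1}$ replacing any bookkeeping with generators of $M$, and the degree-raising operators $\J_S$ combined with torsion-freeness of $V$ to eliminate the bad terms.
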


\autoref{main:saturation} holds for any $d\geq 0$ and $k\geq 0$. However in the cases of primary interest, we will have $k < d$, so in practice the threshold for \autoref{main:saturation} will be $n > k+d$.  We note also that 
\autoref{main:saturation} is trivially true for $a > d$: the inclusion $V_{[n]-\{1\}}+\cdots+V_{[n]-\{a\}} \subset V_n\cap (M_{[n]-\{1\}}+\cdots+M_{[n]-\{a\}}) $ always holds, and it is easy to show that 
$V_{[n]-\{1\}}+\cdots+V_{[n]-\{a\}} = V_n$ when $a>d$.

\para{Stating \autoref{main:saturation} without $M$} Although \autoref{main:saturation} seems to be a theorem about the relation between the FI-module $M$ and its submodule $V$, actually the key object is $V$; the role of $M$ is somewhat auxiliary. In fact, in \S\ref{section:ideal-Im} we will give a more general formulation in \autoref{thm:saturationalternate} that makes no reference to $M$; in place of the intersection $V_n\cap (M_{[n]-\{1\}}+\cdots+M_{[n]-\{a\}})$, we use the subspace of $V_n$ annihilated by the operator $\prod_{i=1}^a \big(\id -\, (i\ \,n+i)\big)\in \Z[S_{n+a}]$  (see \S\ref{section:ideal-Im} for more details). \autoref{thm:saturationalternate} is stronger than \autoref{main:saturation} and has content even in the case corresponding to $V=M$, when \autoref{main:saturation} says nothing. Aside from their application to FI-homology in this paper, these results are fundamental properties of the structure of FI-modules, and should be of interest on their own.

\para{\autoref{main:saturation} and homology} We will show that if $M$ is a free FI-module, \autoref{main:saturation} has a natural homological interpretation as a bound on the degree of vanishing of a certain derived functor applied to $M/V$; see \autoref{rem:saturationreinterpreted} and \autoref{cor:freeacyclic} for details. It is this interpretation that allows us to connect \autoref{main:saturation} with the bounds on regularity in \autoref{main:regularity}.

\para{Bounds on torsion} The conclusion of \autoref{main:saturation} can be phrased as a statement about the quotient FI-module $M/V$, and in the case $a=1$ this conclusion becomes particularly simple: it states that the map $(M/V)_{[n]-\{1\}}\to (M/V)_{[n]}$ is injective when $n>\min(k,d)+d$. This yields the following corollary of \autoref{main:saturation}. In general, an FI-module $W$ is \emph{torsion-free in degrees $\geq m$} if the maps $f_*\colon M_S\to M_T$ are injective whenever $\abs{S}\geq m$. 
\begin{maincorollary}
\label{main:torsionbound}
If $M$ is torsion-free and generated in degree $\leq k$, and its sub-FI-module $V$ is generated in degree $\leq d$, then the quotient $M/V$ is torsion-free in degrees $\geq \min(k,d)+d$.
\end{maincorollary}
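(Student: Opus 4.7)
The plan is to read off the corollary directly from the $a=1$ case of \autoref{main:saturation}. I need to check that $f_*\colon (M/V)_S\to (M/V)_T$ is injective for every injection $f\colon S\into T$ with $\abs{S}\geq \min(k,d)+d$. First I would reduce to a single one-element enlargement: any FI-morphism factors as an isomorphism onto its image followed by a subset inclusion, and any subset inclusion factors as a chain of one-element enlargements. Since isomorphisms act as isomorphisms on the values of any FI-module, and since the source throughout such a chain has size $\geq\abs{S}\geq \min(k,d)+d$, it suffices to handle a single one-element enlargement. By the transitive action of the symmetric group it then suffices to treat the standardized inclusion $[n]-\{1\}\subset [n]$ for every $n>\min(k,d)+d$.

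For this, I would unwind the definition of the quotient. Because $M$ is torsion-free, I identify $M_{[n]-\{1\}}$ with its image in $M_{[n]}$, and likewise $V_{[n]-\{1\}}$ with its image in $V_{[n]}$. An element of the kernel of $(M/V)_{[n]-\{1\}}\to (M/V)_{[n]}$ is represented by some $m\in M_{[n]-\{1\}}$ whose image in $M_{[n]}$ lies in $V_{[n]}$, that is, by an element of $V_{[n]}\cap M_{[n]-\{1\}}$. The $a=1$ case of \autoref{main:saturation} asserts exactly that this intersection equals $V_{[n]-\{1\}}$, so $m$ already lies in $V_{[n]-\{1\}}$ and its class in $(M/V)_{[n]-\{1\}}$ is zero, proving the desired injectivity.

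The corollary is thus a direct translation of the $a=1$ case of \autoref{main:saturation} into the language of torsion in $M/V$. All of the genuine difficulty sits in \autoref{main:saturation} itself; the only ``work'' in this deduction is the routine reduction from arbitrary injections to the standardized one-element inclusion $[n]-\{1\}\subset [n]$, which uses only the functoriality of $M/V$ and the torsion-freeness of $M$ (the latter being what makes the intersection $V_{[n]}\cap M_{[n]-\{1\}}$ literally meaningful as a subset of $M_{[n]}$).
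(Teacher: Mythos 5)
Your proposal is correct and is essentially the paper's own deduction: the paper obtains \autoref{main:torsionbound} by observing that the $a=1$ case of \autoref{main:saturation} says exactly that $(M/V)_{[n]-\{1\}}\to (M/V)_{[n]}$ is injective for $n>\min(k,d)+d$. Your additional reduction from arbitrary injections to standardized one-element inclusions is the same routine step the paper leaves implicit (compare the argument in \autoref{lem:torsion-freeK}), so there is no substantive difference.
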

\para{Alternate proofs of \autoref{main:regularity}}
In the time since this paper was first posted, alternate proofs of \autoref{main:regularity} have been given by Li \cite{Li} (based partly on Li-Yu \cite{LiYu}) and Gan \cite{Gan}. The structure of those proofs is different from ours. In this paper, we prove \autoref{main:saturation} in a self-contained way, and then deduce \autoref{main:regularity} as a direct consequence. By contrast, both Li and Gan use \autoref{main:torsionbound} as a stepping stone (replacing the need for the full strength of \autoref{main:saturation}); they prove both \autoref{main:regularity} and \autoref{main:torsionbound} together, using an induction on $k$ that bounces back and forth between those two results.

\para{Sharpness of \autoref{main:saturation}} 
Before moving on, we give a simple example showing that the bound of $\min(k,d)+d$ in \autoref{main:saturation} and \autoref{main:torsionbound} is sharp.

\begin{example}
Fix any $k\geq 0$ and any $d>k$. Let $M$ be the FI-module over $\Q$ such that $M_T$ is freely spanned by the $k$-element subsets of $T$. The FI-module $M$ is torsion-free and generated in degree~$k$ by $M_k\iso \Q$.

For any $d$-element set $U$, consider the element $v_U\coloneq \sum_{\substack{S\subset U\\\abs{S}=k}}e_S$. Let $V\subset M$ be the sub-FI-module such that $V_T$ is spanned by the elements $v_U\in M_T$ for all $d$-element subsets $U\subset T$. 
The FI-module $V$ is generated by $v_{[d]}\in V_d\iso \Q$, so 
\autoref{main:torsionbound} asserts that the quotient $W\coloneq M/V$ should be torsion-free in degrees $\geq k+d$.

In fact, we have $W_n\neq 0$ for $n<k+d$ and $W_n=0$ for $n\geq k+d$, which we can verify as follows.
By definition $V_n$ is spanned by the $\binom{n}{d}$ elements $v_U$ 
as $U$ ranges over the $d$-element subsets $U\subset [n]$, so the dimension of $V_n$ is at most $\binom{n}{d}$. When $n < k+d$ we have $\dim V_n\leq \binom{n}{d} < \binom{n}{k} = \dim M_n$  so $V_n \neq M_n$, verifying the first claim. On the other hand, with a bit of work one can check directly that $V_{k+d}=M_{k+d}$, which then implies $V_n = M_n$ for all $n \geq k+d$, verifying the second claim.

Since $W_n=0$ for $n\geq k+d$ we see that $W$ is torsion-free in degrees $\geq k+d$, as guaranteed by \autoref{main:torsionbound}; however, the fact that $W_{k+d-1}\neq 0$ shows that this bound cannot be improved.
\end{example}

\para{Castelnuovo--Mumford regularity}
What we prove in \autoref{main:regularity} is that
\begin{displaymath}
\deg H_p(V) \leq c_V + p
\end{displaymath}
for some constant $c_V$ depending on $V$.  By analogy with commutative algebra, this statement could be thought of as saying that the {\em Castelnuovo--Mumford regularity} of $V$ is at most $c_V$.  For FI-modules over fields of characteristic $0$, that all finitely generated FI-modules have finite Castelnuovo--Mumford regularity in this sense is a recent result of Snowden and Sam~\cite[Cor 6.3.4]{SamSnowdenGL}.

We emphasize that \autoref{main:regularity} gives an explicit description of the regularity of $V$ which depends only on the \emph{degrees} of generators and relations for $V$.  This is much stronger than the bounds for the regularity of finitely generated modules $M$ over polynomial rings $\CC[x_1, \ldots, x_r]$, which depend on the \emph{number} of generators of $M$.  We take this strong bound on regularity as support for the point of view that the category of FI-modules is in some sense akin to the category of graded modules for a univariate polynomial ring $\CC[T]$. (See \autoref{table:analogy} for more details of this analogy.) Of course, in the latter context the fact that the regularity is bounded by the degree of generators and relations is a triviality because $H_p(V) = 0$ for all $p > 1$; by contrast, the category of FI-modules has infinite global dimension.

Despite these analogies, we would like to emphasize one surprising feature of the bound
\[\deg H_p(V)-p\leq \deg H_0(V)+\deg H_1(V)-1\]
we obtain for FI-modules: one cannot expect a bound of this form to hold for graded modules over a general graded ring, for the simple reason that the bound is not invariant under shifts in grading.\footnote{Indeed, since $\deg H_i(V[k])=\deg H_i(V)-k$ for graded modules, applying this inequality to $V[k]$ rather than $V$ would shift the left side by $-k$ but the right side by $-2k$, leading to the absurd conclusion that $\deg H_p(V)-p\leq \deg H_0(V)+\deg H_1(V)-1-k$ for any $k$. This is impossible unless $\deg H_p(V) = -\infty$ for $p>1$, meaning the ring has homological dimension 1 (as we saw for for $\CC[T]$ above).}   The existence of such a bound for FI-modules reflects the fact that, although a version of the grading shift does exist for FI-modules (see \S\ref{section:shifts-derivatives}), its effect on generators and relations is considerably more complicated. In particular, this shift is not \emph{invertible} for FI-modules.

\para{Infinitely generated FI-modules} One striking feature of \autoref{main:regularity}, and another contrast with polynomial rings, is that its application is not restricted to finitely generated FI-modules: \autoref{main:regularity} bounds the regularity of \emph{any} FI-module which is presented in finite degree.  This is critical for the applications to homology of congruence subgroups in \S\ref{sec:congruence}: for congruence subgroups such as
\[\Gamma_n(t)=\ker\big(\,\GL_n(\mathbb{C}[t]) \to \GL_n(\mathbb{C})\,\big),\]
the FI-modules arising from the homology of $\Gamma_n(t)$ are not even countably generated! Nevertheless, the bounds in  \autoref{thm:congruencestronger} below apply equally well to this case.

\para{Acknowledgements}
The authors are very grateful to the University of Copenhagen for its hospitality in August 2013, where the first version of the main results of this paper were completed. We thank Jennifer Wilson for many helpful comments on an earlier draft of this paper, and Eric Ramos, Jens Reinhold, and Emily Riehl for helpful conversations. We thank Wee Liang Gan and Liping Li for conversations regarding their results in \cite{GanLiCentral} and \cite{GanLi}. We are very grateful to two anonymous referees for their careful reading and thoughtful suggestions, which improved this paper greatly, and especially for comments which led us to the statement of \autoref{thm:saturationalternate} in \S\ref{section:ideal-Im}.

\section{Summary of FI-modules}
\label{section:basics}
In this introductory section, we record the basic definitions and properties of FI-modules that we will use in this paper. Experts who are already familiar with FI-modules can likely skip \S\ref{section:basics} on a first reading (with the exception of \autoref{lem:DaVM} and \autoref{rem:saturationreinterpreted}, which are less standard and play a key role in later sections).

As we mentioned in the introduction, there is a productive analogy between FI-modules and graded $\CC[T]$-modules. For the benefit of readers unfamiliar with FI-modules, in \autoref{table:analogy} we have listed all the  constructions for FI-modules  described in this section, along with the analogous construction for $\CC[T]$-modules. These analogies are not intended as precise mathematical assertions, only as signposts to help the reader orient themself in the world of FI-modules.

\begin{table}
\centering
\begin{tabular}{rl|rl}
  $\FI$\ \ & a category    & $\CC[T]$\ \ &  \multicolumn{1}{l}{an algebra over}
  \\
$\FB\subset \FI$\ \  &its subcat.\ of isos  & $\CC\subset \CC[T]$\ \ &  \multicolumn{1}{l}{a field}
\\
\hline
\multicolumn{1}{l}{\ \ $\iB\colon \FB\into \FI$} & &\multicolumn{1}{l}{$\CC\into \CC[T]$}&
\\ &&\\
$\iB^*\colon \FIMod\to\FBMod$  & forget action of non-isos & $\CTMod\to \CMod$ &take underlying \\
&&&vector space
\\
$M\colon \FBMod\to\FIMod$  &  left adjoint of $\iB^*$ & $\CMod\to \CTMod$ &$V\mapsto \CC[T]\otimes_{\CC} V$
\\
\multicolumn{2}{r|}{$=$ ``free FI-mod on $W$''}&\multicolumn{2}{r}{$=$ free $\CC[T]$-mod on $V$}\\
\hline
\multicolumn{1}{l}{\ \ $\pi\colon \Z\FI\onto \Z\FB$} &  non-isos $\mapsto 0$ & \multicolumn{1}{l}{$\CC[T]\onto \CC$}&$T\mapsto 0$
\\ &&\\
$\pi^*\colon \FBMod\to\FIMod$  &make non-isos act by $0$ & $ \CMod\to \CTMod$ &make $T$ act by $0$ 
\\[2pt]
$\lambda\colon \FIMod\to\FBMod$  &  left adjoint of $\pi^*$ & $\CTMod\to \CMod$ &$M\mapsto M/TM$
\\
&(extend scalars by $\pi$)&&\multicolumn{1}{r}{$=M\otimes_{\CC[T]}\CC$}
\\
$H_0\colon \FIMod\to\FIMod$  &  $H_0\coloneq \pi^*\circ \lambda$ &  \multicolumn{2}{r}{$M/TM$ considered as $\CC[T]$-mod\ \ \ \ \ \ }
\\[2pt]
$\rho\colon \FIMod\to\FBMod$  &  right adjoint of $\pi^*$ & $\CTMod\to \CMod$ &$M\mapsto M[T]$
\\
&&\multicolumn{2}{r}{$=\ker(M\xrightarrow{T}M[1])$}
\\

\hline
$S\colon \FIMod\to \FIMod$&$(SW)_T\coloneq W_{T\disjoint\{\star\}}$&\multicolumn{2}{r}{grading shift $M\mapsto M[1]$\ \qquad\qquad\quad}\\
$D\colon \FIMod\to \FIMod$&$\coker(W\to SW)$&\multicolumn{2}{r}{$\coker(M\xrightarrow{T}M[1])$\qquad\qquad\quad}\\
$K\colon \FIMod\to \FIMod$&$
\ker(W\to SW)=\pi^*\circ \rho$&\multicolumn{2}{r}{$\ker(M\xrightarrow{T}M[1])$\qquad\qquad\quad}\\
\end{tabular}
\caption{Analogies between FI-modules and graded $\CC[T]$-modules}
\label{table:analogy}
  \end{table}
  
(Those readers used to the six-functors formalism may prefer to dualize the right side of \autoref{table:analogy}, thinking of $\iB\colon\FB\into \FI$ as analogous to the structure map $f\colon \Spec\CC[T]\to \Spec\CC$, so that the adjoint functors $M\leftrightarrows \iB^*$  correspond to $f^{-1}\leftrightarrows f_*$. Similarly, $\pi\colon \Z\FI\onto \Z\FB$ is analogous to the closed inclusion $\iC\colon \Spec\CC\to \Spec\CC[T]$,  and the adjunctions $\lambda\leftrightarrows \pi^*\leftrightarrows \rho$ correspond to $\iC^{-1}\leftrightarrows \iC_*=\iC_!\leftrightarrows \iC^!$.)

\subsection{Free FI-modules and generation}
\label{section:free-and-generation}
\para{FB-modules} Just as $\FI$ denotes the category of finite sets and injections, $\FB$ denotes the category of finite sets and bijections. An FB-module $W$ is an element of $\FBMod$, the abelian category of functors $W\colon \FB\to \ZMod$. An FB-module $W$ is just a sequence $W_n$ of $\Z[S_n]$-modules, with no additional structure.

\para{Free FI-modules} 
$\FB$ is the subcategory of $\FI$ consisting of all the isomorphisms (its maximal sub-groupoid). From the inclusion $\iB\colon \FB\into \FI$, we obtain a natural forgetful functor $\iB^*$ from $\FIMod$ to $\FBMod$ that simply forgets about the action of all non-isomorphisms. Its left adjoint $M\colon \FBMod\to \FIMod$ takes an FB-module $W$ to the ``free FI-module $M(W)$ on $W$''. We call any FI-module of the form $M(W)$ a \emph{free FI-module}.

We recall from \cite[Definition~2.2.2]{CEF} an explicit formula for $M(W)$, from which we can see that $M$ is exact.
\begin{equation}
\label{eq:MW}
M(W)_T=\bigoplus_{S\subset T}W_S
\end{equation}
For notational convenience, for $m\in \N$ we write $M(m)\coloneq M(\Z[S_m])$. These FI-modules have the defining property that $\Hom_{\FIMod}(M(m),V)\iso V_m$, since we can write $M(m)\iso\Z[\Hom_{\FI}([m],-)]$.

As a consequence, $M(m)$ is a projective FI-module (they are the ``principal projective'' FI-modules). In general, an FI-module is projective if and only if it is a summand of some $\bigoplus_{i\in I}M(m_i)$. 

We point out that despite the name, free FI-modules need not be projective (since non-projective $\Z[S_n]$-modules are in abundance!).\footnote{For example, consider the FI-module $V$ for which  $V_T\coloneq \Z[e_{\{i,j\}}]_{i\neq j\in T}$, the free abelian group  on the 2-element subsets of $T$. This FI-module $V$ is free on the FB-module $W$ having $W_2\iso \Z$ (the trivial $\Z[S_2]$-module) and $W_n=0$ for $n\neq 2$.
However $V$ is not projective, since $W_2$ is not a projective $\Z[S_2]$-module.} Nevertheless, for our purposes free FI-modules will be just as good as projective FI-modules (see 
\autoref{lemma:freeHacyclic} and \autoref{cor:freeacyclic}), so this discrepancy will not bother us.

\para{Generation in degree $\leq k$}
Every FI-module $V$ has a natural increasing filtration \[V_{\langle\leq 0\rangle}\subset V_{\langle\leq 1\rangle}\subset \cdots \subset V_{\langle\leq m\rangle}\subset \cdots\subset V= \bigcup_{m\geq 0} V_{\langle\leq m\rangle}\]
where $V_{\langle\leq m\rangle}$ is the sub-FI-module of $V$ ``generated by elements in degree $\leq m$''. This filtration, which  is respected by all maps of FI-modules, can be defined as follows.

Given an FI-module $V$, by a slight abuse of notation we write $M(V)$ for the free FI-module on the  FB-module $\iB^*V$ underlying $V$. 
From the adjunction $M\leftrightarrows \iB^*$ we have a canonical map $M(V)\onto V$, which is always surjective. 
We modify this slightly to define the filtration $V_{\langle \leq m\rangle}$.

\begin{definition}
Let $V_{\leq m}$ be the FB-module defined by $(V_{\leq m})_T=V_T$ if $\abs{T}\leq m$ and $(V_{\leq m})_T=0$ if $\abs{T}> m$. Then the natural inclusion of FB-modules $V_{\leq m}\into \iB^*V$ induces a  map of FI-modules $M(V_{\leq m})\to V$.

We define $V_{\langle \leq m\rangle}\subset V$ to be the image of the canonical map $M(V_{\leq m})\to V$.
Equivalently, $V_{\langle \leq m\rangle}$ is the smallest sub-FI-module $U\subset V$ satisfying $U_n=V_n$ for all $n\leq m$. 
We sometimes write $V_{\langle <m\rangle}$ as an abbreviation for $V_{\langle \leq m-1\rangle}$.
\end{definition} 

In the introduction we said that an FI-module $W$ is \emph{generated in degree $\leq m$} if $\deg H_0(W)\leq m$, but there are many equivalent ways to formulate this definition.
\pagebreak
\begin{lemma}
\label{lem:genequiv}
Let $V$ be an FI-module, and fix $m\geq 0$. The following are equivalent:
\begin{enumerate}[label=(\roman*)]
\item $V$ is generated in degree $\leq m$.
\item $\deg H_0(V)\leq m$.
\item $V=V_{\langle \leq m\rangle}$.
\item $V$ admits a surjection from $\bigoplus_{i\in I}M(m_i)$ with all $m_i\leq m$.
\item The natural map $M(W_m)\to W$ is surjective in degrees $\geq m$.
\end{enumerate}
\end{lemma}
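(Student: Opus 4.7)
The plan is to prove the five conditions equivalent by unwinding the definitions, organized as a short cycle $\text{(i)} \Leftrightarrow \text{(ii)} \Leftrightarrow \text{(iii)} \Leftrightarrow \text{(iv)}$, together with $\text{(iii)} \Rightarrow \text{(v)} \Rightarrow \text{(ii)}$. The equivalence $\text{(i)} \Leftrightarrow \text{(ii)}$ is immediate because the introduction literally \emph{defined} ``$V$ is generated in degree $\leq m$'' to mean $\deg H_0(V)\leq m$, so nothing remains there. For $\text{(ii)} \Leftrightarrow \text{(iii)}$, I would unwind both sides: on one hand, $\deg H_0(V)\leq m$ means that for every $n>m$ the space $V_n$ is spanned by images $f_*(V_S)$ with $\abs{S}<n$, and iterating this reduction one index at a time shows it is equivalent to $V_n$ being spanned by images from $V_S$ with $\abs{S}\leq m$. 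On the other hand, by the characterization of $V_{\langle\leq m\rangle}$ as the smallest sub-FI-module agreeing with $V$ up through degree $m$, the identity $V=V_{\langle\leq m\rangle}$ says exactly that $V_n$ equals this span of images from degrees $\leq m$ for every $n$.

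For $\text{(iii)}\Leftrightarrow\text{(iv)}$, I would use the adjunction $M\dashv \iB^*$. Assuming (iii), the canonical surjection $M(V_{\leq m})\onto V$ decomposes as a direct sum $\bigoplus_{k\leq m} M((V_k)^{\FB})$, where $(V_k)^{\FB}$ denotes the FB-module concentrated in degree $k$. Each $\Z[S_k]$-module $V_k$ is a quotient of a free $\Z[S_k]$-module $\bigoplus_{j\in J_k}\Z[S_k]$, and applying the exact functor $M$ produces a surjection $\bigoplus_{j\in J_k} M(k)\onto M((V_k)^{\FB})$. Concatenating over $k\leq m$ gives (iv). Conversely, any FI-module map $M(m_i)\to V$ is determined by adjunction by an element $v_i\in V_{m_i}$, and its image lies inside $V_{\langle\leq m_i\rangle}\subset V_{\langle\leq m\rangle}$; so a surjection as in (iv) forces $V\subset V_{\langle\leq m\rangle}$, hence (iii).

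For $\text{(iii)}\Rightarrow\text{(v)}$, take any $v\in V_T$ with $\abs{T}\geq m$. Surjectivity of $M(V_{\leq m})\to V$ writes $v=\sum_i f_{i*}(w_i)$ with $w_i\in V_{S_i}$, $f_i\colon S_i\inj T$, and $\abs{S_i}\leq m$. Since $\abs{T}\geq m$, each injection $f_i$ factors as $f_i=h_i\circ g_i$ with $g_i\colon S_i\inj T_i$ for some $m$-element set $T_i$ and $h_i\colon T_i\inj T$; then $f_{i*}(w_i)=h_{i*}(g_{i*}(w_i))$ with $g_{i*}(w_i)\in V_{T_i}$, exhibiting $v$ as an element of the image of $M(V_m)\to V$ in degree $\abs{T}$. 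For $\text{(v)}\Rightarrow\text{(ii)}$, observe that if $n>m$ then every element of $V_n$ is a sum of pushforwards from $V_m$ along injections $[m]\inj[n]$, hence lies in the span of images from sets of size strictly less than $n$; so $H_0(V)_n=0$ for all $n>m$, which is precisely $\deg H_0(V)\leq m$.

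There is no real obstacle here—everything is a definitional unwinding. The only step that requires a moment's thought is $\text{(iii)}\Rightarrow\text{(v)}$, where one must notice that even though $V_{\langle\leq m\rangle}$ is a priori generated by elements in all degrees up to $m$, the pushforward of any such element into degree $\geq m$ can always be rerouted through $V_m$ simply because there is enough room in the target set.
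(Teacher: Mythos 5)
Your proof is correct, and in fact the paper gives no proof of \autoref{lem:genequiv} at all—it is stated as a standard definitional unwinding, which is exactly what you supply: the counit surjection onto $V_{\langle\le m\rangle}$ coming from the explicit formula $M(W)_T=\bigoplus_{S\subset T}W_S$, the adjunction $\Hom(M(m_i),V)\iso V_{m_i}$, and the iterated degree-lowering argument all check out, including the rerouting-through-an-$m$-element-subset step for (iii)$\Rightarrow$(v). (You also correctly read the $W$ in item (v) as a typo for $V$.)
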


\para{The functor $H_0$}
In the other direction, we do not quite have a projection from $\FI$ to $\FB$, because the non-invertible morphisms in $\FI$ have nowhere to go.  One wants to map them to zero, but there's no ``zero morphism" in $\FB$.  This problem can be solved by passing to the $\Z$-enriched versions $\Z\FI$ and $\Z\FB$ of these categories:  we now have a functor $\pi\colon\Z\FI\to\Z\FB$ which sends all non-invertible morphisms to zero and is the identity on isomorphisms. 

This induces the ``extension by zero'' functor $\pi^*\colon \FBMod\to \FIMod$ which takes an FB-module $W$ and simply regards it as an FI-module by defining $f_*=0$ for all non-invertible $f\colon S\to T$. This functor is exact and has both a left adjoint $\lambda$ and right adjoint $\rho$.

In this section, we consider the left adjoint $\lambda\colon \FIMod\to \FBMod$. Since every noninvertible map $f\colon S\into T$ increases cardinality, we have the formula $(\lambda V)_n=(V/V_{\langle <n\rangle})_n$. This is almost exactly the definition of $H_0\colon \FIMod\to \FIMod$ given in the introduction; the only difference is that $\lambda V$ is an FB-module whereas $H_0(V)$ is the same thing regarded as an FI-module, i.e.\  $H_0=\pi^*\circ \lambda$.

We adopt the convention in this paper that if $F$ is a right-exact functor, $H_p^F$ denotes its $p$-th left-derived functor. As we explained in the introduction, we write $H_p(V)$ for the derived functors $H_p^{H_0}(V)$ of $H_0$, and call these the \emph{FI-homology} of $V$.

\begin{lemma}
\label{lemma:freeHacyclic}
Free FI-modules are $H_0$-acyclic.
\end{lemma}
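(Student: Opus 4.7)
The plan is to compute $H_p(M(W))$ by transporting an FB-projective resolution of $W$ across the free FI-module functor $M$. The two crucial properties I would verify are (a) $M \colon \FBMod \to \FIMod$ is exact, and (b) $M$ carries projective FB-modules to projective FI-modules. Property (a) is immediate from the formula $M(W)_T = \bigoplus_{S \subset T} W_S$, which realizes $M$ as a direct sum of the (evidently exact) restriction functors $W \mapsto W_S$. Property (b) follows from the fact that $M$ is the left adjoint of $\iB^*$, and $\iB^*$ is exact (exactness in both categories is tested pointwise on finite sets).

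Next I would identify the composite $H_0 \circ M$. Using the two adjunctions $M \dashv \iB^*$ and $\lambda \dashv \pi^*$, the composite $\lambda \circ M$ is left adjoint to $\iB^* \circ \pi^*$. But $\pi^*$ extends an FB-module by zero on all non-isomorphisms and $\iB^*$ then forgets those same actions, so $\iB^* \circ \pi^*$ is the identity on $\FBMod$. By uniqueness of adjoints $\lambda \circ M \iso \id$, and hence $H_0(M(W)) = \pi^*(\lambda(M(W))) = \pi^*(W)$. One can also see this directly from the formula for $M$: the subfunctor $M(W)_{\langle < n\rangle}$ equals $\bigoplus_{S \subsetneq [n]} W_S$ in degree $n$, so the quotient in degree $n$ is precisely $W_n$.

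With these two ingredients in hand, I would choose any projective resolution $P_\bullet \to W$ in $\FBMod$ (such resolutions exist, since $\FBMod$ decomposes as $\prod_n \Z[S_n]\dMod$ and each factor has enough projectives). By (a) and (b), $M(P_\bullet) \to M(W)$ is a projective resolution in $\FIMod$, so by definition $H_p(M(W))$ is the $p$-th homology of the complex $H_0(M(P_\bullet)) = \pi^*(P_\bullet)$. Since $\pi^*$ is exact, this equals $\pi^*$ applied to the homology of $P_\bullet$, which vanishes for all $p > 0$.

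The only subtle point, and the one place where care is needed, is that free FI-modules need not themselves be projective (as pointed out in the footnote just before the lemma). So one cannot simply argue $H_p(M(W)) = 0$ from projectivity of $M(W)$; instead, one must resolve $W$ inside the FB-world, where $\Z[S_n]$-modules of course need not be projective either but projective resolutions exist, and then transport the resolution across $M$. Once this strategy is set up, the verification is formal.
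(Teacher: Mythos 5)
Your proposal is correct and is essentially the paper's own argument: the paper likewise identifies $H_0\circ M\iso\pi^*$ via the adjunction uniqueness argument from $\iB^*\circ\pi^*=\id$, and deduces $H_p(M(W))\iso H_p^{H_0\circ M}(W)=H_p^{\pi^*}(W)=0$ for $p>0$ from the facts that $M$ is exact and carries projectives to projectives. Your explicit transport of an FB-projective resolution across $M$ is precisely the proof of that composition isomorphism, so the two arguments coincide in substance.
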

\begin{proof}
Our goal is to prove that $H_p(M(W))=0$ for $p>0$. Since $M$ is exact and takes projectives to projectives, there is an isomorphism $H_p(M(W))\iso H_p^{H_0\circ M}(W)$.

However, the composition $H_0\circ M$ is just the exact functor $\pi^*$. Indeed, the composition $\pi\circ \iB$ is the identity,
so $\iB^*\circ \pi^*=\id$. It follows that its left adjoint $\lambda \circ M$ is the identity as well. Since $H_0=\pi^*\circ \lambda$, we have $H_0\circ M=\pi^*\circ \lambda\circ M= \pi^*$ as claimed. We conclude that $H_p(M(W))\iso H_p^{H_0\circ M}(W)=H_p^{\pi^*}(W)$, which vanishes for $p>0$ since $\pi^*$ is exact.
\end{proof}

We can now explain how \autoref{maincor:kernel} follows from \autoref{main:regularity}.
\begin{proof}[Proof of \autoref{maincor:kernel}]
If $M=0$ the corollary is trivial, so assume that $M\neq 0$. Let $K=\ker(V\to M)$ and $W=\coker(V\to M)$. Thanks to the equivalences in \autoref{lem:genequiv}, the statement of the corollary is that $\deg H_0(K)\leq \deg H_0(M)+\deg H_0(V)+1$.

From the exact sequence $0\to K\to V\to M\to W\to 0$ we obtain the inequalities
\[
\begin{aligned}\deg H_0(W)&\leq \phantom{\max\big(\,\deg H_0(V),\ \ }\deg H_0(M)\\
\deg H_1(W)&\leq \max\big(\,\deg H_0(V),\ \ \deg H_1(M)\phantom{,\ \ \deg H_2(W)}\,\big)\\
\deg H_0(K)&\leq \max\big(\,\deg H_0(V),\ \ \deg H_1(M),\ \ \deg H_2(W)\,\big)\\
\end{aligned}
\]
Therefore to prove the corollary, it suffices to show that the degrees of $H_0(V)$, $H_1(M)$, and $H_2(W)$ are bounded by $\deg H_0(M)+\deg H_0(V)+1$. For $H_0(V)$ this is trivial, since $\deg H_0(M)\geq 0$. Since $M$ is free, $H_1(M)=0$ by \autoref{lemma:freeHacyclic}, so $\deg H_1(M)=-\infty$; this also shows $\deg H_1(W)\leq \deg H_0(V)$. Finally, applying \autoref{main:regularity} to $W$ shows that
\[\deg H_2(W)\leq \deg H_0(W)+\deg H_1(W)+1\leq \deg H_0(M)+\deg H_0(V)+1,\] as desired.
\end{proof}

\subsection{Shifts and derivatives of FI-modules}
\label{section:shifts-derivatives}
\para{The shift functor $S$}
Fix a one-element set $\{\star\}$. Let $\disjoint\colon \Sets\times \Sets\to \Sets$ be the coproduct, i.e.\ the disjoint union of sets. This must be formalized in some fixed functorial way such as $S\disjoint T\coloneq (S\times \{0\})\cup (T\times \{1\})$; but since the coproduct is unique up to canonical isomorphism, the choice of formalization is irrelevant.

The disjoint union with $\{\star\}$ defines a functor $\sigma\colon \FI\to \FI$ by $T\mapsto T\disjoint \{\star\}$. The \emph{shift functor} $S\colon \FIMod\to \FIMod$ is given by precomposition with $\sigma$: the FI-module $SV$ is the composition $SV\colon \FI\xrightarrow{\sigma}\FI\xrightarrow{V} \ZMod$. Concretely, for any finite set $T$ we have $(SV)_T=V_{T\disjoint \{\star\}}$. The functor $S$ is evidently exact.

\para{The kernel functor $K$ and derivative functor $D$}
The inclusion of $S$ into $S\disjoint \{\star\}$ defines a natural transformation from $\id_{\FI}$ to $\sigma$. From this we obtain a natural transformation $\iSV$ from $\id_{\FIMod}$ to $S$. Concretely, this is a natural map of FI-modules $\iSV\colon V\to SV$ which, for every finite set $T$, sends $V_T$ to $(SV)_T=V_{T\disjoint \{\star\}}$ via the map correspoding to the inclusion $\iset_T$ of $T$ into $T\disjoint \{\star\}$.

The functor $D\colon \FIMod\to \FIMod$, the \emph{derivative}, is defined to be the cokernel of this map:
\[DV\coloneq \coker(V\xrightarrow{\iSV} SV).\]
We similarly define $K\colon \FIMod\to \FIMod$ to be the kernel: $KV\coloneq \ker(V\xrightarrow{\iSV} SV)$.
For any FI-module, we have a natural exact sequence
\[0\to KV\to V\to SV\to DV\to 0.\]
Since $\id$ and $S$ are exact functors, $D$ is right-exact and $K$ is left exact. Concretely, we have \[(DV)_T\iso V_{T\disjoint \{\star\}}/\im(V_T\to V_{T\disjoint \{\star\}})\qquad\text{and}\qquad
(KV)_T=\{v\in V_T\,|\,\iset(v)=0\in V_{T\disjoint\{\star\}}\}.\]
From this formula for $KV$, one can check that the functor $K$ essentially coincides with the right adjoint $\rho\colon \FIMod\to \FBMod$ of $\pi^*$; as we saw with $H_0$, the only difference is that $KV$ is $\rho V$ considered as an FI-module, i.e.\ $K=\pi^*\circ \rho$.
\begin{numberedremark}
Readers paying attention to the analogies in \autoref{table:analogy} might object that although $H_0$ and $D$ are very different functors of FI-modules, the table indicates that both correspond to the functor $M\mapsto M/TM$ of graded $\CC[T]$-modules. But this is not quite right, and by being careful with gradings we can see the distinction: $H_0$ corresponds to 
$\coker(M[-1]\xrightarrow{T} M)$ whereas $D$ corresponds to 
$\coker(M\xrightarrow{T} M[1])$. In the graded case we rarely need to worry about the distinction, since grading shifts are invertible. But for FI-modules this is not true, and the distinction is important. (\autoref{lemma:DMMS} below may clarify the behavior of $D$.)
\end{numberedremark}
\pagebreak

\begin{lemma}
\label{lem:torsion-freeK}
An FI-module $V$ is torsion-free if and only if $KV=0$.
\end{lemma}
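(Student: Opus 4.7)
The plan splits the biconditional into two directions. The forward direction is immediate from the definitions: the structure map $\iSV_T\colon V_T\to V_{T\disjoint\{\star\}}$ is induced by the inclusion $\iset_T\colon T\into T\disjoint\{\star\}$, so if $V$ is torsion-free this map is injective and $(KV)_T=\ker(\iSV_T)=0$ for every finite set $T$, giving $KV=0$.

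For the converse, I assume $KV=0$ and must verify injectivity of $f_*\colon V_S\to V_T$ for an arbitrary injection $f\colon S\into T$. The strategy is to factor $f$ into elementary pieces that can be controlled by the hypothesis: first as a bijection $S\xrightarrow{\sim}f(S)$, then as a chain of one-element inclusions $f(S)\subset f(S)\cup\{t_1\}\subset\cdots\subset T$ obtained by enumerating $T\setminus f(S)$. Bijections induce isomorphisms on $V$ by functoriality of $V\colon\FI\to\ZMod$, handling the first factor. For each one-element inclusion $U\into U\cup\{u\}$, I would compare it with the canonical inclusion $\iset_U\colon U\into U\disjoint\{\star\}$ via the bijection $U\disjoint\{\star\}\xrightarrow{\sim}U\cup\{u\}$ fixing $U$ and sending $\star\mapsto u$; functoriality in $\FI$ yields a commutative square (with the isomorphism on the right) showing $V_U\to V_{U\cup\{u\}}$ is injective iff $\iSV_U$ is, and the latter holds because $(KV)_U=0$ by hypothesis. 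Composing the finitely many injective maps in the factorization gives $f_*$ injective, so $V$ is torsion-free.

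I do not anticipate any substantive obstacle: the argument is formal once the factorization of $f$ is in hand. The only point worth spelling out carefully is that the definition of $KV$ a priori sees only a single ``universal'' one-element extension $T\disjoint\{\star\}$ for each $T$, which looks weaker than the full torsion-free condition on arbitrary injections. Functoriality of $V$ on isomorphisms in $\FI$ is exactly what closes this gap, by showing that any one-element extension is equivalent (up to an isomorphism on the target) to the universal one.
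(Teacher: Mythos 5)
Your proposal is correct and follows essentially the same route as the paper: reduce an arbitrary injection to a composition of one-element extensions (and a bijection), then use functoriality on isomorphisms to identify each one-element extension with the universal map $\iota_U\colon V_U\to V_{U\sqcup\{\star\}}$, whose injectivity is exactly the vanishing of $KV$. The paper compresses the reduction into a "simple induction" and factors each one-step inclusion as $\iota_S$ followed by a bijection, but the content is the same as your argument.
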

\begin{proof}
Recall that an FI-module $V$ is \emph{torsion-free} if for any injection $f\colon S\into T$ of finite sets, the map $f_*\colon V_S\to V_T$ is injective. By a simple induction, this holds if and only if $f_*$ is injective for all $f\colon S\into T$ with $\abs{T}=\abs{S}+1$. However such an inclusion can be factored as $f=g\circ \iset_S$ for some bijection $g\colon S\disjoint\{\star\}\iso T$. Since $g_*$ is necessarily injective, we see that $V$ is torsion-free if and only if $\iSV_S=(\iset_S)_*\colon V_S\to V_{S\disjoint\{\star\}}$ is injective for all finite sets $S$, i.e.\ if $KV=0$.
\end{proof}

\para{Iterates of shift and derivative} We can iterate the shift functor $S$, obtaining FI-modules $S^bV$ for any $b\geq 0$. To avoid the notational confusion of writing $(S^2V)_T\iso V_{T\disjoint\{\star\}\disjoint\{\star\}}$, we adopt the notation that $[\star_b]$ denotes a fixed $b$-element set $[\star_b]\coloneq \{\star_1,\ldots,\star_b\}$. We can then naturally identify $(S^2V)_T\iso V_{T\disjoint [\star_2]}$, and so on.

The iterates $D^a$ are also right-exact and can be described quite explicitly.  For every FI-module V and every finite set $T$, we have
\begin{equation}
\label{eq:dkdisc}
(D^a V)_T \iso \frac{V_{T \disjoint [\star_a]}}{\sum_{j=1}^a \im(V_{T \disjoint [\star_a]-\{\star_j\}})}
\end{equation}
where $\im(V_{T \disjoint [\star_a]-\{\star_j\}})$ denotes the image of the natural map $V_{T \disjoint [\star_a]-\{\star_j\}}\to V_{T\disjoint[\star_a]}$ induced by the inclusion $T \disjoint [\star_a]-\{\star_j\}\subset T \disjoint [\star_a]$.
We remark that $D^a$ is the left adjoint of the functor $B^a$ of \cite[Definition~2.16]{CEFN}.

For any submodule $V\subset M$ the inclusion induces a map $D^a V\to D^a M$.
\begin{lemma}
\label{lem:DaVM}
If $M$ is a torsion-free FI-module and $V\subset M$ a submodule, 
\[\ker(D^a V\to D^a M)_{n-a}=0\ \ \iff\ \  V_n\cap \big(M_{[n]-\{1\}}+\cdots+M_{[n]-\{a\}}\big)=V_{[n]-\{1\}}+\cdots+V_{[n]-\{a\}}.\]
\end{lemma}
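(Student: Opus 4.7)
The proof is essentially an exercise in unwinding the explicit formula \eqref{eq:dkdisc} for $D^a$ in the particular degree in question, and comparing it to the set-theoretic intersection in the right-hand side.

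The plan is to choose a concrete $(n-a)$-element set $T$ and a concrete identification of $T\disjoint [\star_a]$ with $[n]$. Specifically, I take $T\coloneq \{a{+}1,\ldots,n\}$ and identify $\star_j$ with $j$ for $1\leq j\leq a$, so that $T\disjoint [\star_a]\iso [n]$ and $T\disjoint[\star_a]-\{\star_j\}\iso [n]-\{j\}$. Under this identification, applying \eqref{eq:dkdisc} to $M$ gives
\[(D^a M)_{n-a}\iso M_n\big/\bigl(M_{[n]-\{1\}}+\cdots+M_{[n]-\{a\}}\bigr),\]
where we use $M$ torsion-free to replace each image $\im(M_{[n]-\{j\}}\to M_n)$ with $M_{[n]-\{j\}}$ itself (viewed as a submodule of $M_n$). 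The analogous formula for $V$ also holds because $V\subset M$ is torsion-free as well (a submodule of a torsion-free module), so each $V_{[n]-\{j\}}$ embeds into $V_n$.

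Next I would observe that the inclusion $V\hookrightarrow M$ induces the map $D^aV\to D^aM$, which in degree $n-a$ is simply the map of quotients
\[V_n\big/\bigl(V_{[n]-\{1\}}+\cdots+V_{[n]-\{a\}}\bigr)\ \longrightarrow\ M_n\big/\bigl(M_{[n]-\{1\}}+\cdots+M_{[n]-\{a\}}\bigr)\]
induced by the inclusion $V_n\hookrightarrow M_n$. The kernel of this map is manifestly
\[\bigl(V_n\cap (M_{[n]-\{1\}}+\cdots+M_{[n]-\{a\}})\bigr)\big/\bigl(V_{[n]-\{1\}}+\cdots+V_{[n]-\{a\}}\bigr),\]
so it vanishes if and only if the two displayed subspaces of $V_n$ coincide, which is the stated equivalence.

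The only genuinely non-formal point to check is that the identifications above are valid, i.e.\ that the natural maps $V_{[n]-\{j\}}\to V_n$ and $M_{[n]-\{j\}}\to M_n$ are injective so that the "$\im$" in \eqref{eq:dkdisc} can be dropped; this is exactly where I use that $M$ is torsion-free (directly) and that $V$ is torsion-free (as a submodule of $M$). Beyond this, the proof is purely a matter of bookkeeping, so I do not expect any significant obstacle.
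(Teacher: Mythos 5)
Your proposal is correct and follows essentially the same route as the paper: both unwind the explicit formula \eqref{eq:dkdisc}, use torsion-freeness of $M$ (hence of its submodule $V$) to identify the images with the submodules themselves, identify the map $(D^aV)_{n-a}\to(D^aM)_{n-a}$ as the induced map of quotients, and conclude by setting $T=\{a+1,\dots,n\}$ with $\star_j$ identified with $j$. No gaps.
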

\begin{proof} Since $M$ is torsion-free we can identify $M_S$ with its image in $M_{S\disjoint \{\star\}}$ and so on, so \[\ker(D^a V\to D^a M)_T\iso\ker\left(\frac{V_{T \disjoint [\star_a]}}{\sum_{j=1}^a V_{T \disjoint [\star_a]-\{\star_j\}}}\to  \frac{M_{T \disjoint [\star_a]}}{\sum_{j=1}^a M_{T \disjoint [\star_a]-\{\star_j\}}}\right).\]
In other words, 
\[\ker(D^a V\to D^a M)_T=0\qquad\iff\qquad V_{T\disjoint [\star_a]}\cap\left(\sum_{j=1}^a M_{T \disjoint [\star_a]-\{\star_j\}}\right)=\sum_{j=1}^a V_{T \disjoint [\star_a]-\{\star_j\}}.\]
Setting $T=\{a+1,\ldots,n\}$ and identifying $[\star_a]$ with $\{1,\ldots,a\}$, we obtain the desired expression.
\end{proof}
\begin{numberedremark}
\label{rem:saturationreinterpreted}
Notice that the right side of \autoref{lem:DaVM} is precisely the conclusion of \autoref{main:saturation}.
Therefore we can restate \autoref{main:saturation} as saying: if $M$ is torsion-free and generated in degree $\leq k$ and $V\subset M$ is generated in degree $\leq d$, then $\ker(D^a V\to D^a M)_{n-a}$ vanishes for $n>d+\min(k,d)$; in other words, $\deg \ker(D^a V\to D^a M)\leq d+\min(k,d)-a$. This observation will be used in Section~\ref{section:bounds-and-proof-of-thm-A}, and specifically in the proof of \autoref{thm:Dkhomology} to obtain bounds on $H_p^{D^a}$.
\end{numberedremark}

\section{Combinatorics of finite injections and FI-modules}
\label{sec:combinatorics}

The goal of this section is to prove \autoref{main:saturation}. In \S\ref{section:ideal-Im} we generalize \autoref{main:saturation} to \autoref{thm:saturationalternate} which does not refer to the ambient FI-module $M$, and is of independent interest. In \S\ref{section:combinatorics} we establish the combinatorial properties of $\Z[\Hom_{\FI}([d],[n])]$ that make our proof possible; throughout that section we do not mention FI-modules at all. In \S\ref{section:saturationproof} we apply these properties to prove \autoref{thm:saturationalternate}. But before moving to the combinatorics, we begin by motivating the connections with FI-modules.

\subsection{The ideal \texorpdfstring{$I_m$ and \autoref{thm:saturationalternate}}{I-m and Theorem E'}}
\label{section:ideal-Im}
\para{The ideal $I_m$}
For each pair of distinct elements $i\neq j$ in $[n]$, we write $(i\ \,j)$ for the transposition in $S_n$ interchanging $i$ and $j$, and we define $J^i_j \coloneq  \id - (i\ \,j)\in \Z[S_n]$. Note that $J^i_j=J^j_i$, and that $J^i_j$ and $J^k_l$ commute when their four indices are distinct (since the transpositions $(i\ \,j)$ and $(k\ \,l)$ commute in this case). 

For  $m\in \N$, define $I_m\subset \Z[S_n]$ to be the two-sided ideal generated by products of the form
\begin{displaymath}
J^{i_1}_{j_1}J^{i_2}_{j_2} \ldots J^{i_m}_{j_m}
\end{displaymath}
where $i_1, j_1, \ldots, i_m, j_m$ are $2m$ \emph{distinct} elements of $[n]$.  (In particular, the terms of the product commute.) Multiplying out such a product, we have 
\begin{equation}
\label{eq:JSexpansion}
J^{i_1}_{j_1}J^{i_2}_{j_2} \ldots J^{i_m}_{j_m}=
\sum_{K\subset [m]} (-1)^{\abs{K}} \prod_{k\in K}(i_k\ \,j_k)=\sum_{\sigma\in (\Z/2)^m} (-1)^\sigma \sigma,
\end{equation}
where $(\Z/2)^m$ denotes the subgroup generated by the commuting transpositions $(i_k\ \,j_k)$, and $(-1)^\sigma$ denotes the image of $\sigma$ under the sign homomorphism $S_n \ra \pm 1$.

Although the ideals $I_m$ will play multiple different roles in the proof of \autoref{main:saturation}, the following property provides a simple illustration of why we consider these ideals. 
Recall that the group ring $\Z[S_n]$ acts on $W_n$ for any FI-module $W$.  
\begin{prop}
\label{pr:bigsat}
Let $M$ be an FI-module generated in degree $\leq k$. Then $I_{k+1}\cdot M_n = 0$ for all $n\geq 0$.
\end{prop}
\begin{proof}
We prove first that $I_m$ annihilates the free module $M(a)$  if $a<m$, meaning that $I_m\cdot M(a)_n=0$ for all $n$.
For any $a$, a basis for $M(a)_n$ is given by injections $f\colon [a]\into [n]$. The key observation is that given $f\colon [a]\into [n]$ and a generator $J^{i_1}_{j_1}J^{i_2}_{j_2} \ldots J^{i_{m}}_{j_{m}} \in I_{m}$, 
\begin{equation}
\label{eq:bigsatlemma}
\text{ if $\im f\cap \{i_\ell,j_\ell\}=\emptyset$ for some $\ell\in [m]$,\qquad then $J^{i_1}_{j_1}J^{i_2}_{j_2} \ldots J^{i_m}_{j_m}\cdot f=0$.}
\end{equation}
Indeed the assumption implies $(i_\ell\ \,j_\ell)\circ f=f$, so $J^{i_\ell}_{j_\ell}=\id - (i_\ell\ \,j_\ell)$ satisfies $J^{i_\ell}_{j_\ell}\cdot f=0$. Since the terms of the product commute, it follows that $J^{i_1}_{j_1}J^{i_2}_{j_2} \ldots J^{i_m}_{j_m}\cdot f=0$.

However when $a<m$, for \emph{every} $f\colon [a]\into [m]$ and $J=J^{i_1}_{j_1}J^{i_2}_{j_2} \ldots J^{i_{m}}_{j_{m}} \in I_{m}$ there exists some $\ell$ for which $\im f\cap \{i_\ell,j_\ell\}=\emptyset$. Therefore $J\cdot f=0$ for all basis elements $f\in M(a)_n$ and all generators $J\in I_m$, proving that $I_m\cdot M(a)=0$ as claimed.

Returning to the general claim, let $M$ be an FI-module generated in degree $\leq k$.
By \autoref{lem:genequiv}(iv), $M$ is a quotient of a sum of free modules $M(a)$ generated in degrees $a\leq k$. We have just proved that $I_{k+1}$ annihilates any such free module, so it annihilates the quotient $M$ as well. 
\end{proof}

\para{Generalizing \autoref{main:saturation} by removing $M$}
The statement of \autoref{main:saturation} can be generalized by removing $M$ from its statement. Recall that \autoref{main:saturation} states that if $M$ is a torsion-free FI-module and $V\subset M$ is a submodule, then\begin{equation}
\label{eq:thmE}
V_n\cap \big(M_{[n]-\{1\}}+\cdots+M_{[n]-\{a\}}\big)=V_{[n]-\{1\}}+\cdots+V_{[n]-\{a\}}
\end{equation} for sufficiently large $n$. Though it may not be obvious, the central object in this statement is $V$. In fact, we can remove the FI-module $M$ from the statement entirely, and at the same time strengthen the theorem.

Consider the case $a=1$, when our goal \eqref{eq:thmE} is that $V_n\cap M_{[n]-\{1\}}$ coincides with $V_{[n]-\{1\}}$ for large enough $n$. When $M$ is free, the submodule $M_{[n]-\{1\}}\subset M_{[n]}$ can be cut out as \[M_{[n]-\{1\}}=\big\{m\in M_{[n]}\subset M_{[n]\disjoint [\star_1]}\,\big\vert\, (1\ \,\star_1)\cdot m = m\,\big\}.\]

In other words, recalling that $\incThmD\colon [n]\into [n]\disjoint [\star_1]$ denotes the standard inclusion, the element $\J_{[1]}\coloneq J_1^{\star_1}\circ \incThmD\in \Z[\Hom_{\FI}([n],[n]\disjoint [\star_1])]$ has the property that \[M_{[n]-\{1\}}=\ker \J_{[1]}|_{M_{[n]}}\] when $M$ is free. 
For general $M$ we need \emph{not} have equality here, but we do always have the containment $M_{[n]-\{1\}}\subset \ker \J_{[1]}|_{M_{[n]}}$. Intersecting with $V$, we always have the containments
\[V_{[n]-\{1\}}\ \subset\ 
V_n\cap M_{[n]-\{1\}}\ \subset\ 
\ker \J_{[1]}|_{V_{[n]}}.\] 
The statement of \autoref{main:saturation} is that the \emph{first} containment is an equality for large enough $n$. But we can actually prove the stronger statement that \emph{both} are equalities: $V_{[n]-\{1\}}=\ker \J_{[1]}|_{V_{[n]}}$ for large enough $n$. Notice that this statement no longer makes reference to $M$! 

For larger $a$, we consider the element $\J_{[a]}\coloneq J_1^{\star_1}\cdots J_a^{\star_a}\circ \incThmD\in \Z[\Hom_{\FI}([n],[n]\disjoint [\star_a])]$. We saw in the previous paragraph that $M_{[n]-\{i\}}\subset \ker J_i^{\star_i}$ (identifying $M_{[n]-\{i\}}$ with its image). Since all the operators $J_i^{\star_i}$ commute, this implies that $M_{[n]-\{i\}}\subset \ker \J_{[a]}$ for any $i\in [a]$, so
\[M_{[n]-\{1\}}+\cdots+M_{[n]-\{a\}}\subset \ker \J_{[a]}|_{M_{[n]}}.\]
This means that
 \begin{equation}
 \label{eq:VMJ}
 V_{[n]-\{1\}}+\cdots+V_{[n]-\{a\}}
\ \subset\ 
V_n\cap \big(M_{[n]-\{1\}}+\cdots+M_{[n]-\{a\}}\big)
 \ \subset\ 
 \ker \J_{[a]}|_{V_{[n]}}
 \end{equation} for any $n\geq a$. 
This leads us to the following generalization of \autoref{main:saturation}.

\begin{theorem-prime}{main:saturation}
\label{thm:saturationalternate}
Let $V$ be a torsion-free FI-module generated in degree $\leq d$ satisfying $I_{K+1}\cdot V=0$. Then for all $n> K+d$ and any $a\leq n$,
\[V_{[n]-\{1\}}+\cdots+V_{[n]-\{a\}}=\ker \J_{[a]}.\]
\end{theorem-prime}

\autoref{thm:saturationalternate} is proved in  \S\ref{section:saturationproof} below, but we first verify here that it implies \autoref{main:saturation}.

\begin{proof}[Proof that \autoref{thm:saturationalternate} implies \autoref{main:saturation}]
We begin in the setup of \autoref{main:saturation}, so let $M$ be a torsion-free FI-module generated in degree $\leq k$, and let $V$ be a submodule of $M$ generated in degree $\leq d$.

\autoref{pr:bigsat} states that $I_{k+1}\cdot M=0$, so the same is true of its submodule $V$. Applying \autoref{pr:bigsat} to $V$ directly shows that $I_{d+1}\cdot V=0$. Therefore if we set  $K=\min(k,d)$, we have $I_{K+1}\cdot V=0$.

Applying \autoref{thm:saturationalternate}, we conclude that $V_{[n]-\{1\}}+\cdots+V_{[n]-\{a\}}=
 \ker \J_{[a]}|_{V_{[n]}}$ for all $n>K+d$. In light of \eqref{eq:VMJ}, this implies that $V_{[n]-\{1\}}+\cdots+V_{[n]-\{a\}}=
V_n\cap \big(M_{[n]-\{1\}}+\cdots+M_{[n]-\{a\}}\big)$ for $n>K+d$, as desired.
\end{proof}

In fact, \autoref{thm:saturationalternate} is strictly stronger than \autoref{main:saturation}. To see this, notice that \autoref{main:saturation} says nothing when $V=M$, while \autoref{thm:saturationalternate} implies the following  structural statement (by taking $a=K+1$ and noting that $\J_{[K+1]}\in I_{K+1}$), which is nontrivial whenever $K<d$.

\begin{corollary}
\label{cor:VKK}
Let $V$ be a torsion-free FI-module generated in degree $\leq d$ satisfying $I_{K+1}\cdot V=0$. (For example, this holds if $V$ can be embedded into some FI-module generated in degree $\leq K$.) Then for all $n> K+d$,
\[V_n = V_{[n]-\{1\}}+\cdots+V_{[n]-\{K\}}+V_{[n]-\{K+1\}}.\]
\end{corollary}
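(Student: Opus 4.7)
The plan is to deduce \autoref{cor:VKK} as a direct application of \autoref{thm:saturationalternate} with $a = K+1$. First I would check that the hypotheses of the theorem are available: we are given that $V$ is torsion-free, generated in degree $\leq d$, and annihilated by $I_{K+1}$; and since $d \geq 0$, the assumption $n > K+d$ already yields $n \geq K+1 = a$. Applying \autoref{thm:saturationalternate} then gives
\[V_{[n]-\{1\}}+\cdots+V_{[n]-\{K+1\}} \;=\; \ker \J_{[K+1]}\big|_{V_n}.\]

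The remaining step is to show that the right-hand side is all of $V_n$, i.e.\ that $\J_{[K+1]}$ acts as zero on $V_n$. By definition $\J_{[K+1]} = J_1^{\star_1} J_2^{\star_2} \cdots J_{K+1}^{\star_{K+1}} \circ \incThmD$, where $\incThmD\colon [n] \into [n] \disjoint [\star_{K+1}]$ is the standard inclusion. The prefactor $J_1^{\star_1} \cdots J_{K+1}^{\star_{K+1}}$ is a product of $K+1$ commuting transposition-differences whose $2(K+1)$ indices $1,\ldots,K+1,\star_1,\ldots,\star_{K+1}$ are all distinct in $[n] \disjoint [\star_{K+1}]$, so by definition this product lies in $I_{K+1} \subset \Z[S_{n+K+1}]$. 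Since $I_{K+1} \cdot V = 0$ (in particular, $I_{K+1}$ annihilates $V_{[n]\disjoint [\star_{K+1}]}$), we conclude that for every $v \in V_n$,
\[\J_{[K+1]}(v) = (J_1^{\star_1} \cdots J_{K+1}^{\star_{K+1}})\cdot \incThmD_*(v) = 0.\]
Hence $\ker \J_{[K+1]}|_{V_n} = V_n$, and combining with the previous display yields the claim.

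Finally, for the parenthetical remark in the statement, I would justify that if $V$ embeds into an FI-module $M$ generated in degree $\leq K$ then $I_{K+1} \cdot V = 0$: this is immediate from \autoref{pr:bigsat}, which gives $I_{K+1} \cdot M = 0$, hence $I_{K+1} \cdot V = 0$ as well. There is no real obstacle in this proof — the entire content of the corollary is already packaged in \autoref{thm:saturationalternate}, and the only observation needed is the membership $\J_{[K+1]} \in I_{K+1}$, which is just a matter of unwinding definitions.
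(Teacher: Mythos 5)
Your proposal is correct and matches the paper's own derivation: the paper likewise obtains \autoref{cor:VKK} from \autoref{thm:saturationalternate} by taking $a=K+1$ and noting that $\J_{[K+1]}\in I_{K+1}$, so that $I_{K+1}\cdot V=0$ forces $\ker\J_{[K+1]}|_{V_n}=V_n$. Your verification of the side conditions ($K+1\leq n$ and the parenthetical via \autoref{pr:bigsat}) is also exactly the intended reasoning.
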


\subsection{The combinatorics of \texorpdfstring{$\Z[\Hom_{\FI}([d],[n])]$}{Z[Hom-FI([d],[n])]}}
\label{section:combinatorics}

The discussion above did not depend on any ordering on $[n]$ (essentially treating it as an arbitrary finite set). By contrast, throughout the rest of this section we rely heavily on the ordering on $[n]$. This is inconsistent with the philosophy of FI-modules, so throughout \S\ref{section:combinatorics} we will not mention the category $\FI$ at all.

\begin{definition}[The collection $\Sigma(b)$]
For  $b\in \N$, let $\Sigma(b)$ denote the set of $b$-element subsets $S\subset [2b]$ satisfying the following property:
\begin{equation}
\label{catlabel}\tag{\cattag}
\text{The }i\text{-th largest element of }S\text{ is at most } 2i-1.
\end{equation}
For  $a\in \N$ with $1\leq a\leq b$, let $\Sigma(a,b)\subset \Sigma(b)$ consist of all those $S\in \Sigma(b)$ containing $[a]$:
\begin{equation}
\label{eq:Sab}
\Sigma(a,b)\coloneq \big\{S\in \Sigma(b)\,\big|\,[a]\subset S\subset [2b]\,\big\}
\end{equation}
\end{definition}
For example, it follows from (\cat) that $1\in S$ for any $S\in \Sigma(b)$, so for any $b\in \N$ we have $\Sigma(1,b)=\Sigma(b)$.
At the other extreme, we have $\Sigma(b,b)=\{[b]\}$. The subsets $\Sigma(a,b)$ interpolate between $\Sigma(1,b)=\Sigma(b)$ and $\Sigma(b,b)=\{[b]\}$; for example:
\begin{align*}
\Sigma(1,4)=&&&1234, 1235, 1236, 1237, 1245,1246, 1247, 
1256, 1257,
1345,1346,1347,
1356,1357\\
\Sigma(2,4)=&&&1234, 1235, 1236, 1237, 1245,1246, 1247, 
1256, 1257\\
\Sigma(3,4)=&&&1234, 1235, 1236, 1237\\
\Sigma(4,4)=&&&1234\end{align*}
We have written the elements of $\Sigma(a,b)$ in lexicographic order, which ordering we denote by $\preccurlyeq$.
We denote by $\comp{S}$ the complement $\comp{S}\coloneq [2b]\setminus S$. We will only use this notation for $b$-element subsets $S\subset [2b]$, so the notation is unambiguous; in particular, $\comp{S}$ is always a $b$-element subset of $[2b]$ as well.

\begin{numberedremark}
\label{remark:Sigmab}
We record some relations between the different collections $\Sigma(a,b)$.
\begin{enumerate}[label={(\alph*)},wide]
\item \label{part:Scupm} For any $S \in \Sigma(b)$ and any $m\leq 2b+1$ with $m\notin S$, the union $S \cup \set{m}$ belongs to $\Sigma(b+1)$. In particular, this holds if $m\in \comp{S}$. If $S\in \Sigma(a,b)$, then $S\cup\set{m}\in \Sigma(a,b+1)$.
\item \label{part:acinab} For any $S\in \Sigma(b)$ and any $c\leq b$, if $R\subset S$ is the $c$-element subset consisting of the $c$ smallest elements, then $R\in \Sigma(c)$. If $S\in \Sigma(a,b)$ for $a\leq c\leq b$, then $R\in \Sigma(a,c)$ as well.
\item If $S,T\in \Sigma(b)$ satisfy $T\preccurlyeq S$ and $S\in \Sigma(a,b)$, then $T\in \Sigma(a,b)$ as well. In other words, $\Sigma(a,b)$ is an ``initial segment'' of $\Sigma(b)$ (this is immediately visible in the description of $\Sigma(a,4)$ above).
\end{enumerate}
\end{numberedremark}

\para{Descendants}
The condition (\cat) gives one way to define the Catalan numbers: the $n$-th Catalan number is $\abs{\Sigma(n)} = \frac{1}{n+1}\binom{2n}{n}$. This is not a coincidence; our interest in $\Sigma(b)$ comes from the following characterization of the sets $S\in \Sigma(b)$, which is related to another definition of the Catalan numbers.

Given any $b$-element subset $S\subset [2b]$, write the elements of $S$ in increasing order as $s_1, \ldots, s_b$ and the elements of $\comp{S}$ in increasing order as $t_1, \ldots, t_b$.  Let  $(\Z/2)^S$ denote the subgroup of $S_{2b}$ generated by the commuting transpositions $(s_k\ \,t_k)\in S_{2b}$. If we define $J_S\in I_b$ as
\begin{equation}
\label{eq:defJS}
J_S \coloneq \prod_i J_{s_i}^{t_i},
\end{equation}
by \eqref{eq:JSexpansion} we have $J_S=\sum_{\sigma\in (\Z/2)^S} (-1)^\sigma \sigma$. In these terms, the defining property (\cat) of $\Sigma(b)$ has the following formulation:
\begin{equation}
\label{eq:lexfirst}
S\in \Sigma(b)\quad\iff\quad S \text{ is lexicographically first among } \{\sigma\cdot S\,|\,\sigma\in (\Z/2)^S\}
\end{equation}
Given $S\in \Sigma(b)$, we refer to the subsets $\{\sigma\cdot S\,|\,\sigma\in (\Z/2)^S\}$ as the \emph{descendants} of $S$; by \eqref{eq:lexfirst}, $S$ lexicographically precedes all of its descendants.\footnote{A set $S$ and its descendant $\sigma\cdot S$ need not determine the same subgroup $(\Z/2)^S\neq (\Z/2)^{\sigma\cdot S}$, so the relation of being a descendant is neither symmetric nor transitive. For example, if $S=\{1,2\}\subset [4]$, then $S'=\{1,4\}$ is a descendant of $S$, but $(\Z/2)^S=\langle (1\ \,3),(2\ \,4)\rangle$ whereas $(\Z/2)^{S'}=\langle (1\ \,2),(3\ \,4)\rangle$.
The descendants of $S$ are $S=12$, $S'=14$, $23$, and $34$ whereas the descendants of $S'$ are $S'=14$, $23$, $13$, and $24$.
}
In fact, we will use the following generalization. For any subset $U\subset [n]$ with $S\subset U$, and any $b$ distinct elements $u_1<\cdots<u_b$ of $[n]\setminus U$, we can consider the subgroup $(\Z/2)^b$ generated by the disjoint transpositions $(s_i\ \,u_i)$. By comparison with \eqref{eq:lexfirst}, it is straightforward to conclude:
\begin{lemma}
\label{lem:lexfirstarbitrary}
$S\in \Sigma(b)\quad\implies\quad U \text{ is lexicographically first among } \{\sigma\cdot U\,|\,\sigma\in (\Z/2)^b\}$.
\end{lemma}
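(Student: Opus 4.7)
The plan is to reduce the lemma to a single elementary inequality: namely, that $s_i < u_i$ for every $i \in [b]$. Granted this, an arbitrary nontrivial $\sigma \in (\Z/2)^b$ is encoded by a nonempty subset $K \subseteq [b]$, with $\sigma U = (U \setminus \{s_i\}_{i \in K}) \cup \{u_i\}_{i \in K}$. Since $\{s_i\}_{i \in [b]} \subseteq U$ while $\{u_i\}_{i \in [b]} \cap U = \emptyset$, the symmetric difference $U \triangle \sigma U$ equals $\{s_i, u_i : i \in K\}$, and these $2|K|$ elements are all distinct. The standard lex comparison of $b$-tuples (or $|U|$-tuples) shows that between two distinct sets, the one containing $\min(U \triangle \sigma U)$ is lex-smaller, since $U$ and $\sigma U$ agree exactly on the elements strictly below this minimum.

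Thus it suffices to show $\min(U \triangle \sigma U) \in \{s_i\}_{i \in K}$. Writing $i^* = \min K$, the minimum of $\{s_i : i \in K\}$ is $s_{i^*}$ (because $s_1 < \cdots < s_b$) and the minimum of $\{u_i : i \in K\}$ is $u_{i^*}$ (because $u_1 < \cdots < u_b$), so the needed statement reduces to $s_{i^*} < u_{i^*}$, which is a special case of the inequality $s_i < u_i$ for all $i$.

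To prove $s_i < u_i$, I would count the elements of $[n] \setminus U$ that lie in $[s_i]$. The defining condition \eqref{catlabel} gives $s_i \leq 2i-1$ (interpreting ``$i$-th largest'' as the $i$-th smallest, consistent with the $\Sigma(a,4)$ examples), and the elements $s_1 < \cdots < s_i$ provide $i$ distinct members of $U \cap [s_i]$. Hence
\[|[s_i] \setminus U| \;\leq\; s_i - i \;\leq\; (2i-1) - i \;=\; i-1.\]
Since the $u_j$ all lie in $[n] \setminus U$, at most $i-1$ of $u_1, \ldots, u_i$ can lie in $[s_i]$; as $u_i$ is the largest of these, we conclude $u_i \notin [s_i]$, i.e.\ $u_i > s_i$.

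There is no real obstacle here: once one notices that lex comparison of $U$ and $\sigma U$ is controlled purely by which set contains the minimum of the symmetric difference, the lemma reduces to the pointwise inequality $s_i < u_i$, and the condition (\ref{catlabel}) is tailored precisely to make the counting argument in the previous paragraph work. The only mild subtlety is verifying that the combinatorial condition genuinely forces $s_i < u_i$ for \emph{arbitrary} choices of $U \supseteq S$ and $u_1 < \cdots < u_b$ in $[n] \setminus U$, which is where one pays attention to the fact that $S$ itself occupies $i$ of the first $s_i$ slots.
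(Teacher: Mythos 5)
Your proof is correct, and it supplies exactly the verification that the paper leaves implicit: the paper's ``proof'' of this lemma is the one-line remark that it follows by comparison with \eqref{eq:lexfirst}, which is itself stated without argument as a reformulation of (\cat). Your two ingredients --- that lexicographic comparison of $U$ with $\sigma\cdot U$ is decided by which set contains $\min\bigl(U \bigtriangleup \sigma\cdot U\bigr)$, and the counting argument giving $s_i<u_i$ for every $i$ (using $u_j\in[n]\setminus U$ together with $\abs{U\cap[s_i]}\geq i$ and $s_i\leq 2i-1$) --- are precisely what makes that comparison ``straightforward,'' and your reading of (\cat) as bounding the $i$-th \emph{smallest} element is the intended one, as the $\Sigma(a,4)$ examples and the Catalan count confirm. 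Note also that the special case $U=S$, $u_i=t_i$ of your argument recovers the forward implication of \eqref{eq:lexfirst}, which is the only direction the paper actually uses.
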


\para{The $S_n$-module $F$ and subgroups}
Fix $d\in \N$ and $n\in \N$ for the remainder of Section~\ref{sec:combinatorics}. Let $F$ denote the $\Z[S_n]$-module associated to the permutation action on the set of injections $f\colon [d]\into [n]$. (In other words, as an $S_n$-module $F$ is isomorphic to $\Z[\Hom_{\FI}([d],[n])]$; however, we will wait until the next section to explore this connection with the category $\FI$.)

\begin{definition}
\label{def:Fsubgroups}
We  define certain subgroups of the free abelian group $F$ corresponding to particular subsets of the basis  $\{f\colon [d]\into [n]\}$. In these definitions $S$ is a $b$-element subset $S\in\Sigma(b)$.
\begin{align*}
F^{\neq S}&\coloneq &&\,\big\langle f\colon [d]\into [n]\,\big\vert\,S\not\subset \im f\big\rangle\\
F^{b}&\coloneq \bigcap_{S\in \Sigma(b)} F^{\neq S}&=&\,\big\langle f\colon [d]\into [n]\,\big\vert\,\forall S\in \Sigma(b), S\not\subset \im f\big\rangle\\
F^{a,b}&\coloneq \bigcap_{S\in \Sigma(a,b)} F^{\neq S}&=&\,\big\langle f\colon [d]\into [n]\,\big\vert\,\forall S\in \Sigma(a,b), S\not\subset \im f\big\rangle\\
F_{=S}&\coloneq &&\,\big\langle f\colon [d]\into [n]\,\big\vert\,\im f\cap [2b]=S\big\rangle
\end{align*}
In general none of these subgroups are preserved by the action of $S_n$ on $F$.
\end{definition}

We emphasize the contrast between $F^{\neq S}$ and $F_{=S}$: for fixed $b$, a given injection $f\colon [d]\into [n]$ may lie in  $F^{\neq S}$ for many different $S\in \Sigma(b)$; in contrast,  $f$ lies in $F_{=S}$ for at most one $S\in \Sigma(b)$ (namely $S=\im f\cap [2b]$, if this subset happens to belong to $\Sigma(b)$).

Since $\Sigma(b)=\Sigma(1,b)\supset \cdots \supset \Sigma(b,b)$, we have $F^b=F^{1,b}\subset F^{2,b}\subset \cdots\subset F^{b,b}$. 
Similarly, from \autoref{remark:Sigmab}\ref{part:acinab} we have $F^{a,a}\subset \cdots\subset F^{a,b}\subset\cdots$. In other words, if $a\leq a'$ and $b\leq b'$ then $F^{a,b}\subset F^{a',b'}$.  Note that, since $\Sigma(a,a)$ consists of the single set $S=[a]$, the subgroup $F^{a,a}$ is spanned by injections $f\colon [d]\into [n]$ with $i\notin \im f$ for some $i\in [a]$.

We make no assumptions whatsoever on $d$, $n$, or $b$ in this section, although in some cases the definitions become rather trivial. (For example, when $b>d$ we have $F=F^b$; when $d>n$ we have $F=0$; when $2b>n$ we have $I_b=0$.)

\begin{prop} 
\label{pr:bigb} For any $b$ such that $n \geq b+d$ we have
\begin{displaymath}
F=I_b \cdot F + F^b.
\end{displaymath}
\end{prop}

\begin{proof} It is vacuous that $I_b\cdot F+F^b\subset F$, so we must prove that $F\subset I_b\cdot F+F^b$. Assume otherwise; then some basis element $f$ does not lie in $I_b\cdot F+F^b$. Choose $f$ so that $\im f$ is lexicographically last among all such $f$. Since $f\notin F^b$, there exists some $S\in \Sigma(b)$ with $S\subset \im f$. Since $n\geq b+d$, we may choose $b$ distinct elements $u_1<\cdots<u_b$ from $[n]\setminus \im f$. Let $J=J_{s_1}^{u_1}\cdots J_{s_b}^{u_b}$, and consider the element \[(J-\id)\cdot f=\sum_{\sigma\neq 1\in (\Z/2)^b}(-1)^\sigma \sigma\cdot f.\] By 
\autoref{lem:lexfirstarbitrary} we have $\im(\sigma\cdot f)=\sigma\cdot \im f\succ \im f$ for all $\sigma\neq 1$. By our definition of $f$ (that its image was lexicographically last), $\sigma\cdot f$ is contained in $I_b\cdot F+F^b$ for all $\sigma\neq 1$, so $(J-\id)\cdot f\in I_b\cdot F+F^b$. However $J\cdot f\in I_b\cdot F$ by definition, so this implies that $J\cdot f-(J-\id)\cdot f=f$ lies in $I_b\cdot F+F^b$, contradicting our assumption.
\end{proof}

\para{Decomposing $F$ in terms of the subgroups $J_SF_{=S}$} We will also need, for a different purpose, a more specific version of \autoref{pr:bigb}. For each $S\in \Sigma(b)$, we have defined in \eqref{eq:defJS} the operator $J_S\in \Z[S_{2b}]$. For any $n\geq 2b$ we may consider this as an operator in $\Z[S_n]$, which we also denote by $J_S$. 
\begin{prop} 
\label{pr:indb}
For any $a\leq b$ such that $2b\leq n$,
\[F^{a,b+1}  \subset F^{a,b} + \sum_{S \in \Sigma(a,b)} J_S F_{=S}.\]
\end{prop}

\begin{proof}
For this proof only, define \begin{equation}
\label{eq:MpropJs}
F^{(a,b)}\coloneq F^{a,b}+\sum_{S\in\Sigma(a,b)}F_{=S}\quad\ \ =\ \big\langle f\colon [d]\into [n]\,\big\vert\,\nexists S\in \Sigma(a,b)\text{ s.t.\ } S\subsetneq \im f\cap [2b]\big\rangle.
\end{equation}
In words, $F^{(a,b)}$ is spanned by those injections $f\colon [d] \inj [n]$ such that $\im f \cap [2b]$ does not {\em properly} contain any element of $\Sigma(a,b)$ (but $\im f\cap[2b]$ is allowed to be {\em equal} to some $S\in\Sigma(a,b)$).

We begin by showing that $F^{a,b+1}\subset F^{(a,b)}$. Consider a basis element $f$ which does not lie in $F^{(a,b)}$. By definition there exists $S\in \Sigma(a,b)$ such that $S\subsetneq \im f\cap [2b]$. Choose $m\in \im f\cap [2b]$ with $m\notin S$, and define $T=S\cup\{m\}$. We have $T\in \Sigma(a,b+1)$ by \autoref{remark:Sigmab}\ref{part:Scupm}, so $f\notin F^{a,b+1}$ as desired.

We now show that for any $S\in \Sigma(a,b)$ we have: \begin{equation}
\label{eq:JsMnds}
F_{=S}\subset J_S F_{=S}+F^{a,b}+\sum_{\substack{S'\in \Sigma(a,b)\\S'\succ S}}F_{=S'}
\end{equation} Consider a basis element $f\in F_{=S}$ and the associated element \[(J_S-\id)\cdot f=\sum_{\sigma\neq 1\in (\Z/2)^S}(-1)^\sigma \sigma\cdot f.\]
By assumption $\im f\cap [2b]=S$, so $\im(\sigma\cdot f)\cap [2b]=\sigma\cdot \im f\cap [2b]=\sigma \cdot S$ is a descendant of $S$.
By \eqref{eq:lexfirst}, the fact that $S\in \Sigma(a,b)$ means that $\sigma\cdot S\succ S$ for all $\sigma\neq 1\in (\Z/2)^S$. Thus for each $\sigma$ there are two possibilities for the $b$-element subset $\sigma\cdot S$: either $\sigma\cdot S$ does not belong to $\Sigma(a,b)$, in which case $\sigma\cdot f\in F^{a,b}$; or $\sigma\cdot S\in \Sigma(a,b)$ but $\sigma\cdot S\succ S$, in which case $\sigma\cdot f\in F_{=\sigma\cdot S}$. In other words,  \[(\id-J_S)\cdot f\in F^{a,b}+\sum_{\substack{S'\in \Sigma(a,b)\\S'\succ S}}F_{=S'}.\]
Writing $f=J_S\cdot f-(J_S-\id)\cdot f$, this demonstrates \eqref{eq:JsMnds}.

Beginning with \eqref{eq:MpropJs}, we apply \eqref{eq:JsMnds} to each $S\in \Sigma(a,b)$ in lexicographic order to obtain the desired
\[F^{a,b+1}\quad\subset\quad
F^{(a,b)}=
F^{a,b}+\sum_{S\in\Sigma(a,b)}F_{=S}\quad\subset\quad
\sum_{S \in \Sigma(a,b)} J_S F_{=S}+F^{a,b}.\qedhere\]
\end{proof}

\subsection{Proof of  \texorpdfstring{\autoref{thm:saturationalternate}}{Theorem E'}}
\label{section:saturationproof}
We are now ready to apply the combinatorial apparatus above to FI-modules and prove \autoref{thm:saturationalternate}.
\begin{proof}[Proof of \autoref{thm:saturationalternate}]
We continue with the notation of \S\ref{section:combinatorics}, so  $F$ denotes  the $S_n$-module $\Z[\Hom_{\FI}([d],[n])]$, and $F^b$ and $F^{a,b}$ are the subgroups of $F$ defined in \autoref{def:Fsubgroups}.
Define subgroups $V^{b}\subset V_n$ and $V^{a,b}\subset V_n$ by $V^{b}\coloneq \im(F^{b}\otimes V_d\to V_n)$ and $V^{a,b}\coloneq \im(F^{a,b}\otimes V_d\to V_n)$. From the containments following \autoref{def:Fsubgroups} we see that $V^b=V^{1,b}\subset V^{2,b}\subset \cdots\subset V^{b,b}$.

Let us understand these subgroups $V^{a,b}$ more concretely. 
To say that $V$ is generated in degree $\leq d$ means that $V_n$ is spanned by its subgroups $V_T$ as $T$ ranges over subsets $T\subset [n]$ with $\abs{T}=d$. (Throughout this proof, $T$ will always denote a subset $T\subset [n]$ with $\abs{T}=d$.) 

By definition, $V^b$ is the subgroup of $V_n$ spanned by $f_*(V_d)$ where $f\colon [d]\into [n]$ ranges over injections for which $\im f$ does not contain any $S\in \Sigma(b)$.
In other words, \[V^b=\spn\big\{\,V_T\,\big\vert\,T\subset [n],\abs{T}=d\text{ s.t.\ $T$ does not contain any $S\in \Sigma(b)$}\,\big\}.\]
Similarly, $V^{b,b}$ is by definition the subgroup of $V_n$ spanned by those $V_T$ for which $[b]\not\subset T$ and $\abs{T}=d$. Since $V_{[n]-\{i\}}$ is the subgroup spanned by those $V_T$ where $i\notin T$, we see that \begin{equation}
\label{eq:Vbb}
V^{b,b}=V_{[n]-\{1\}}+\cdots+V_{[n]-\{b\}}.
\end{equation}

Fix some $n> K+d$ and some $a\leq n$. According to \eqref{eq:Vbb}, the desired conclusion of the theorem states that $\ker \J_{[a]}=V^{a,a}$ when $n>K+d$. From \eqref{eq:VMJ} we know that $V^{a,a}\subset \ker \J_{[a]}$ for \emph{all} $n$, so what we need to prove is that $\ker \J_{[a]}\subset V^{a,a}$ when $n>K+d$. We accomplish this by proving by reverse induction on $b$ that $\ker \J_{[a]}\subset V^{a,b}$ for all $b\geq a$.

Our base case is $b=K+1$. In this case we will prove something much stronger than the inductive hypothesis; we will prove  \autoref{cor:VKK} by showing that it is a direct consequence of \autoref{pr:bigb}. Recall that we always have the containments $V^{K+1}\subset V^{a,K+1}\subset V^{K+1,K+1}\subset V_n$.
The statement of \autoref{pr:bigb} for $b=K+1$ is that $F=I_{K+1}\cdot F+F^{\ell}$, and the hypothesis is satisfied since $n\geq (K+1)+d$. Therefore  \[V_n=\im(I_{K+1}\cdot F\otimes V_d)+V^{K+1}=I_{K+1}\cdot V_n+V^{K+1}.\]
Since $I_{K+1}\cdot V_n=0$ by assumption, we conclude that $V_n=V^{K+1}=V^{a,K+1}=V^{K+1,K+1}$. Notice that $V_n=V^{K+1,K+1}$ is precisely the conclusion of \autoref{cor:VKK}, as mentioned above. This concludes the base case.

For the inductive step, The key is to show that for all $a\leq b\leq K$ we have
\begin{equation}
\label{eq:Vab}
V^{a,b+1}\cap \ker\J_{[a]} \subset V^{a,b}.
\end{equation}
Given this, if we assume $\ker \J_{[a]}\subset V^{a,b+1}$ by induction, \eqref{eq:Vab} implies 
$\ker \J_{[a]}=V^{a,b+1}\cap \ker\J_{[a]} \subset V^{a,b}$,
which is the desired inductive hypothesis. The remainder of the argument thus consists of the proof of \eqref{eq:Vab}.

\medskip
For convenience, we would like to assume that $K\leq d$. If $K>d$, replacing $K$ by $d$ in the statement of \autoref{thm:saturationalternate} makes the conclusion stronger, while the hypothesis is still satisfied because $I_{d+1}\cdot V = 0$ by \autoref{pr:bigsat}. Therefore making this replacement if necessary, we may assume that $K\leq d$.
Our assumption on $n$ thus implies  $n>K+d\geq 2K\geq 2b$. Therefore we may apply \autoref{pr:indb}, which states that $F^{a,b+1}\subset F^{a,b}+\sum_{S\in \Sigma(a,b)}J_SF_{=S}$. We conclude that 
every $v\in V^{a,b+1}$ can be written as
\begin{equation}
\label{eq:vdecomp}
v=v^{a,b}+\sum_{S\in \Sigma(a,b)}v_S\qquad\qquad \text{where }v^{a,b}\in V^{a,b},\ v_S\in J_SF_{=S}\cdot V_d.
\end{equation}
It will suffice to show that if an element $v$ as in \eqref{eq:vdecomp} lies in $\ker \J_{[a]}$,
then in fact each term $v_S$ is zero, which implies \eqref{eq:Vab}.

Assume that $v\in V^{a,b+1}\cap \ker \J_{[a]}$, and suppose for a contradiction that $v_S \neq 0$ for some $S \in \Sigma(a,b)$. Let $S$ be the lexicographically first such element of $\Sigma(a,b)$. We may thus write
\begin{equation}
\label{eq:vdecompfirst}
v = v^{a,b} + v_S + \sum_{\substack{T\in \Sigma(a,b)\\ S\prec T}} v_T.
\end{equation}

For any $S\in \Sigma(a,b)$, write the elements of $S$ in order as $s_1<\cdots<s_b$, and define 
\[
\J_S \coloneq J_{s_1}^{\star_1}\cdots J_{s_b}^{\star_b}
\circ \incThmD\in \Z\big[\Hom_{\FI}([n],[n]\disjoint [\star_b])\big].\]
We will  establish a series of claims about $\J_S$, which hold for any $S\in \Sigma(a,b)$.
\begin{enumerate}[label={Claim~\arabic*.},ref={Claim~\arabic*},itemsep=2.5pt,parsep=2.5pt,topsep=4pt]
\item \label{item:JSkJa} $\J_S\cdot (\ker \J_{[a]})=0$.
\end{enumerate}

Proof of \ref{item:JSkJa}: To say that $S\in \Sigma(a,b)$ means that $[a]\subset S$, so the elements of $S$ are necessarily $1<2<\cdots<a<s_{a+1}<\cdots<s_b$. Therefore \[\J_S=J_1^{\star_1}\cdots J_a^{\star_a}J_{s_{a+1}}^{\star_{a+1}}\cdots J_{s_b}^{\star_b}=\J_{[a]}\cdot X\] Since $\J_S=\J_{[a]}\cdot X=X\cdot \J_{[a]}$, we have $\ker \J_{[a]}\subset \ker \J_S$ as claimed.
\proofofclaimend

By \eqref{eq:bigsatlemma}, we have 
\begin{equation}
\label{eq:JSfzero}
\J_S\cdot f=0\qquad\text{for any }f\colon [d]\into [n]\text{ with }S\not\subset \im f
\end{equation} since for any such $f$ there exists $s_i\notin \im f$, so $\{s_i,\star_i\}\cap \im f=\emptyset$. This has the following consequences.
\begin{enumerate}[resume*]
\item \label{item:JSFab} $\J_S\cdot F^{a,b}=0$ 
\item \label{item:JSJTFT} $\J_S\cdot J_TF_{=T}=0$ for any $T\in \Sigma(a,b)$ such that $S \prec T$.
\end{enumerate}

Proof of \ref{item:JSFab}: By definition any $f\in F^{a,b}$ has $S\not\subset\im f$, so $\J_S\cdot f=0$ by \eqref{eq:JSfzero}.\proofofclaimend

Proof of \ref{item:JSJTFT}: Given a generator $f\in F_{=T}$ we know that $\im f\cap [2b]=T$. As in the proof of \autoref{pr:indb}, the terms of $J_T\cdot f$ consist of $\sigma\cdot f$ for $\sigma\in (\Z/2)^T$. The intersections $\im(\sigma\cdot f)\cap [2b]$ are precisely the descendants $\sigma\cdot T$. Since $T\in \Sigma(a,b)$ we have $\sigma\cdot T\succcurlyeq T$. In particular, since $S\prec T\preccurlyeq \sigma\cdot T$, every term satisfies $S\not\subset \im(\sigma\cdot f)$. By \eqref{eq:JSfzero}, $\J_S\cdot J_TF_{=T}=0$ as desired.\proofofclaimend

\medskip
We now apply these consequences to the decomposition \eqref{eq:vdecompfirst}. By \ref{item:JSkJa}, our assumption that $v\in \ker \J_{[a]}$ implies that $\J_S\cdot v=0$. 
\ref{item:JSFab} and \ref{item:JSJTFT} show that $\J_S\cdot v^{a,b}=0$ and $\J_S\cdot v_T=0$. We conclude that $\J_S\cdot v_S=\J_S\cdot v=0$; it remains to show that this implies $v_S=0\in V_n$.

We show this using the following two claims, which we prove in turn. Define $\tau\in \End_{\FI}([n]\disjoint [\star_b])$ to be the involution $\tau\coloneq (t_1\ \,\star_1)\cdots(t_b\ \,\star_b)$, where $(t_1, \ldots, t_b)$ denotes the complement of $S$ in $[2b]$ as before.

\begin{enumerate}[resume*] 
\item \label{item:JSJSFS} $\J_S\cdot J_S=\J_S$ when restricted to $F_{=S}$. 
\item \label{item:tauJS} $\tau\J_s=\incThmD \circ J_S$ when restricted to $F_{=S}$.
\end{enumerate}

Proof of \ref{item:JSJSFS}: As in \ref{item:JSJTFT}, given $f\in F_{=S}$ with $\im f\cap [2b]=S$, the terms of $J_S\cdot f$ consist of $f$ together with $\sigma\cdot f$ for $\sigma\neq 1\in (\Z/2)^S$. Each of the latter has $\im(\sigma\cdot f)\cap [2b]=\sigma\cdot S\succ S$. Therefore $S\not\subset \sigma\cdot f$ for $\sigma\neq 1$, so $\J_S\cdot \sigma\cdot f=0$ by \eqref{eq:JSfzero}. We conclude that $\J_S\cdot J_S\cdot f=\J_S\cdot (f+\sum(-1)^\sigma \sigma\cdot f)=\J_S\cdot f$, as claimed.\proofofclaimend

Proof of \ref{item:tauJS}: Note that $\tau(J_{s_1}^{\star_1}\cdots J_{s_b}^{\star_b}
)\tau^{-1}=J_{s_1}^{t_1}\cdots J_{s_b}^{t_b}$. Therefore $\tau\J_S=J_{s_1}^{t_1}\cdots J_{s_b}^{t_b}\circ \tau\circ \incThmD\in \Z\Hom_{\FI}([n],[n]\disjoint [\star_b])$. By definition, the image of  a map $f\in F_{=S}$ does not contain $t_i$, so when restricted to $F_{=S}$ we have $\tau\circ \incThmD=\incThmD$. We conclude that $\tau\J_S=J_{s_1}^{t_1}\cdots J_{s_b}^{t_b}\circ \incThmD=\incThmD\circ J_S$, as claimed.\proofofclaimend

We now complete the proof. Write $v_S=J_S\cdot w_S$ for $w_S\in F_{=S}\cdot V_d\subset V_n$.  \ref{item:JSJSFS} implies that $\J_S\cdot v_S=\J_S\cdot J_S\cdot w_S=\J_S\cdot w_S$. Thus $\J_S\cdot w_S=0$, so certainly $\tau\J_S w_S=0$. \ref{item:tauJS} implies that $\tau \J_S w_S=\incThmD(J_S w_S)=\incThmD(v_S)$. Combining these, we see that $\incThmD(v_S)=0$. Since 
$V$ is torsion-free, $\incThmD$ is injective, so this proves that $v_S=0$.

This contradicts our assumption that $v_S\neq 0$, so we conclude from \eqref{eq:vdecomp} that $v\in V^{a,b}$. This concludes the proof of the containment \eqref{eq:Vab}; as we explained following \eqref{eq:Vab}, this completes the proof of the inductive hypothesis and thus concludes the proof of the theorem.
\end{proof}

\section{Bounds on the homology  of FI-modules}
\label{section:bounds-and-proof-of-thm-A}

\para{An outline of the proof of \autoref{main:regularity}}
Before launching into the proof of \autoref{main:regularity}, we outline the steps that we will take. Recall that \autoref{main:regularity} states that for an FI-module $W$, the degree of the FI-homology $H_p(W)$ can be bounded in terms of certain invariants of $W$. In this outline, whenever we speak of a ``bound on'' a particular FI-module, we mean a bound on its degree.
\begin{enumerate}
\item \label{step:DaH} We prove that a bound on $D^a X$ can be converted to a bound on $H_0(X)$ (\autoref{pr:da}).
\item We show that \autoref{main:saturation} gives a bound on the degree of $H_1^{D^a}(W)$ for all $a$.
\item Using homological properties of the functor $D$, we show that this bound on $H_1^{D^a}(W)$ implies a bound on $H_p^{D^a}(W)$ for all $p$ and all $a$.
\item If $X_p$ is the $p$-th syzygy of $W$,\footnote{Here we consider syzygies relative to a free resolution of $W$ that is \emph{minimal} in the sense that all maps become 0 after applying $H_0$.} it is almost true that $H_p(W)=H_0(X_p)$; specifically, we have $H_p(W)=H_1(X_{p-1})$ and $H_p(W)\into H_0(X_p)$. Similarly, it is almost true that $H_p^{D^a}(W)=D^a(X_p)$, and we prove that for sufficiently large $a$ this \emph{is} true. Therefore by using Step~\ref{step:DaH}, we can convert our bound on $H_p^{D^a}(W)$ to the desired bound on $H_p(W)$.
\end{enumerate}

\subsection{Relations and \texorpdfstring{$H_1$}{H-1}}
\label{sec:relH}
Our main theorems will be proved in terms of a presentation of the FI-module in question. We saw in \autoref{lem:genequiv} that $W$ is generated in degree $\leq k$ if and only if $\deg H_0(V)\leq k$. The existence of a presentation for $W$ with relations in degree $\leq d$ is very close to the condition $\deg H_1(W)\leq d$, but they are not quite equivalent.\footnote{For instance, a FI-module $W$ admitting a finite-length filtration whose graded pieces are free has $H_1(W) = 0$, but such a $W$ need not itself be free (recall that free FI-modules need not be projective).  If we could always find a surjection $M(H_0(W))\onto W$ lifting the isomorphism on $H_0$ there would be no problem, but such a surjection does not always exist. For example, it can happen that $H_0(W)_n\iso \Z/2\Z$ while $W_n$ is a free abelian group, in which case there is no map $H_0(W)_n\to W_n$ at all.} Therefore we distinguish these in our terminology as follows.
\begin{definition}
\label{def:genrel}
We say that an FI-module $W$ is generated in degree $\leq k$ and related in degree $\leq d$ if there exists a short exact sequence \[0\to V\to M\to W\to 0\] where $M$ is a free FI-module generated in degree $\leq k$ and   $V$ is generated in degree $\leq d$.
\end{definition}

\begin{proposition}
\label{pr:genrel}
Any FI-module $W$ is generated in degree $\leq \deg H_0(W)$ and related in degree $\leq \max(\deg H_0(W),\deg H_1(W))$. 
\end{proposition}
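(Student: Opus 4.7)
Set $k \coloneq \deg H_0(W)$ and $d \coloneq \max(\deg H_0(W), \deg H_1(W))$. The plan is to exhibit a specific short exact sequence $0 \to V \to M \to W \to 0$ witnessing the claim, with $M$ chosen as the most economical free FI-module that surjects onto $W$, and then to read off the bound on $H_0(V)$ from the long exact sequence in FI-homology together with the acyclicity of free modules established in \autoref{lemma:freeHacyclic}.

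For the construction, I take $M \coloneq M(W_{\leq k})$, the free FI-module on the truncated FB-module $W_{\leq k}$ from the definition preceding \autoref{lem:genequiv}. By construction $M$ is free and generated in degree $\leq k$. I next need to check that the counit map $M \to W$ induced by the inclusion $W_{\leq k} \into \iB^* W$ is surjective. In degrees $n \leq k$ this is immediate from the formula $M(W_{\leq k})_{[n]} = \bigoplus_{S \subset [n]} W_S$ in \eqref{eq:MW}, since the $S = [n]$ summand is $W_n$ itself mapping by the identity. In degrees $n > k$, the image is $W_{\langle \leq k\rangle}{}_n$, which equals $W_n$ because $W$ is generated in degree $\leq k$ by \autoref{lem:genequiv}.

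Letting $V \coloneq \ker(M \to W)$, I apply the long exact sequence of left derived functors of $H_0$ to $0 \to V \to M \to W \to 0$. Since $H_1(M) = 0$ by \autoref{lemma:freeHacyclic}, the tail of the sequence yields an exact sequence
\[
0 \to H_1(W) \to H_0(V) \to H_0(M) \to H_0(W) \to 0,
\]
so $\deg H_0(V) \leq \max(\deg H_0(M), \deg H_1(W))$. It remains to bound $\deg H_0(M)$: the computation in the proof of \autoref{lemma:freeHacyclic} shows that $H_0 \circ M = \pi^*$ as functors from $\FBMod$ to $\FIMod$, and $\pi^*$ preserves degree, so $\deg H_0(M) = \deg W_{\leq k} \leq k = \deg H_0(W)$. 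Combining these bounds gives $\deg H_0(V) \leq d$, and one more invocation of \autoref{lem:genequiv} shows $V$ is generated in degree $\leq d$, as required.

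There is no real obstacle here beyond the choice of the correct free cover: the one potential pitfall, flagged in the footnote to \autoref{def:genrel}, is that one cannot in general lift $H_0(W) \to W$ to a map of FI-modules, so it would be wrong to try $M = M(H_0(W))$. Taking $M(W_{\leq k})$ side-steps this issue because $W_n$ sits as a direct summand of $M_n$ for each $n \leq k$, making surjectivity automatic without any lifting.
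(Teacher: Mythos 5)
Your proof is correct and follows essentially the same route as the paper: take the free cover $M(W_{\leq k})$ (the paper writes $M(W_{\langle\leq \deg H_0(W)\rangle})$ but uses it the same way, with $H_0(M)=\pi^*(W_{\leq k})$), let $V$ be the kernel, and bound $\deg H_0(V)$ via the exact sequence $0\to H_1(W)\to H_0(V)\to H_0(M)$ coming from the $H_0$-acyclicity of free FI-modules. Your extra verifications of surjectivity and of $H_0\circ M=\pi^*$ are exactly the points the paper delegates to \autoref{lem:genequiv} and \autoref{lemma:freeHacyclic}.
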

\begin{proof}
Set $M\coloneq M(W_{\langle \leq \deg H_0(W)\rangle})$. By \autoref{lem:genequiv}, the natural map $M\onto W$ is surjective.

Let $V$ be its kernel, so that $0\to V\to M\to W\to 0$ is a presentation of $W$ as in \autoref{def:genrel}. By \autoref{lemma:freeHacyclic} $M$ is $H_0$-acyclic, so we have the exact sequence:
\begin{displaymath}
0 \ra H_1(W) \ra H_0(V) \ra H_0(M)
\end{displaymath}
From this we conclude $\deg H_0(V)$ is bounded by the degrees of the other two terms. Since $H_0(M)=W_{\leq \deg H_0(W)}$, we see in particular that  $\deg H_0(M)=\deg H_0(W)$. Therefore $V$ is generated in degree $\leq \max(\deg H_0(W),\deg H_1(W))$, as desired.
\end{proof}

From this proposition we see that relations will indeed behave as we would expect, as long as $\deg H_0(W)\leq \deg H_1(W)$. We will reduce to this case in the proof of \autoref{main:regularity} using the following proposition, whose proof was explained to us by Eric Ramos; we are grateful to the referee for suggesting the current statement. Similar arguments appear in Li--Yu \cite{LiYu} in the proof of Corollary 3.4 and the second proof of Lemma 3.3.

Given an FI-module $W$ and some $m\geq 0$, consider the FI-module $Z=W_{\langle \leq m\rangle}/W_{\langle <m\rangle}$. Note that $H_0(Z)$ vanishes except in degree $m$, where $H_0(Z)_m=Z_m=H_0(W)_m$, so we have a surjection $M(Z_m)\onto Z$. In terms of the original FI-module $W$, we have a natural surjection from $M(H_0(W)_m)$ to $W_{\langle \leq m\rangle}/W_{\langle <m\rangle}$ which is an isomorphism in degree $m$.
\begin{proposition}
\label{pr:ramosCE}  Let $W$ be an FI-module with $\deg H_0(W)<\infty$. The natural surjection  
\[M(H_0(W)_m)\onto W_{\langle \leq m\rangle}/W_{\langle <m\rangle}\] is an isomorphism whenever $m\geq \deg H_1(W)$ or $m > \deg H_0(W)$. 
In particular, 
the inclusion $W_{\langle <\deg H_1(W)\rangle }\into W$ induces an isomorphism on $H_i$ for all $i>0$.
\end{proposition}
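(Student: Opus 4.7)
The plan is to show the surjection $\phi\colon M(H_0(W)_m)\onto Z$, where $Z\coloneq W_{\langle \leq m\rangle}/W_{\langle <m\rangle}$, is an isomorphism by establishing $H_1(Z)=0$ under the stated hypotheses. Since $\phi$ is already an isomorphism in degree $m$ and $M(H_0(W)_m)$ is $H_0$-acyclic by \autoref{lemma:freeHacyclic}, the long exact sequence associated to $0\to K\to M(H_0(W)_m)\to Z\to 0$ (with $K=\ker\phi$) yields $H_1(Z)\iso H_0(K)$, and $K$ is supported in degrees $>m$. So $H_1(Z)$ is automatically supported in degrees $>m$, which will be the crucial \emph{degree clash} below.

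The case $m>\deg H_0(W)$ is immediate: then $H_0(W)_m=0$ and $W=W_{\langle <m\rangle}$, so both sides vanish. For the case $m\geq \deg H_1(W)$, I would proceed by induction on $k\coloneq \deg H_0(W)$. In the sub-case $m=k$, we have $W_{\langle \leq m\rangle}=W$, so $Z=W/W_{\langle <m\rangle}$; the long exact sequence of $0\to W_{\langle <m\rangle}\to W\to Z\to 0$, combined with the fact that $H_0(W_{\langle <m\rangle})\to H_0(W)$ is the identity on its degree-$<m$ support and zero above (hence injective), yields a surjection $H_1(W)\onto H_1(Z)$. But $H_1(W)$ lives in degrees $\leq m$ while $H_1(Z)$ lives in degrees $>m$, so this surjection is zero and $H_1(Z)=0$. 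In the sub-case $m<k$, applying the preceding sub-case at the integer $k$ itself (which satisfies $k>m\geq \deg H_1(W)$) shows that $W/W_{\langle <k\rangle}\iso M(H_0(W)_k)$ is free, hence $H_i$-acyclic for $i>0$. The long exact sequence of $0\to W_{\langle <k\rangle}\to W\to W/W_{\langle <k\rangle}\to 0$ then gives $H_i(W_{\langle <k\rangle})\iso H_i(W)$ for $i\geq 1$, so $W_{\langle <k\rangle}$ has $\deg H_0<k$ and $\deg H_1\leq m$. Since $W$ and $W_{\langle <k\rangle}$ agree in degrees $<k$, the formation of $H_0(-)_m$ and of $W_{\langle \leq m\rangle}/W_{\langle <m\rangle}$ is unchanged when $W$ is replaced by $W_{\langle <k\rangle}$, so the inductive hypothesis applied to $W_{\langle <k\rangle}$ gives the desired isomorphism for $W$.

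The ``in particular'' statement then follows: setting $N=\deg H_1(W)$ and $K=\deg H_0(W)$, we may assume $N\leq K$ (else $W_{\langle <N\rangle}=W$ trivially). The main claim shows each $Z^{(m)}\coloneq W_{\langle \leq m\rangle}/W_{\langle <m\rangle}$ with $m\geq N$ is free, hence $H_i$-acyclic for $i>0$, and the long exact sequence for $0\to W_{\langle \leq m-1\rangle}\to W_{\langle \leq m\rangle}\to Z^{(m)}\to 0$ gives $H_i(W_{\langle \leq m-1\rangle})\iso H_i(W_{\langle \leq m\rangle})$ for $i\geq 1$ and every $m\geq N$. Iterating from $m=N$ up to $m=K$ (the filtration stabilizes at $W$ since $\deg H_0(W)=K<\infty$) yields $H_i(W_{\langle <N\rangle})\iso H_i(W_{\langle \leq K\rangle})=H_i(W)$. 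The main obstacle is the degree-clash argument in the $m=k$ sub-case of the induction; everything else is bookkeeping with long exact sequences, but that single observation is what forces the vanishing of $H_1(Z)$.
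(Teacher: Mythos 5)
Your argument is correct and is essentially the paper's: both proofs squeeze $H_1$ of the top graded piece $Z=W_{\langle\le m\rangle}/W_{\langle<m\rangle}$ between degrees $>m$ (via the kernel of the free cover $M(H_0(W)_m)\onto Z$, which is supported in degrees $>m$) and degrees $\le m$ (via the long exact sequence of the filtration together with $m\ge \deg H_1(W)$), forcing $H_1(Z)=0$ and hence freeness of $Z$. The only difference is bookkeeping: you induct on $\deg H_0(W)$ and truncate $W$ from the top after peeling off the free layer $W/W_{\langle<k\rangle}$, whereas the paper runs a reverse induction on $m$ carrying along the claim that $W_{\langle<m\rangle}\into W$ is an isomorphism on $H_i$ for $i>0$; these amount to the same downward sweep through the filtration.
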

\begin{proof}
We proceed by reverse induction on $m$, showing both that $M(H_0(W)_m)\iso W_{\langle \leq m\rangle}/W_{\langle <m\rangle}$ \emph{and} that the inclusion $W_{\langle <m\rangle}\into W$ induces isomorphisms on $H_i$ for all $i>0$. 
Our base case consists of all $m>\deg H_0(W)$, when both claims are essentially tautological: in this case $M(H_0(W))_m=0$ and $W_{\langle <m\rangle}= W_{\langle \leq m\rangle}=W$, so both sides of the claimed isomorphism vanish, proving the first claim. Similarly $W_{\langle <m \rangle}=W$ if $m>\deg H_0(W)$, so the second claim is automatic.

For ``usual'' FI-modules with $\deg H_0(W)<\deg H_1(W)$ there is nothing left to prove; it remains to handle FI-modules with $\deg H_1(W)\leq \deg H_0(W)$. 

For the inductive step, 
write $Z$ for the quotient $Z\coloneq W_{\langle \leq m\rangle}/W_{\langle <m\rangle}$, and let $A$ be the kernel of the surjection $M(Z_m)\onto Z$, so that $0\to A\to M(Z_m)\to Z\to 0$. The FI-module $A$ vanishes in degrees $\leq m$: in degree $m$ the map $M(Z_m)_m\to Z_m$ is an isomorphism, and $M(Z_m)$ itself is zero in degrees $<m$. Since $A$ vanishes in degrees $\leq m$, $H_0(A)$ also vanishes in degrees $\leq m$; since $H_0(M(Z_m))$ vanishes in degrees $>m$, the map $H_0(A)\to H_0(M(Z_m))$ is zero. Since $M(Z_m)$ is $H_0$-acyclic, we conclude that there is an isomorphism $H_1(Z)\iso H_0(A)$.

Now consider the long exact sequence 
\[\cdots\to H_1(W_{\langle \leq m\rangle})\to H_1(Z)\to H_0(W_{\langle <m\rangle})\to\cdots.\]
By induction, we know that $\deg H_1(W_{\langle \leq m\rangle})=\deg H_1(W)\leq m$, and by definition $\deg H_0(W_{\langle <m\rangle})<m$. Therefore $\deg H_1(Z)\leq m$.
 
We showed above that $H_1(Z)$ vanishes in degrees $>m$, while $H_0(A)$ vanishes in degrees $\leq m$, so $H_1(Z)=H_0(A)=0$. Therefore $A=0$, and the natural map $M(H_0(W)_m)=M(Z_m)\to Z$ is an isomorphism, as claimed.
Since free FI-modules are $H_0$-acyclic, we conclude that the inclusion $W_{\langle <m\rangle}\into W_{\langle \leq m\rangle}$ induces an isomorphism on $H_i$ for all $i>0$; the inclusion $W_{\langle \leq m\rangle}\into W$ induces an isomorphism on $H_i$ for all $i>0$ by induction, so we have proved the inductive hypothesis.
\end{proof}

\subsection{Homological properties of the derivative}
Considering $\FBMod$ as a full subcategory of $\FIMod$, the functor $S$ restricts to a functor $S\colon \FBMod\to \FBMod$. 
\begin{lemma}
\label{lemma:DMMS}
There is a natural isomorphism of functors $D\circ M=M\circ S\colon \FBMod\to \FIMod$.
\end{lemma}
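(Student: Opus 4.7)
The plan is to prove the isomorphism at the level of values on a finite set $T$ using the explicit formula \eqref{eq:MW} for free FI-modules, and then verify that the resulting isomorphism is natural in both $W \in \FBMod$ and $T \in \FI$.

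First I would compute $(SM(W))_T = M(W)_{T\disjoint\{\star\}} = \bigoplus_{S\subset T\disjoint\{\star\}}W_S$ and split this direct sum into the summands with $\star\notin S$ and those with $\star\in S$. The summands with $\star\notin S$ are exactly indexed by subsets $S\subset T$, and on inspection of the functoriality of $M$ they form precisely the image of the map $\iSV\colon M(W)_T\to M(W)_{T\disjoint\{\star\}}$ induced by the inclusion $\iset_T\colon T\into T\disjoint\{\star\}$. Taking the cokernel leaves exactly the summands with $\star\in S$, i.e.\ those indexed by subsets of the form $S'\disjoint\{\star\}$ for $S'\subset T$, so
\[
(DM(W))_T \;\iso\; \bigoplus_{S'\subset T} W_{S'\disjoint\{\star\}} \;=\; \bigoplus_{S'\subset T}(SW)_{S'} \;=\; M(SW)_T,
\]
where the last equality is again \eqref{eq:MW} applied to the FB-module $SW$.

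Next I would check naturality in $W$: a morphism of FB-modules $W\to W'$ induces component-wise maps $W_S\to W'_S$, which clearly respect the decomposition by presence or absence of $\star$, so the isomorphism above is natural in $W$. For naturality in $T$, given an FI-morphism $g\colon T\to T'$, I would unwind how $g_*$ acts on $(DM(W))_T$ and on $M(SW)_T$. On $M(W)_{T\disjoint\{\star\}}$ the map induced by $g\disjoint \id_{\{\star\}}$ sends the summand $W_{S'\disjoint\{\star\}}$ to $W_{g(S')\disjoint\{\star\}}$, which under the identification with $M(SW)$ corresponds precisely to the action of $g$ on the summand $(SW)_{S'}$ of $M(SW)_T$. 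Thus the isomorphism commutes with all FI-morphisms.

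The only potential subtlety is the identification of $\im\iSV$ inside $M(W)_{T\disjoint\{\star\}}$ with the subsum $\bigoplus_{S\subset T, \star\notin S} W_S$; this is the main (albeit routine) point, and it follows directly from the functoriality of the formula $M(W)_T = \bigoplus_{S\subset T}W_S$ applied to the inclusion $\iset_T$. Everything else is bookkeeping. No deeper input about derived functors or combinatorics is needed, since both sides are computed explicitly from their defining colimit-/coequalizer-style formulas.
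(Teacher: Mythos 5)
Your proposal is correct and follows essentially the same argument as the paper: both identify $(SM(W))_T=\bigoplus_{S\subset T\disjoint\{\star\}}W_S$ with $\iSV$ as the inclusion of the summands not containing $\star$, so the cokernel is the sum over summands containing $\star$, which reindexes to $M(SW)_T$, and then check compatibility with morphisms. The only cosmetic difference is that the paper first notes there is an automatic natural transformation $M\circ S\to D\circ M$ and checks it is a pointwise isomorphism, whereas you build the isomorphism directly and verify naturality in $W$ and $T$ by hand; both routes are fine.
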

\begin{proof} There is automatically a natural transformation $M\circ S\to D\circ M$. It would suffice to check that this is an isomorphism on free FB-modules, but it will be no more difficult to check this on arbitrary FB-modules $W$. From the formula \eqref{eq:MW} for $M(W)$ we see that $(SM(W))_T=\bigoplus_{S\subset T\disjoint\{\star\}}W_S$, with $\iSV\colon M(W)\to SM(W)$ the inclusion of those summands with $\star\not\in S$. (Incidentally, this shows that free FI-modules are torsion-free.) It follows that
\[(DM(W))_T=\bigoplus_{\substack{S\subset T\disjoint\{\star\}\\ \star\in S}}W_S=\bigoplus_{R\subset T}W_{R\disjoint \{\star\}}=\bigoplus_{R\subset T}(SW)_R=M(SW)_T,\] as claimed (for the second equality we reindex by $S=R\disjoint \{\star\}$). It is straightforward to check that this identification agrees on morphisms as well.
\end{proof}

\begin{corollary}
\label{cor:freeacyclic}
Free FI-modules are $D^a$-acyclic for all $a\geq 1$.\end{corollary}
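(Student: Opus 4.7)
The plan is to compute the left derived functors $H_p^{D^a}(M(W))$ directly using a projective resolution of $W$, and then transport everything across the identity $D\circ M = M\circ S$ to reduce the computation to something manifestly exact.

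First I would iterate \autoref{lemma:DMMS} to obtain the identification
\[D^a\circ M \;\cong\; M\circ S^a \colon \FBMod\to \FIMod\]
by a short induction: assuming $D^{a-1}\circ M \cong M\circ S^{a-1}$, we get $D^a\circ M = D\circ (D^{a-1}\circ M) \cong D\circ M\circ S^{a-1} \cong M\circ S\circ S^{a-1} = M\circ S^a$, using naturality of the isomorphism from \autoref{lemma:DMMS}.

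Next, given an FB-module $W$, I would pick a projective resolution $P_\bullet\twoheadrightarrow W$ in $\FBMod$ (such resolutions exist, since $\FBMod$ is just a product of module categories $\Z[S_n]\dMod$). The functor $M\colon \FBMod\to \FIMod$ is exact, and moreover it sends projective FB-modules to projective FI-modules: a projective FB-module is a summand of a sum of $\Z[S_m]$'s, so $M$ of it is a summand of a sum of $M(m)$'s, which is projective in $\FIMod$. Therefore $M(P_\bullet)\twoheadrightarrow M(W)$ is a projective resolution, so the derived functors of $D^a$ at $M(W)$ may be computed as
\[H_p^{D^a}(M(W)) \;=\; H_p\bigl(D^a\, M(P_\bullet)\bigr).\]

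Applying the identification from the first step, this equals $H_p\bigl(M(S^a P_\bullet)\bigr)$. But $S$ is an exact functor on $\FBMod$, so $S^a P_\bullet$ is a resolution of $S^a W$; and $M$ is exact, so $M(S^a P_\bullet)$ is a resolution of $M(S^a W)$. Hence its homology vanishes in positive degrees, giving $H_p^{D^a}(M(W)) = 0$ for all $p\geq 1$ as required.

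There is no serious obstacle: the only mild subtlety is to confirm that the natural isomorphism of \autoref{lemma:DMMS} iterates cleanly to $D^a\circ M \cong M\circ S^a$ and that projective FB-modules go to projective FI-modules under $M$, both of which are formal.
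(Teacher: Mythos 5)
Your proposal is correct and is essentially the paper's own argument: the paper likewise computes $H_p^{D^a}(M(W))\iso H_p^{D^a\circ M}(W)$ using that $M$ is exact and takes projectives to projectives, then identifies $D^a\circ M$ with the exact functor $M\circ S^a$ via \autoref{lemma:DMMS} to conclude the vanishing. Your version merely unwinds the derived-functor identification explicitly via a projective resolution in $\FBMod$, which is the same reasoning in more detail.
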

\begin{proof}
Just as in the proof of \autoref{lemma:freeHacyclic}, we have $H_p^{D^a}(M(W))\iso H_p^{D^a\circ M}(W)$. However \autoref{lemma:DMMS} implies that $D^a\circ M=M\circ S^a$. This is exact since both $M$ and $S$ are, so $H_p^{M\circ S^a}=0$ for all $p>0$.
\end{proof}

\begin{prop}
\label{pr:da}
If $V$ is an FI-module generated in degree $\leq k$, then $D^a V = 0$ for all $a > k$.  On the other hand,
if $\deg D^a V \leq m$ for some $m\geq -1$, then $V$ is generated in degree $\leq m+a$.
\end{prop}

\begin{proof}If $V$ is generated in degree $\leq k$, there is a surjection $M(V_{\leq k})\onto V$. Since $D^a$ is right-exact, we have a surjection $D^a M(V_{\leq k})\onto D^a V$ for any $a$. By \autoref{lemma:DMMS}, $D^a M(V_{\leq k})\iso M(S^a V_{\leq k})$. However, $S^aV_{\leq k}=0$ when $a>k$, since $(S^aV_{\leq k})_R=(V_{\leq k})_{R\disjoint [\star_a]}=0$. Therefore $D^a V=0$ for $a>k$.

For the second claim, to say that $\deg D^a V\leq m$ means that $(D^a V)_T=0$ whenever $\abs{T}>m$. The formula \eqref{eq:dkdisc} for $(D^aV)_T$ shows that the defining surjection $V_{T\disjoint [\star_a]}\onto H_0(V)_{T\disjoint [\star_a]}$ factors through $(D^aV)_T\onto H_0(V)_{T\disjoint [\star_a]}$, so it follows that $H_0(V)_R=0$ whenever $\abs{R}>m+a$. In other words, $V$ is generated in degree $\leq m+a$.
\end{proof}

\para{The derived functors of $D$}
We can now establish the basic properties of the derived functors $H_p^D$ of the derivative $D$.
\pagebreak

\begin{lemma}Let $W$ be an FI-module.
\label{lemma:Dprops}
\begin{enumerate}[label={\normalfont (\roman*)}]
\item \label{item:H1Dseq} The derived functor $H_1^D$ coincides with $K$, so there is a natural exact sequence \[0\to H_1^D(W)\to W\to SW\to DW\to 0.\]
\item \label{item:torsionfree} $W$ is torsion-free if and only if $H_1^D(W)=0$.
\item \label{item:Ddim1} $H_p^D=0$ for all $p>1$.
\item \label{item:Dprojective} $D$ takes projective FI-modules to projective FI-modules. 
\item \label{item:Ddegree} If $Y$ is an FI-module of finite degree, then $\deg DY\leq \deg Y-1$ and $\deg H_1^D(Y)\leq \deg Y$.
\end{enumerate}
\end{lemma}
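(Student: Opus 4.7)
My strategy is to establish (i) first, since identifying the left-derived functor $H_1^D$ with the kernel functor $K$ makes the remaining parts essentially formal. The crucial technical input is that every projective FI-module is torsion-free. The principal projective $M(m)=\Z[\Hom_{\FI}([m],-)]$ has basis on $T$ given by injections $[m]\into T$, and any injection $f\colon T\into T'$ acts on this basis by post-composition, which is visibly injective; this persists under arbitrary direct sums and descends to summands, so every projective FI-module $P$ satisfies $KP=0$ (equivalently, is torsion-free).

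To prove (i), I would take any $W$, choose a short exact sequence $0\to A\to P\to W\to 0$ with $P$ projective, and apply the snake lemma to the commutative diagram
\[\begin{CD}
0 @>>> A @>>> P @>>> W @>>> 0 \\
@. @VVV @VVV @VVV @. \\
0 @>>> SA @>>> SP @>>> SW @>>> 0
\end{CD}\]
whose rows are exact (since $S$ is) and whose vertical arrows are the components of $\iSV\colon\id\to S$. Using $KP=0$, the resulting six-term sequence collapses to $0\to KW\to DA\to DP\to DW\to 0$. On the other hand, the long exact sequence of left-derived functors of $D$ applied to the presentation, combined with $H_1^D(P)=0$, reads $0\to H_1^D(W)\to DA\to DP\to DW\to 0$. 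Comparing the two identifies $H_1^D(W)\iso KW$ functorially, and the four-term exact sequence in the statement is then just the definitions of $K$ and $D$.

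The remaining parts follow readily. Part (ii) is immediate from (i) together with \autoref{lem:torsion-freeK}. For (iii) I would argue by induction on $p\geq 2$: retaining the presentation $0\to A\to P\to W\to 0$, the submodule $A\subset P$ is torsion-free (as a subobject of a torsion-free FI-module), so $H_1^D(A)=0$ by (ii); the long exact sequence then yields $H_2^D(W)\iso H_1^D(A)=0$, and the inductive step uses $H_p^D(W)\iso H_{p-1}^D(A)$ (from the same long exact sequence together with $H_p^D(P)=0$ for $p\geq 1$) to reduce to the inductive hypothesis applied to $A$. For (iv), \autoref{lemma:DMMS} gives $D\circ M=M\circ S$, so $D$ sends free FI-modules to free FI-modules; being additive it then preserves summands of direct sums of principal projectives, i.e.\ projectives. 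For (v), inspect the defining sequence $0\to KY\to Y\to SY\to DY\to 0$: if $\deg Y\leq m$ then $(SY)_T=Y_{T\disjoint\{\star\}}$ vanishes when $\abs{T}>m-1$, so $\deg DY\leq m-1$, while $(KY)_T\subset Y_T$ vanishes when $\abs{T}>m$, so $\deg KY\leq m$.

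The main obstacle is really just part (i), specifically the snake-lemma identification $H_1^D=K$; the prerequisite torsion-freeness of projectives is an easy but essential checkpoint. Once (i) is in hand, the remaining parts follow from standard long-exact-sequence arguments together with the explicit description of $D$ on free FI-modules given by \autoref{lemma:DMMS}.
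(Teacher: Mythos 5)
Your overall structure is sound and, for parts (i)--(iii) and (v), essentially parallels the paper's proof: the paper also identifies $H_1^D$ with $K$ via a snake-lemma argument applied to a presentation $0\to V\to M\to W\to 0$ (with $M$ free rather than projective, using \autoref{cor:freeacyclic} for acyclicity and torsion-freeness of free modules), proves (iii) by the same dimension-shifting observation that the first syzygy is torsion-free, and proves (v) by the same inspection of the four-term sequence. Your substitution of a projective presentation for a free one is legitimate because you supply the needed input yourself: you check that the principal projectives $M(m)=\Z[\Hom_{\FI}([m],-)]$ are torsion-free and that this passes to direct sums and summands, and acyclicity of projectives is automatic. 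The only cosmetic difference in (i) is that you run the snake lemma on the presentation and its shift and then compare with the derived-functor long exact sequence, whereas the paper runs it on the rows $V\to SV\to DV\to 0$ and $0\to M\to SM\to DM\to 0$ and gets the four-term sequence in one step; both are fine (your claim of functoriality deserves a word about independence of the chosen presentation, but that is standard).

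There is, however, a genuine gap in your part (iv). You argue that $D\circ M=M\circ S$ shows $D$ sends free FI-modules to free FI-modules and then conclude that $D$ preserves projectives because projectives are summands of direct sums of the $M(m)$. But in this category \emph{free does not imply projective}: the paper explicitly warns that $M(W)$ is projective only when the underlying FB-module $W$ consists of projective $\Z[S_n]$-modules (e.g.\ the free FI-module on the trivial $\Z[S_2]$-module is not projective). So knowing that $DM(m)\iso M(S\Z[S_m])$ is free does not yet give projectivity, and your sentence ``being additive it then preserves summands of direct sums of principal projectives'' presupposes exactly what is missing, namely that $DM(m)$ is itself (a direct sum of) principal projectives. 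The fix is the computation the paper carries out: the shifted FB-module $S\Z[S_m]$ is the restriction of the regular representation $\Z[S_m]$ to $S_{m-1}$, which is a free $\Z[S_{m-1}]$-module of rank $m$, so
\[DM(m)\iso M(S\Z[S_m])\iso M\Big(\bigoplus_{i=1}^m \Z[S_{m-1}]\Big)=\bigoplus_{i=1}^m M(m-1),\]
which is projective; then $D$ of an arbitrary projective is a summand of a direct sum of such modules and hence projective. With that one computation added, your proof of (iv) — and the proposal as a whole — is complete.
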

\begin{proof} Given $W$, let $M$ be a free FI-module with $M\onto W$; for instance, we may take the universal $M=M(W)\onto W$. Let $V$ be the kernel of this surjection, so we have $0\to V\to M\to W\to 0$. Since $M$ is free, $H_1^D(M)=0$ by \autoref{cor:freeacyclic}, so we have an isomorphism $H_1^D(W)\iso \ker(DV\to DM)$. 

\ref{item:H1Dseq} The key properties are that $S$ is exact and that $M\xrightarrow{\iSV} SM$ is injective, i.e.\ that free FI-modules are torsion-free (which we saw in the proof of \autoref{lemma:DMMS}). 
Thus we have a diagram
\[\xymatrix{
 & V \ar[r] \ar[d] & SV \ar[r] \ar[d] & DV \ar[r] \ar[d] & 0 \\
0 \ar[r] & M \ar[r]  & SM \ar[r]  & DM \ar[r]  & 0 }\]
Applying the snake lemma, we obtain the desired exact sequence
\[\ker(SV\to SM)=0\to H_1^D(W)\to W\to SW\to DW\to 0.\]
In particular, this identifies $H_1^D(W)$ with $KW=\ker(W\to SW)$.

\ref{item:torsionfree} Given that $H_1^D=K$, this is the statement of \autoref{lem:torsion-freeK}.

\ref{item:Ddim1} Since $M$ is $D$-acyclic, we have $H_2^D(W)\iso H_1^D(V)$. The FI-module $V$ is torsion-free, being a submodule of $M$, so $H_1^D(V)=0$ by \ref{item:torsionfree}. Since $W$ was arbitrary, this proves that $H_2^D=0$, which implies that $H_p^D=0$ for all $p>1$.

\ref{item:Dprojective} Since projective FI-modules are summands of $\bigoplus M(m_i)$, it suffices to prove this for $M(m)=M(\Z[S_m])$. \autoref{lemma:DMMS} states that \[DM(m)=DM(\Z[S_m])\iso M(S\Z[S_m])\iso M(\bigoplus_{i=1}^m \Z[S_{m-1}])=\bigoplus_{i=1}^m M(m-1)\] which is indeed projective.

\ref{item:Ddegree} It is clear that $\deg SY=\deg Y-1$, since $(SY)_n=Y_{[n]\disjoint[\star]}\iso Y_{n+1}$ (unless $\deg Y=0$, when $\deg SY=-\infty$). Both claims now follow from \ref{item:H1Dseq}, the first from the surjection $SY\onto DY$ and  the second  from the injection $H_1^D(Y)\into Y$.
\end{proof}

\subsection{Proof of \autoref{main:regularity}}
\label{sec:thmAproof}
We now have in place all the tools we need to prove our main theorems bounding the degree of homology of FI-modules.
The key technical result is \autoref{main:saturation}, together with \autoref{lem:DaVM} and \autoref{rem:saturationreinterpreted} establishing a connection between its conclusion and $D^a$.
\begin{theorem}
\label{thm:Dkhomology}
Let $W$ be an FI-module generated in degree $\leq k$ and related in degree $\leq d$, and let $N \coloneq d+\min(k,d)-1$.  For all $a\geq 1$ and all $p\geq 1$, 
\begin{equation}
\tag{$\ast^a_p$}
\label{eq:ap}
\deg H_p^{D^a}(W) \leq N - a + p.
\end{equation}
\end{theorem}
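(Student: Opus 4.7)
The plan is to prove the bound $(*^a_p)$ by induction on $a$. For the base case $a = 1$: when $p \geq 2$, Lemma~\ref{lemma:Dprops}\ref{item:Ddim1} gives $H_p^D = 0$ so the bound is trivial; when $p = 1$, applying the long exact sequence for $D^a$ to the presentation $0 \to V \to M \to W \to 0$ and using that $M$ is $D^a$-acyclic (\autoref{cor:freeacyclic}) yields $H_1^{D^a}(W) \cong \ker(D^a V \to D^a M)$, and \autoref{rem:saturationreinterpreted} (a direct consequence of \autoref{main:saturation}) bounds this kernel by $d + \min(k,d) - a = N - a + 1$. The same reasoning actually establishes $(*^a_1)$ for every $a \geq 1$ in one stroke, and in particular gives the $a = p = 1$ estimate $\deg KW \leq N$ that we will need later.

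For the inductive step, assuming $(*^a_p)$ holds for all $p \geq 1$ and proving $(*^{a+1}_p)$, I would write $D^{a+1} = D^a \circ D$ and invoke the Grothendieck spectral sequence
\[E_2^{i,j} = H_i^{D^a}\bigl(L_j D(W)\bigr) \Longrightarrow H_{i+j}^{D^{a+1}}(W),\]
which applies because $D$ preserves projectives (Lemma~\ref{lemma:Dprops}\ref{item:Dprojective}) and projectives are $D^a$-acyclic. Only the rows $j = 0, 1$ survive, by Lemma~\ref{lemma:Dprops}\ref{item:Ddim1}, and they compute $DW$ and $KW$ respectively (Lemma~\ref{lemma:Dprops}\ref{item:H1Dseq}). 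The differentials $d_r$ for $r \geq 3$ land outside the two-row strip, so the sequence degenerates at $E_3$ and produces a two-step filtration on $H_p^{D^{a+1}}(W)$ whose graded pieces are subquotients of $H_p^{D^a}(DW)$ and $H_{p-1}^{D^a}(KW)$; in particular
\[\deg H_p^{D^{a+1}}(W) \leq \max\bigl(\deg H_p^{D^a}(DW),\ \deg H_{p-1}^{D^a}(KW)\bigr).\]

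To bound the first term, I would apply $D$ to the presentation $0 \to V \to M \to W \to 0$; since $V \subset M$ is torsion-free, \autoref{lem:torsion-freeK} gives $H_1^D(V) = H_1^D(M) = 0$, so the sequence $0 \to KW \to DV \to DM \to DW \to 0$ is exact and presents $DW$ with $DM$ free of generation degree $\leq k - 1$ (\autoref{lemma:DMMS}) and the image of $DV$ generated in degree $\leq d-1$. The constants of $DW$ are therefore $(k-1, d-1)$ and the associated $N$ drops by $2$; the inductive hypothesis then gives $\deg H_p^{D^a}(DW) \leq (N-2) - a + p$. For the second term $KW$ there is no obvious presentation, but we already know $\deg KW \leq N$, so I propose a parallel auxiliary lemma valid for any FI-module $Y$ of finite degree: $\deg H_p^{D^a}(Y) \leq \deg Y - a + p$. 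This auxiliary lemma is proved by the very same induction on $a$, with Lemma~\ref{lemma:Dprops}\ref{item:Ddegree} providing the base. Applied to $Y = KW$ it gives $\deg H_{p-1}^{D^a}(KW) \leq N - a + p - 1$, and the two bounds combine to $\deg H_p^{D^{a+1}}(W) \leq N - (a+1) + p$, closing the induction. The main obstacle will be bookkeeping the two parallel inductions (the main statement and the torsion lemma) and verifying the acyclicity hypothesis for the Grothendieck spectral sequence in each application; the identification of $DW$ as being presented in the claimed degrees also takes a moment, since one must see that the image of $DV$ inside the free $DM$ plays the role of relations. Edge cases where $k = 0$ or $d = 0$ are essentially trivial because $DM$ or $DV$ then vanishes.
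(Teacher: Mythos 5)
Your proposal is correct, but the induction is organized differently from the paper's. Both arguments share the same irreducible core: the case $p=1$ for all $a\geq 1$, obtained by applying $D^a$ to a presentation $0\to V\to M\to W\to 0$, using that free modules are $D^a$-acyclic (\autoref{cor:freeacyclic}) to get $H_1^{D^a}(W)\iso\ker(D^aV\to D^aM)$, and then invoking \autoref{main:saturation} via \autoref{rem:saturationreinterpreted}; this part of your write-up matches the paper exactly. The difference is in how one propagates from there. The paper peels the derivative off the \emph{outside}, writing $D^a=D\circ D^{a-1}$: since $H_p^D=0$ for $p>1$, the Grothendieck spectral sequence collapses to short exact sequences $0\to DH_p^{D^{a-1}}(W)\to H_p^{D^a}(W)\to H_1^D(H_{p-1}^{D^{a-1}}W)\to 0$, and \autoref{lemma:Dprops}(v) applied to the FI-modules $H_p^{D^{a-1}}(W)$ and $H_{p-1}^{D^{a-1}}(W)$ immediately gives $(\ast^{a-1}_p)+(\ast^{a-1}_{p-1})\Rightarrow(\ast^a_p)$, with no need to understand how $D$ affects generators and relations. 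You instead peel $D$ off the \emph{inside}, $D^{a+1}=D^a\circ D$, getting a two-row spectral sequence bounded by $\max\bigl(\deg H_p^{D^a}(DW),\,\deg H_{p-1}^{D^a}(KW)\bigr)$; this forces you to (a) check that $DW$ is generated in degree $\leq k-1$ and related in degree $\leq d-1$ (which does follow from \autoref{lemma:DMMS} and right-exactness of $D$, since the image of $DV$ in the free module $DM$ serves as relations, with the degenerate cases $k=0$ or $d=0$ handled as you say), so that the universally quantified inductive hypothesis applies with $N$ replaced by $N-2$, and (b) supply the auxiliary bound $\deg H_q^{D^a}(Y)\leq\deg Y-a+q$ for finite-degree $Y$, applied to $Y=KW$ with $\deg KW\leq N$ coming from $(\ast^1_1)$. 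That auxiliary lemma is indeed true and is proved by the same two-row spectral sequence together with \autoref{lemma:Dprops}(v), with no circularity since it never invokes the main statement. The trade-off: the paper's ordering makes the bookkeeping essentially one line per step, whereas yours requires the extra presentation analysis of $DW$ and the parallel lemma; on the other hand your route isolates a reusable general fact about $H_q^{D^a}$ of finite-degree FI-modules and keeps the induction on a single parameter $a$ with the module varying, which some readers may find more transparent. Either way the final bound $N-a+p$ comes out the same.
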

\begin{proof}
We will reduce by induction to the case when $a=1$ or $p=1$. To accomplish this reduction, we prove that $(\ast^{a-1}_p) + (\ast^{a-1}_{p-1})\implies (\ast^a_p)$ for any $a\geq 2$ and $p\geq 2$.

Fix $a\geq 2$ and $p\geq 2$.
By  \autoref{lemma:Dprops}\ref{item:Dprojective},  $D^a$ takes projective FI-modules to projective FI-modules, so we may compute the left derived functors of $D^a$ by means of the Grothendieck spectral sequence applied to the composition $D \circ D^{a-1}$. Thanks to the vanishing of $H_p^D$ for $p > 1$ from  \autoref{lemma:Dprops}\ref{item:Ddim1}, this spectral sequence has only two nonzero columns, so it degenerates to the short exact sequences
\begin{equation}
\label{eq:spectral}
0\to D H_p^{D^{a-1}}(W) \ra H_p^{D^a}(W) \ra  H_1^D(H_{p-1}^{D^{a-1}} W)\to 0.
\end{equation}

The assertions $(\ast^{a-1}_p)$ and $(\ast^{a-1}_{p-1})$ state respectively that
\begin{align*}
\deg H_p^{D^{a-1}}(W) &\leq N- (a-1) + p\phantom{(p-1)} =N-a+p+1\\
\deg H_{p-1}^{D^{a-1}}(W) &\leq N- (a-1) + (p-1)\phantom{p} =N-a+p
\end{align*}
\autoref{lemma:Dprops}\ref{item:Ddegree} tells us that $\deg DY\leq \deg Y-1$ and $\deg H_1^D(Y)\leq \deg Y$, so these bounds imply:
\begin{align*}
\deg D H_p^{D^{a-1}}(W) 
&\leq N-a+p\\
\deg H_1^D(H_{p-1}^{D^{a-1}}(W)) 
&\leq N-a+p
\end{align*}
The short exact sequence \eqref{eq:spectral} now implies
\[
\deg H_p^{D^a}(W) \leq N - a + p
\]
which is precisely the assertion $(\ast^a_p)$. 
This concludes the proof that $(\ast^{a-1}_p) + (\ast^{a-1}_{p-1})\implies (\ast^a_p)$ for any $a\geq 2$ and $p\geq 2$.

Given this implication, it suffices to prove directly that $(\ast^a_p)$ holds when either $a=1$ or $p=1$, since all remaining cases with $a\geq 2$ and $p\geq 2$ then follow by induction. When $a=1$ and $p\geq 2$, we have $H_p^D(W)=0$ by \autoref{lemma:Dprops}\ref{item:Ddim1}, so $\deg H_p^D(W)=-\infty$ and the bound $(\ast^a_p)$ certainly holds. What remains as the unavoidable core of the problem is  the bound \eqref{eq:ap} when $p=1$, namely that $\deg H_1^{D^a}(W)\leq N-a+1$ for all $a\geq 1$.

To compute $H_1^{D^a}(W)$, consider a presentation $0 \ra V \ra M \ra W \ra 0$ as in \autoref{def:genrel}, with $M$ free and generated in degree $\leq k$ and $V$ generated in degree $\leq d$.
 Since $M$ is free, it is $D^a$-acyclic by \autoref{cor:freeacyclic}, so
\begin{displaymath}
H_1^{D^a}(W) \iso \ker\big(D^a V\to D^a M\big).
\end{displaymath}
But recall from \autoref{rem:saturationreinterpreted} that the conclusion of \autoref{main:saturation} can be restated as a claim about the map $D^a V\to D^aM$ and its kernel! Specifically,  the conclusion of \autoref{main:saturation} says for any $a\geq 1$ that \[\quad\quad\ker(D^a V\to D^a M)_{n-a} =0\quad\text{ for all $n>d+\min(k,d)$},\] or in other words that
\[\deg \ker(D^a V\to D^a M)\leq d+\min(k,d)-a=N+1-a.\] This means that the conclusion of \autoref{main:saturation} applied to the relations $V\subset M$ is precisely the claim \eqref{eq:ap} for $p=1$ and all $a\geq 1$. As explained above, all other cases now follow by induction.
\end{proof}

\begin{proof}[Proof of \autoref{main:regularity}]
Fix $k'\geq 0$ and $d\geq 0$, and let $U$ be an FI-module with $\deg H_0(U)\leq k'$ and $\deg H_1(U)\leq d$. Our goal is to prove that $\deg H_p(U)-p\leq k'+d-1$ for all $p>0$.

We first reduce to the case when $k'<d$. Let $k\coloneq\min(k',d-1)$ and define $W$ to be the submodule $W\coloneq U_{\langle \leq k\rangle}$. In the most common case when $k'<d$, this has no effect: we have $k=k'$ and $W=U$. In the other case when $k'\geq d$, we have $\deg H_1(U)\leq d=k+1$, so \autoref{pr:ramosCE} states that $H_p(W)\iso H_p(U)$ for all $p>0$. Since $k\leq k'$ in either case, to prove the theorem it suffices to prove that \[\deg H_p(W)-p\leq k+d-1\qquad\text{ for all $p>0$.}\] For the rest of the proof, we discard the FI-module $U$ and work only with $W$, which has $\deg H_0(W)\leq k$ and $\deg H_1(W)\leq d$ with $k<d$.

Given these bounds,
\autoref{pr:genrel} tells us that $W$ is generated in degree $\leq k$ and related in degree $\leq \max(k,d)=d$.
Therefore there exists a surjection $M\onto W$ from a free FI-module $M$ generated in degree $\leq k$, whose kernel is generated in degree $\leq d$. Set $M_0\coloneq M$ and extend this to a resolution of $W$ by free FI-modules
\begin{displaymath}
\ldots \ra M_2 \ra M_1 \ra M_0 \ra W \ra 0.
\end{displaymath}

For each $p > 0$, let $X_p$ be the $p$th syzygy of $W$, namely $X_p\coloneq \im(M_p\to M_{p-1})\iso\ker(M_{p-1}\to M_{p-2})$. Let us assume that this resolution is {\em minimal} in the very weak sense that $\deg H_0(X_p)=\deg H_0(M_p)$ for all $p> 0$. 
(The existence of such a resolution is a consequence of the fact that every FI-module $V$ generated in degree $\leq k$ admits a surjection from a free FI-module generated in degree $\leq k$, namely $M(V_{\leq k})$ as discussed in \autoref{pr:da}.)  Set $X_0\coloneq W$. 

For all $p\geq 1$ we have an exact sequence
\begin{equation}
\label{eq:syzygy}
0 \ra X_{p} \ra M_{p-1} \ra X_{p-1} \ra 0.
\end{equation} Since the $M_i$ are $H_0$-acyclic by \autoref{cor:freeacyclic}, applying $H_0$ to \eqref{eq:syzygy} gives $H_i(X_p)\iso H_{i+1}(X_{p-1})$ for all $i\geq 1$; iterating, we obtain $H_p(W)\iso H_1(X_{p-1})$. Similarly, $H_p^{D^a}(W)\iso H_1^{D^a}(X_{p-1})$ for any $a\geq 1$.

Let us write \[N\coloneq d+k-1,\] so our eventual goal is to prove that $\deg H_p(W)\leq N+p$ for all $p\geq 1$.
Before that, we  prove that for all $p\geq 1$,
\begin{equation}
\label{eq:ih}
\deg H_0(X_p) \leq N+p.
\end{equation}

By construction $X_1=\ker(M\to W)$; by our hypothesis, $X_1$ is generated in degree $\leq d$,  so $\deg H_0(X_1)\leq d\leq d+k=N+1$. This proves \eqref{eq:ih} for $p=1$; we proceed by induction on $p$.

Fix $p\geq 2$, and assume by induction that \eqref{eq:ih} holds for $p-1$, i.e.\ that $\deg H_0(X_{p-1}) \leq N+p-1$. By minimality of the resolution, $\deg H_0(M_{p-1}) = \deg H_0(X_{p-1})$, so $M_{p-1}$ is generated in degree $\leq N+p-1$. By \autoref{pr:da}, this implies that $D^{N+p} M_{p-1} = 0$. Then applying $D^{N+p}$ to \eqref{eq:syzygy} yields a long exact sequence containing the segment:
\begin{displaymath}
H_1^{D^{N+p}}(X_{p-1}) \ra D^{N+p} X_p \ra  0=D^{N+p} M_{p-1}
\end{displaymath}

This shows that $D^{N+p} X_p$ is a quotient of $H_1^{D^{N+p}}(X_{p-1})\iso H_p^{D^{N+p}}(W)$. We proved in \autoref{thm:Dkhomology} that
\begin{displaymath}
\deg H_p^{D^{N+p}}(W) \leq N - (N+p) + p = 0,
\end{displaymath}
so $\deg D^{N+p} X_p \leq 0$. (The statement of \autoref{thm:Dkhomology} has $d+\min(k,d)-1$, but this coincides with $N=d+k-1$ since $k<d$.)  By \autoref{pr:da}, this implies that $X_p$ is generated in degree at most $N + p$, which is the result to be proved. This concludes the proof of \eqref{eq:ih}.

We saw above that \eqref{eq:syzygy} implies $H_i(X_p)\iso H_{i+1}(X_{p-1})$ for $i\geq 1$. To complete the proof of the theorem, we consider the segment of the long exact sequence involving $i=0$:
\begin{displaymath}
0 = H_1(M_{p-1}) \ra H_1(X_{p-1}) \ra H_0(X_p) \ra H_0(M_{p-1}) \ra H_0(X_{p-1})
\end{displaymath}
This shows that $H_p(W)\iso H_1(X_{p-1})$ injects into $H_0(X_p)$ for all $p>0$. We proved in \eqref{eq:ih} that $\deg H_0(X_p)\leq N+p$, so we conclude that $\deg H_p(W)\leq N+p$ for all $p>0$, as desired.
\end{proof}

\section{Application to homology of congruence subgroups}

\subsection{A complex computing \texorpdfstring{$H_i(V)$}{H-i(V)}}

For any category $\C$, let $\C\dMod$ denote the category of functors $\C\to \ZMod$. Given $V\in \C\dMod$ and $W\in \C^{\op}\dMod$, their tensor product over $\C$ is an abelian group $V\otimes_\C W$. It can be defined as the largest quotient of \[\bigoplus_{X\in \Ob \C} V(X)\otimes_\Z W(X)\] in which \[v_X\otimes f^*(w_Y)\in V(X)\otimes W(X)\quad\text{ is identified with }\quad f_*(v_X)\otimes w_Y\in V(Y)\otimes W(Y)\] for all $X,Y\in \Ob\C$, $v_X\in V(X)$, $w_Y\in W(Y)$, and $f\in\Hom_{\C}(X,Y)$.\footnote{The reader may recognize this as an example of a \emph{coend}: given $V$ and $W$ we can define a functor $V\boxtimes W\colon \C\times \C^{\op}\to \ZMod$; then $V\otimes_\C W$ coincides with the coend $\int^C V\boxtimes W$, and the quotient construction above is just the standard coequalizer formula for a coend.}

In this paper we will be interested in the tensor product of an FI-module $V$ and co-FI-module $W$. This can be described explicitly as follows.
\begin{definition}
\label{de:fitensor}
Given $V\in \FIMod$ and $W\in \FI^{\op}\dMod$, the abelian group $V\otimes_{\FI} W$ is defined by:
\begin{align*}
V\otimes_{\FI} W&=
 \big(\bigoplus_{T\in \Ob \FI} V_T\otimes_\Z W_T\big)/\big\langle f_*(v_S)\otimes w_T\equiv v_S\otimes f^*(w_T) \,\big|\,f\colon S\into T\big\rangle\\
 &=\big(\bigoplus_{n\geq 0} V_n\otimes_{\Z S_n} W_n\big)/\big\langle f_*(v_n)\otimes w_{n+1}\equiv v_n\otimes f^*(w_{n+1}) \,\big|\,f\colon [n]\into [n+1]\big\rangle
\end{align*}
\end{definition}

We think of an FI-module $V\in \FIMod$ as a ``right module over FI'', and a co-FI-module $W\in \FIopMod$ as a ``left module over FI''. This is consistent with our notation $V\otimes_{\FI}W$ for the tensor. Moreover, if $W$ is an $\FI^{\op}\times \FI$-module, we will say that $W$ is an \emph{$\FI$-bimodule}; in this case $V\otimes_{\FI}W$ is not just an abelian group, but in fact an FI-module. This is familiar from the analogous situation with $R$-modules: the tensor of a right $R$-module with an $R$-bimodule is a right $R$-module. To verify the claim in this setting, just note that \[(\FI^{\op}\times \FI)\dMod=[\FI^{\op}\times \FI,\ZMod]=[\FI,[\FI^{\op},\ZMod]]=[\FI,\FIopMod].\] In other words, we can think of an FI-bimodule $W$ as a functor from $\FI$ to $\FIopMod$; after tensoring with $V\in \FIMod$, we are left with a functor from $\FI$ to $\ZMod$, which is just an FI-module.

\begin{definition}
The $\FI^{\op}\times \FI$-module $K$ is defined on objects by $K(S,T)=\Z[\Bij(S,T)]$; in particular $K(S,T)=0$ if $\abs{S}\neq \abs{T}$. Given a morphism \[(f\colon S'\into S, g\colon T\into T')\text{\ \ in\ \ } \Hom_{\FI^{\op}\times \FI}\big((S,T),(S',T')\big),\] we consider two cases. If $f$ and $g$ are both bijective,  $K_{(f,g)}\colon K(S,T)\to K(S',T')$ is the map defined by $\Bij(S,T)\ni \varphi \mapsto g\circ \varphi \circ f\in \Bij(S',T')$. If either $f$ or $g$ is not bijective,  $K_{(f,g)}=0$.
\end{definition}

Since $K$ is an FI-bimodule, the tensor $V\otimes_{\FI} K$ is itself an FI-module. In fact, this FI-module is already familiar to us! To avoid confusion, in the remainder of the paper we will write $H_i^{\FI}(V)$ for the FI-homology of $V$, which was denoted simply by $H_i(V)$ in previous sections.
\begin{proposition}
\label{prop:TorK}
Given $V\in \FIMod$, the FI-module $V\otimes_{\FI} K$ is isomorphic to the FI-module $H^{\FI}_0(V)$ defined in the introduction. As a consequence, 
\[H^{\FI}_i(V)=\Tor^{\FI}_i(V,K)\quad \text{for any }i\geq 0.\]
\end{proposition}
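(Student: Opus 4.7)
The plan is to compute $V \otimes_{\FI} K$ explicitly and recognize it as $H_0^{\FI}(V)$, and then to appeal to the general principle that the left derived functors of a right-exact functor depend only on the functor itself. First I would fix a finite set $T'$ and compute the $T'$-component of the FI-module $V \otimes_{\FI} K$. Since $K(T,T') = 0$ unless $\abs{T} = \abs{T'}$, only summands with $\abs{T} = \abs{T'}$ contribute to $\bigoplus_T V_T \otimes_{\Z} K(T,T')$. Among the defining relations, those coming from bijections $f\colon T \iso T_1$ with $\abs{T_1} = \abs{T'}$ let me transport any $v \otimes \varphi \in V_T \otimes K(T,T')$ to $\varphi_*(v) \otimes \id_{T'} \in V_{T'} \otimes K(T',T')$; thus $(V \otimes_{\FI} K)_{T'}$ is a quotient of $V_{T'}$.

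Next I would identify the relations that survive. The key point is that for a non-bijective injection $f\colon S \into T'$, the contravariant action $f^*\colon K(T',T') \to K(S,T')$ is zero by the definition of $K$ on non-bijective morphisms. So the defining relation $f_*(v) \otimes \id_{T'} = v \otimes f^*(\id_{T'})$ forces $f_*(v) \otimes \id_{T'} = 0$ in $(V \otimes_{\FI} K)_{T'}$. Collecting these, the image of every map $f_*\colon V_S \to V_{T'}$ with $\abs{S} < \abs{T'}$ is killed, and conversely one checks nothing more is imposed. This produces a natural isomorphism $(V \otimes_{\FI} K)_{T'} \iso V_{T'}/\spn(\im f_* \mid f\colon S \into T',\ \abs{S}<\abs{T'}) = H_0^{\FI}(V)_{T'}$. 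A parallel argument, using that the covariant action $g_*$ on $K$ is zero for non-bijective $g$, shows non-bijective morphisms act by zero on $V \otimes_{\FI} K$, matching the FI-module structure on $H_0^{\FI}(V)$.

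For the consequence, I would argue formally. The functor $- \otimes_{\FI} K \colon \FIMod \to \FIMod$ is right-exact (tensor products are right-exact in each variable), and the first part exhibits a natural isomorphism $- \otimes_{\FI} K \iso H_0^{\FI}$. Therefore their left derived functors agree, and since $\Tor_i^{\FI}(-,K)$ is by definition the $i$-th left derived functor of $- \otimes_{\FI} K$ while $H_i^{\FI}$ is by definition the $i$-th left derived functor of $H_0^{\FI}$, we obtain $H_i^{\FI}(V) \iso \Tor_i^{\FI}(V,K)$ for all $i \geq 0$.

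The main obstacle is bookkeeping rather than difficulty: I must verify that the naive description of $(V \otimes_{\FI} K)_{T'}$ as a quotient of $V_{T'}$ really gives exactly the quotient defining $H_0^{\FI}(V)_{T'}$, and that the FI-module structure inherited from the bimodule $K$ coincides (under this identification) with the one on $H_0^{\FI}(V)$ coming from the quotient of $V$. Once that matching is verified morphism-by-morphism using the vanishing of $K$ on non-bijections in each variable, the passage to derived functors is immediate from general homological algebra.
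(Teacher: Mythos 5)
Your proposal is correct and follows essentially the same route as the paper: compute $V\otimes_{\FI}K$ directly from the defining relations, using that $K$ vanishes on non-bijections in each variable to see that the surviving relations kill exactly the images $f_*(V_S)$ for $\abs{S}<\abs{T'}$ (and that non-bijections act by zero), and then note that the identification of derived functors is formal. The only cosmetic difference is that you work pointwise at a fixed set $T'$ with the unskeletonized form of the tensor product, whereas the paper first identifies $\bigoplus_n V_n\otimes_{\Z S_n}K_n$ with $V$ and then imposes the relations; this changes nothing of substance.
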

\begin{proof}
\autoref{de:fitensor} presents $V\otimes_{\FI} K$ as a quotient of
\begin{displaymath}
\bigoplus_{n\geq 0} V_n\otimes_{\Z S_n} K_n,
\end{displaymath}
so we first identify the FI-module $V_n\otimes_{\Z S_n} K_n$. Since $K$ is not only a co-FI module but an FI-bimodule, $K_n$ is an $S_n\times \FI$-module: as an FI-module $K_n$ sends a set $T$ to $\Z[\Bij([n],T)]$, and the action of $S_n$ by precomposition commutes with this FI-module structure. Thus the FI-module 
$V_n\otimes_{\Z S_n} K_n$
sends $T$ to $V_T$ if $\abs{T}=n$, and to $0$ if $\abs{T}\neq n$. Passing to the direct sum, we find that $\bigoplus_{n\geq 0} V_n\otimes_{\Z S_n} K_n$ sends $T$ to $V_T$ for any finite set $T$ of any cardinality; in other words, the FI-module $\bigoplus_{n\geq 0} V_n\otimes_{\Z S_n} K_n$ can be identified with $V$ itself.

We now consider the relations: \autoref{de:fitensor} states that $V\otimes_{\FI} K$ is the quotient of
$V\simeq \bigoplus V_n\otimes_{\Z S_n} K_n$ by the relations
\[f_*(v_n)\otimes k_{n+1}\equiv v_n\otimes f^*(k_{n+1})\qquad \text{for all }f\colon [n]\into [n+1].\]
However, by definition $f^*$ acts as 0 on $K$ whenever $f$ is not bijective. Therefore these relations reduce to $f_*(v_n)\equiv 0$ for all $v_n\in V_n$ and $f\colon [n] \inj [n+1]$.  The quotient of $\bigoplus_n V_n$ by these relations is precisely $H^{\FI}_0(V)$ as we defined it in the introduction. The assertion that $H^{\FI}_i(V)=\Tor^{\FI}_i(V,K)$ is then tautological (but see Remarks~\ref{remark:Torbalanced} and \ref{remark:Torbimodule} for further discussion).\end{proof}

\begin{numberedremark}
\label{remark:Torbalanced}
The notation $\Tor^{\FI}_*(V,W)$ requires some justification, since this could denote the left-derived functors of $V\otimes_{\FI}-$ or of $-\otimes_{\FI}W$. Fortunately, the tensor product functor \[{-\otimes_{\FI}-}\colon \FIMod\times \FIopMod\to \ZMod\] is a left-balanced functor in the sense of \cite[Definition~2.7.7]{Weibel}, so by \cite[Exercise~2.7.4]{Weibel} its left-derived functors in the first variable and in the second variable coincide. In other words, these derived functors $\Tor^{\FI}_*(V,W)$ can be computed either from a resolution $V_\bullet$ of $V$ by projective FI-modules, or from a resolution $W_\bullet$ of $W$ by projective $\FIop$-modules, as we would expect.
\end{numberedremark}

\begin{numberedremark}
\label{remark:Torbimodule}
When $W$ is an FI-bimodule, $V\otimes_{\FI} W$ and thus $\Tor^{\FI}_*(V,W)$ are FI-modules, but there is one important point to make. We can compute the FI-module $\Tor^{\FI}_i(V,W)$ from a projective resolution $W_\bullet\to W$ of FI-bimodules, but in fact something much weaker suffices. We do not need the terms $W_i$ of this resolution to be \emph{projective} FI-bimodules; it suffices that each FI-bimodule $W_i$ be ``$\FIop$-projective'', meaning that for each finite set $T\in \Ob \FI$ the $\FIop$-module $(W_i)_T$ is a projective $\FIop$-module.

This is familiar from the situation of $R$-modules: if $M$ is a right $R$-module and $N$ is an $R\text{-}S$-bimodule, then to compute the $S$-modules $\Tor^R_*(M,N)$ from a resolution $N_\bullet\to N$ by $R\text{-}S$-bimodules, it suffices that each $N_i$ be projective (or even flat) as an $R$-module. The reason is that such an $R\text{-}S$-bimodule is acyclic for the functor $M\otimes_R-\colon R\text{-}S\dMod\to S\dMod$; the situation for FI-modules is the same.

The only projective $\FIop$-modules we will need to consider are the co-representable functors $\Z[\Inj(-,U)]=\Z[\Hom_{\FIop}(U,-)]$ for a fixed finite set $U$ (such co-representable functors are always projective). 
\end{numberedremark}

We may therefore describe $H^{\FI}_i(V)$ in a uniform way that applies to all FI-modules $V$ by finding an appropriate resolution $C_\bullet\to K$ of $\FIop$-projective FI-bimodules.

\para{A uniform construction of FI-complexes}
We will make use of the same construction in multiple places below, so we begin by describing this construction in a general context; we are grateful to the referee for suggesting this.

\begin{definition}
\label{def:twisted}
We denote by $\FIarrow$ the twisted arrow category whose objects are pairs $(T,U)$ where $T$ is a finite set and $U\subset T$ is a subset, and where a morphism from $(T,U)$ to $(T',U')$ is an injection $f\colon T\into T'$ such that $f(U)\supseteq U'$.
\end{definition}
Given an $\FIarrow$-module $F$, we will construct two chain complexes of FI-modules. In fact, for any functor $F$ from $\FIarrow$ to any abelian category $\AA$, we construct two chain complexes $\ch^F_\bullet$ and $\chT^F_\bullet$  taking values in $[\FI,\AA]$.

\begin{construction}[The complexes $\ch^F_\bullet$ and $\chT^F_\bullet$]
\label{const:chF}
Given a functor $F\colon \FIarrow\to\AA$, for each $k\geq 0$ define $\chT^F_k\colon \FI\to \AA$ by
\[\chT^F_k(T)=\bigoplus_{f: [k]\into T}F(T,\im f).\]
An FI-morphism $g\colon T\into T'$ defines for each $f\colon [k]\into T$ an $\FIarrow$-morphism $g\colon (T,\im f)\to (T',\im g\circ f)$, and $g_*\colon \chT^F_k(T)\to \chT^F_k(T')$ is given by the induced maps. 

Next, we define the boundary map $\partial\colon \chT^F_k\to \chT^F_{k-1}$. For $k\geq 1$ and $1\leq i\leq k$, let $\delta_i\colon [k-1]\into [k]$ be the ordered injection whose image does not contain $i$. For any $f\colon [k]\into T$, the identity $\id_T$ defines an $\FIarrow$-morphism from $(T,\im f)$ to $(T,\im f\circ \delta_i)$. Let $d_i\colon \chT^F_k\to \chT^F_{k-1}$ be the map induced on each factor by $\id_T\colon (T,\im f)\to(T,\im f\circ \delta_i)$; note that this commutes with the $\FI$-action $g_*$ defined above.

We define $\partial\colon \chT^F_k\to \chT^F_{k-1}$ by $\partial\coloneq \sum (-1)^i d_i$. The familiar formula $\delta_i\circ \delta_j=\delta_{j+1}\circ \delta_i$ for $i\leq j$ implies that  $d_j\circ d_i=d_i\circ d_{j+1}$ by the functoriality of $F$, so $\partial^2 =0$. Therefore the differential $\partial$ makes $\chT^F_\bullet$ a chain complex with values in $[\FI,\AA]$.

We  define the complex $\ch^F_\bullet$ as the quotient of $\chT^F_\bullet$ by the following relations. 
The permutations $\sigma\in S_k$ act on $\chT^F_k$ by precomposition, and breaking up into orbits we have \[\chT^F_k(T)=\bigoplus_{\substack{U\subset T\\\abs{U}=k}}\bigoplus_{f: [k]\iso U}F(T,U).\]
We define $\ch^F_k$ to be the quotient of $\chT^F_k$ by the relations $\sigma_*=(-1)^\sigma$ for all $\sigma\in S_k$; in other words, we pass to the quotient where $S_k$ acts by the sign representation. The functoriality of $F$ guarantees that $\ch^F_k$ is still a functor $\FI\to \AA$.

The individual homomorphisms $d_i$ do not respect these relations, so they do not descend to $\ch^F_k$. However, the alternating sum $\partial=\sum (-1)^i d_i$ does descend to a differential $\partial\colon \ch^F_k\to \ch^F_{k-1}$, and so we obtain a chain complex $\ch^F_\bullet=\cdots\to \ch^F_k\to \ch^F_{k-1}\to \cdots\to \ch^F_0$ with values in $[\FI,\AA]$. Note that on objects we have \[(\ch^F_k)_T=\bigoplus_{\substack{U\subset T\\\abs{U}=k}}F(T,U),\]
where FI-morphisms act with a factor of $\pm 1$ coming from the orientation of the subset $U$.
\end{construction}

\begin{numberedremark}
\label{remark:summand}
When the finite set $T$ is fixed, the following standard argument shows that the chain complex $\ch^F_\bullet(T)$ is a summand of $\chT^F_\bullet(T)$.
Choosing an ordering of $T$, let $\chT_\bullet^{\text{ord}}(T)$ be the subcomplex of $\chT^F_\bullet(T)$ spanned by those summands where $f\colon [k]\into T$ is order-preserving. The differential $\partial$ preserves this subcomplex, and the projection $\chT^F_\bullet(T)\to \ch^F_\bullet(T)$ restricts to an isomorphism $\chT_\bullet^{\text{ord}}(T)\iso \ch^F_\bullet(T)$. However, we emphasize that $\ch^F_\bullet$ is \emph{not} a summand of $\chT^F_\bullet$ when these are considered as complexes of FI-modules. 
\end{numberedremark}

We now use this construction to define a complex $C_\bullet\to K$ of FI-bimodules, which will give us our resolution of $K$.

\begin{definition}[name={The complex $C_\bullet$}]
\label{def:Cbullet}
Given $U\subset T$, let $F(T,U)$ be the $\FIop$-module defined by $F(T,U)_S=\Z[f\colon S\into T\setminus U]$. One easily checks that this defines a functor $F\colon \FIarrow\to \FIopMod$, so \autoref{const:chF} defines a chain complex $C_\bullet\coloneq \ch^F_\bullet$ with values in $[\FI,\FIopMod]$, i.e.\ a complex of FI-bimodules. Concretely, $C_k(S,T)$ is the free abelian group on pairs $(U\subset T,f\colon S\into T)$ where $\abs{U}=k$ and $\im f$ is disjoint from $U$.
\end{definition}

\begin{unnumberedremark}
See \cite[Eq (10)]{CEFN} and the surrounding section for more discussion of this complex. A caution: we could similarly have defined a complex $\chT^F_\bullet$ of FI-bimodules, but be warned that the $\FI^{\op}\times \FI$-module $B_V$ discussed following \cite[Corollary 2.18]{CEFN} is \emph{not} isomorphic to $\chT^F_\bullet$, although they contain much the same information.
\end{unnumberedremark}

\para{The resolution $C_\bullet\to K$}
We consider the augmentation map $\partial\colon C_0\to K$ defined by
\[C_0(S,T)\ni (\emptyset, f\colon S\into T)\mapsto \begin{cases}f\in \Bij(S,T)&\text{if }\abs{S}=\abs{T}\\
0&\text{if }\abs{S}<\abs{T}\end{cases}\in K(S,T).\]
Since $C_1(S,T)$ has basis $(\{u\}\subset T, f\colon S\into T\setminus\{u\})$, the composition $\partial^2\colon C_1\to C_0\to K$ is 0.
Therefore this augmentation extends $C_\bullet$ to a complex \[\cdots\to C_1\to C_0\to K\to 0.\]

\begin{proposition}
\label{prop:resolution}
The complex $C_\bullet\to K$ is a resolution of $K$ by FI$^{\op}$-projective FI-bimodules. As a consequence, given any FI-module $V$, the FI-homology of $V$ is computed by the FI-chain complex $V\otimes_{\FI} C_\bullet$: \[H_i^{\FI}(V)=H_i(V\otimes_{\FI} C_\bullet)\]
\end{proposition}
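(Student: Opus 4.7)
The plan is to prove the two assertions of the proposition separately, then deduce the consequence from \autoref{remark:Torbimodule} and \autoref{prop:TorK}.

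First, I would check that each $C_k$ is $\FIop$-projective as an FI-bimodule. Unwinding \autoref{def:Cbullet}, for a fixed finite set $T$ we have
\[(C_k)_T=\bigoplus_{\substack{U\subset T\\ \abs{U}=k}}\Z[\Inj(-,\,T\setminus U)],\]
which is a direct sum of the co-representable $\FIop$-modules $\Z[\Hom_{\FIop}(T\setminus U,-)]$. As noted in \autoref{remark:Torbimodule} these co-representables are projective, and a direct sum of projectives is projective, so $(C_k)_T$ is an $\FIop$-projective module for every $T$, i.e.\ $C_k$ is $\FIop$-projective as an FI-bimodule.

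Next I would verify exactness of $\cdots\to C_1(S,T)\to C_0(S,T)\to K(S,T)\to 0$ for every pair $(S,T)$. The key observation is that for fixed $(S,T)$ this augmented complex splits as a direct sum over injections $f\colon S\into T$: a basis element $(U,f)$ of $C_k(S,T)$ is determined by $f$ together with a $k$-subset $U\subset T\setminus\im f$, and the differential preserves the first coordinate. Setting $V_f\coloneq T\setminus\im f$, the $f$-summand $C_\bullet^f(S,T)$ is precisely the augmented simplicial chain complex of the full simplex on vertex set $V_f$ (with the basis vector $U=\emptyset$ playing the role of the $(-1)$-dimensional augmentation placed in degree~$0$). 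If $\abs{S}>\abs{T}$ there are no injections and both sides vanish. If $\abs{S}=\abs{T}$ then every injection is a bijection, so $V_f=\emptyset$ and $C_\bullet^f$ is concentrated in degree~$0$, where the augmentation identifies it with the summand $\Z\cdot f\subset K(S,T)=\Z[\Bij(S,T)]$; summing over $f\in\Bij(S,T)$ gives the desired exactness. If $\abs{S}<\abs{T}$ then $K(S,T)=0$ and every $V_f$ is nonempty; picking any distinguished vertex $v\in V_f$, the standard cone contraction $h(U,f)\coloneq (U\cup\{v\},f)$ for $v\notin U$ (and $0$ otherwise) gives a chain homotopy satisfying $\partial h+h\partial=\id$, so $C_\bullet^f$ is contractible. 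The step requiring the most care is verifying that the signs produced by the sign-representation quotient in \autoref{const:chF} agree with the signs of the standard simplicial boundary, so that the cone contraction goes through as written; this is routine but where I would be most careful with bookkeeping.

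Finally the consequence is immediate. By \autoref{prop:TorK} we have $H_i^{\FI}(V)=\Tor^{\FI}_i(V,K)$, and by \autoref{remark:Torbimodule} the groups $\Tor^{\FI}_i(V,K)$ may be computed from any resolution of $K$ by $\FIop$-projective FI-bimodules; applying this to the resolution $C_\bullet\to K$ just constructed yields $H_i^{\FI}(V)\iso H_i(V\otimes_{\FI}C_\bullet)$.
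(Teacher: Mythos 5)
Your proposal is correct and follows essentially the same route as the paper's proof: both verify $\FIop$-projectivity from the summand decomposition into co-representables $\Z[\Hom_{\FIop}(T\setminus U,-)]$, and both check exactness pointwise by splitting $C_\bullet(S,T)$ over the injections $f\colon S\into T$ and identifying each summand with the (augmented) chain complex of the simplex on $T\setminus\im f$. Your explicit cone contraction is just an unpacked form of the paper's appeal to contractibility of that simplex, and the sign bookkeeping you flag is handled in the paper by the same ``oriented appropriately'' identification.
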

\begin{proof}
We first verify that $C_\bullet\to K$ is a resolution, i.e.\ that $H_0(C_\bullet)\iso K$ and $H_*(C_\bullet) = 0$ for $*>0$. It suffices to check this pointwise, so fix finite sets $S$ and $T$ and consider the chain complex of abelian groups $C_\bullet(S,T)$.

For each $h\colon S\into T$, let $C^h_k(S,T)$ be the summand of $C_k(S,T)$ spanned by the elements of the form $(U,h)$. The differential $\partial$ preserves this summand, so we have a direct sum decomposition $C_\bullet(S,T)=\bigoplus_{h:S\into T} C^h_\bullet(S,T)$. Similarly, let $K^h(S,T)$ be the corresponding summand of $K(S,T)$; concretely, this summand is isomorphic to $\Z$ if $h$ is bijective and $0$ otherwise. It therefore suffices to show for fixed $h\colon S\into T$ that $C^h_\bullet(S,T)$ is a resolution of $K^h(S,T)$.

Let $\Delta^{T-h(S)}$ be the $(\abs{T-h(S)}-1)$-dimensional simplex with vertex set $T-h(S)$, and let $\widetilde{C}_\bullet(\Delta^{T-h(S)})$ be its reduced cellular chain complex. A basis for $C^h_k(S,T)$ is given by the $k$-element subsets $U$ of $T-h(S)$, oriented appropriately. In other words, we can identify $C^h_k(S,T)\iso \widetilde{C}_{k-1}(\Delta^{T-h(S)})$, and this extends to an isomorphism of chain complexes $C^h_\bullet(S,T)\iso \widetilde{C}_{\bullet-1}(\Delta^{T-h(S)})$.

As long as $T-h(S)$ is nonempty, the simplex $\Delta^{T-h(S)}$ is contractible, so $H_*(C^h_\bullet(S,T))\iso\widetilde{H}_{*-1}(\Delta^{T-h(S)})=0$ for all $*\geq 0$. Since $K^h(S,T)=0$ when $h$ is not bijective, this is as desired. In the remaining case when $h$ is a bijection and $\Delta^{T-h(S)}$ is empty, the only nonzero term of this resolution is $C^h_0(S,T)\iso \widetilde{C}_{-1}(\emptyset)\iso \Z\iso K^h(S,T)$, which again is as desired.

We next verify that the FI-bimodules $C_k$ are $\FIop$-projective, meaning that for each finite set $T$ the $\FIop$-module $C_k(-,T)$ is a projective $\FIop$-module. For a fixed $k$-element subset $U\subset T$, let $C^U_k(S,T)$ be the summand of $C_k(S,T)$ spanned by elements $(U,f\colon S\into T\setminus U)$. These summands are preserved by $\FIop$-morphisms, so this defines a summand $C^U_k(-,T)$ of the $\FIop$-module $C_k(-,T)$. This summand $C^U_k(-,T)$ is isomorphic to the co-representable functor $\Z[\Inj(-,T\setminus U)]=\Z[\Hom_{\FIop}(T\setminus U,-)]$. 
Since $C_k(-,T)=\bigoplus C^U_k(-,T)$, this shows that $C_k(-,T)$ is a projective $\FIop$-module, as desired.

It now follows from \autoref{prop:TorK} and \autoref{remark:Torbimodule} that $H_i^{\FI}(V)=H_i(V\otimes_{\FI} C_\bullet)$.
\end{proof}
\begin{numberedremark}
A result essentially equivalent to the conclusion of \autoref{prop:resolution} has been proved independently in a recent preprint of Gan--Li~\cite[Th 1]{GanLi}.
\end{numberedremark}

\begin{numberedremark}
It is possible to interpret $C_\bullet$ as the ``Koszul resolution of $\FI$ over $K$'', thinking of $f\in \Hom_{\FI}(S,T)$ as graded by $\abs{T}-\abs{S}=\abs{T-f(S)}$. Moreover, under Schur-Weyl duality $C_\bullet$ corresponds to the classical Koszul resolution of $\Sym^*V$ by $\bwedge^*V^\vee\otimes \Sym^*V$. For reasons of space we will not pursue this further here; see \cite[\S 6]{SamSnowdenGL} for more details, including strong theorems regarding this Koszul duality for FI-modules over $\mathbb{C}$.
\end{numberedremark}

We can now prove  \autoref{main:colimit}.
\begin{proof}[Proof of \autoref{main:colimit}]
The desired result states for a particular integer $N$ (namely the maximum of $\deg H^{\FI}_0(V)$ and $\deg H^{\FI}_1(V)$) that \begin{equation}
\label{eq:thmB}
\colim_{\substack{S\subset T\\\abs{S}\leq N}} V_S=V_T\qquad\text{for all finite sets }T.
\end{equation}
We introduced in \cite[Definition~2.19]{CEFN} a certain complex of FI-modules $\widetilde{S}_{-\bullet}V$, and 
combining our earlier results \cite[Theorem~C and Corollary 2.24]{CEFN} shows that \eqref{eq:thmB} holds if and only if $H_0(\widetilde{S}_{-\bullet}V)_n=0$ and $H_1(\widetilde{S}_{-\bullet}V)_n=0$ for all $n>N$.

Our main goal will be therefore to prove that $V\otimes_{\FI} C_\bullet\iso \widetilde{S}_{-\bullet}(V)$. Given this, we know that \[H_i(\widetilde{S}_{-\bullet}V)\iso H_i(V\otimes_{\FI} C_\bullet)\iso \Tor^{\FI}_i(V,K)\iso H^{\FI}_i(V),\]
where the second isomorphism holds by \autoref{prop:resolution} and the third isomorphism holds by \autoref{prop:TorK}. Therefore \eqref{eq:thmB} holds if and only if $H^{\FI}_0(V)_n=0$ and $H_1^{\FI}(V)_n=0$ for all $n>N$. In other words, the desired condition \eqref{eq:thmB} holds exactly when $\deg H^{\FI}_0(V)\leq K$ and $\deg H^{\FI}_1(V)\leq N$, which is precisely what the theorem claims.

Recall from \autoref{def:twisted} the category $\FIarrow$.
For any FI-module $V$, we can define an $\FIarrow$-module $F_V$ by $F_V(T,U)=V_{T\setminus U}$, since an $\FIarrow$-morphism $(T,U)\to (T',U')$ restricts to an inclusion $T\setminus U\into T'\setminus U'$. We first show that the complex of FI-modules $V\otimes_{\FI} C_\bullet$ coincides with the complex $\ch^{F_V}_\bullet$ of \autoref{const:chF}.

We saw in the proof of \autoref{prop:resolution} that $C_k(-,T)=\bigoplus_{\abs{U}=k} C_k^U(-,T)$ where $C_k^U(-,T)$ is the co-representable functor $\Z[\Hom_{\FIop}(T\setminus U,-)]$. By the Yoneda lemma, the tensor of $V$ with a functor co-represented by $R$ is simply $V_R$. Therefore as abelian groups we have an isomorphism
\[(V\otimes_{\FI} C_k)_T\iso \bigoplus_{\abs{U}=k}V_{T\setminus U}\iso \ch^{F_V}_k(T)\]
Checking the morphisms and differential, we see that $V\otimes_{\FI} C_\bullet$ and $\ch^{F_V}_\bullet$ coincide as chain complexes of FI-modules.

We conclude by showing that $\ch^{F_V}_\bullet$ coincides with $\widetilde{S}_{-\bullet}(V)$. We will in fact show that $\chT^{F_V}_\bullet$ coincides with the $S_n$-complex of FI-modules $B_\bullet(V)$ of 
\cite[Eq.\ (10)]{CEFN}.
As an abelian group \[\chT^{F_V}_k(T)= \bigoplus_{f: [k]\into T}V_{T\setminus \im f},\] 
and $B_k(V)_T$ is defined by the same formula \cite[Definition~2.9]{CEFN}.
Given an injection $g\colon T\into T'$, unwinding \autoref{const:chF} shows that the  map $g_*\colon \chT^{F_V}_k(T)\to \chT^{F_V}_k(T')$ sends the summand labeled by $f$ to the summand labeled by $g\circ f\colon [k]\into T'$ by the map $(g|_{T\setminus \im f})_*\colon V_{T\setminus \im f}\to V_{T'\setminus \im g\circ f}$. This is precisely the FI-structure on $B_k(V)$.
Finally, the maps $d_i$ of \autoref{const:chF} agree with those defined just before \cite[Eq.\ (10)]{CEFN}, so the resulting differentials $\partial=\sum(-1)^i d_i$ agree as well.

The $S_k$-actions on $\chT^{F_V}_k$ and on $B_k(V)$ agree, and $\ch^{F_V}_k$ and $\widetilde{S}_{-k}(V)$ are respectively obtained from these by tensoring over $S_k$ with the sign representation. So we conclude that $V\otimes_{\FI} \Ctilde_\bullet\iso \ch^{F_V}_\bullet$ is isomorphic to $ \widetilde{S}_{-\bullet}(V)$ as chain complexes of FI-modules, as desired.\end{proof}

\subsection{Homology of congruence subgroups}
\label{sec:congruence}
In this section, we state and prove \autoref{thm:congruencestronger}, a more general version of \autoref{main:congruence} from the introduction.

Let $R$ be a commutative ring satisfying Bass's stable range condition $SR_{d+2}$, and fix a proper ideal $\p\subsetneq R$. (We use Bass's indexing convention, under which a field satisfies $SR_2$, and any Noetherian $d$-dimensional ring satisfies $SR_{d+2}$.) Let $\Gamma_n(\p)$ be the congruence subgroup defined by the exact sequence of groups
\[1\to \Gamma_n(\p)\to \GL_n(R)\to \GL_n(R/\p)\]
As explained in \cite[\S3]{CEFN}, these groups form an FI-group $\Gamma(\p)$ (a functor $\FI\to \Groups$ satisfying $\Gamma(\p)_T\iso \Gamma_{\abs{T}}(\p)$), and thus their integral homology forms an FI-module
\[\HH_k\coloneq H_k(\Gamma(\p);\Z).\]
\begin{theorem-prime}{main:congruence}
\label{thm:congruencestronger}
Let $R$ be a commutative ring satisfying Bass's stable range condition $SR_{d+2}$, and let $\p\subsetneq R$ be a proper ideal. Then for all $k\geq 2$,
\[\deg H_0^{\FI}(\HH_k) \leq 2^{k-2}(2d+9)-2\qquad\text{and}\qquad \deg H_1^{\FI}(\HH_k) \leq 2^{k-2}(2d+9)-1.\]
In particular, for all $n\geq 0$ and all $k\geq 0$ we have
\begin{equation}
\label{eq:Putmangen}
H_k(\Gamma_n(\p);\Z)=\colim_{\substack{S\subset [n]\\\abs{S}<2^{k-2}(2d+9)}} H_k(\Gamma_S(\p);\Z).
\end{equation}
\end{theorem-prime}
\autoref{main:congruence} is the special case of \autoref{thm:congruencestronger} when $R=\Z$. Indeed, any Dedekind domain $R$ satisfies Bass's condition $SR_3$ (i.e.\ $SR_{d+2}$ for $d=1$), yielding the bound $\abs{S}<11\cdot 2^{k-2}$ in \autoref{main:congruence}. Note that although we take group homology with integer coefficients in the statement of \autoref{thm:congruencestronger}, these coefficients could be replaced by any other abelian group; the proof applies unchanged.

By the \emph{stable range}, we mean the range $n\geq 2^{k-2}(2d+9)$ where the description \eqref{eq:Putmangen} is not vacuous. 
Our stable range is slightly better than that of \cite{Putman}, where Putman obtained the range $n\geq 2^{k-2}(2d+16)-3$. For example, \cite[Theorem~B]{Putman} gives  for a Dedekind domain $R$ the stable range $n\geq 18\cdot 2^{k-2}-3$, while \autoref{thm:congruencestronger} gives the stable range $n\geq 11\cdot 2^{k-2}$. 

\begin{proof}[Proof of \autoref{thm:congruencestronger}]
To avoid confusion with the homology of a chain complex, in this section we write $H^{\FI}_p(W)$ for the FI-homology of an FI-module $W$ (which in previous sections was denoted simply $H_p(W)$).

An action of an FI-group $\Gamma$ on an FI-module $M$ is a collection of actions of $\Gamma_T$ on $M_T$ that are consistent with the FI-structure. 
Given such an action, the coinvariants form an FI-module $\Z\otimes_{\Gamma}M$, whose components are simply $\Z\otimes_{\Gamma_T}M_T$. The left-derived functors $H_i(\Gamma;M)$ are simply the FI-modules defined by $H_i(\Gamma;M)_T\coloneq H_i(\Gamma_T;M_T)$.
In the special case when $M=M(0)$ and the action is trivial, we write $\HH_i(\Gamma)$; this is the group homology, considered as an FI-module $\HH_i(\Gamma)_T\coloneq H_i(\Gamma_T;\Z)$.

We will need the following proposition, which constructs for any FI-group a spectral sequence based on the FI-homology of its group homology.
\begin{proposition}
\label{pr:Gammacomplex}
To any FI-group $\Gamma$ there is naturally associated an explicit FI-chain complex $X^\Gamma_\bullet$ on which $\Gamma$ acts, for which we have a spectral sequence
\[E^2_{pq}=H_p^{\FI}(\HH_q(\Gamma))\implies H_{p+q}(\Gamma;X^\Gamma_\bullet).\]
\end{proposition}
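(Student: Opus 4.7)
The plan is to construct $X^\Gamma_\bullet$ by applying \autoref{const:chF} to the $\FIarrow$-module $F_\Gamma(T,U)\coloneq\Z[\Gamma_T/\Gamma_{T\setminus U}]$, and then to read off the desired spectral sequence from one filtration of an evident double complex.

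First I would check that $F_\Gamma$ is a well-defined functor $\FIarrow\to\ZMod$: an $\FIarrow$-morphism is an injection $g\colon T\into T'$ with $g(U)\supseteq U'$, and since $\Gamma$ is an FI-group, $g$ induces $g_*\colon\Gamma_T\into\Gamma_{T'}$ satisfying $g_*(\Gamma_{T\setminus U})\subseteq \Gamma_{T'\setminus g(U)}\subseteq\Gamma_{T'\setminus U'}$, so the map $\gamma\cdot\Gamma_{T\setminus U}\mapsto g_*(\gamma)\cdot\Gamma_{T'\setminus U'}$ is well defined. Setting $X^\Gamma_\bullet\coloneq \ch^{F_\Gamma}_\bullet$ via \autoref{const:chF} yields an FI-chain complex with
\[
(X^\Gamma_k)_T=\bigoplus_{\substack{U\subset T\\\abs{U}=k}}\Z[\Gamma_T/\Gamma_{T\setminus U}],
\]
whose differential is the alternating sum over $u\in U$ of the surjections $\Z[\Gamma_T/\Gamma_{T\setminus U}]\onto\Z[\Gamma_T/\Gamma_{T\setminus(U\setminus\{u\})}]$ induced by the inclusion $\Gamma_{T\setminus U}\subseteq\Gamma_{T\setminus(U\setminus\{u\})}$. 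Each $\Gamma_T$ acts on $(X^\Gamma_k)_T$ by left multiplication on the coset spaces, and this action commutes with the FI-structure since $\Gamma$ is an FI-group, so $\Gamma$ acts on $X^\Gamma_\bullet$.

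The central calculation is to identify $H_q(\Gamma;X^\Gamma_\bullet)$. Shapiro's lemma applied to $\Gamma_{T\setminus U}\subseteq\Gamma_T$ gives $H_q(\Gamma_T;\Z[\Gamma_T/\Gamma_{T\setminus U}])\iso H_q(\Gamma_{T\setminus U};\Z)=\HH_q(\Gamma)_{T\setminus U}$, so pointwise $H_q(\Gamma;X^\Gamma_k)_T\iso\bigoplus_{\abs{U}=k}\HH_q(\Gamma)_{T\setminus U}$. Naturality of Shapiro's isomorphism under both subgroup inclusions and the FI-morphisms $g_*\colon\Gamma_T\into\Gamma_{T'}$ assembles these into an isomorphism of chain complexes of FI-modules $H_q(\Gamma;X^\Gamma_\bullet)\iso\widetilde{S}_{-\bullet}\HH_q(\Gamma)$. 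By \autoref{prop:resolution} together with the identification $V\otimes_{\FI}C_\bullet\iso\widetilde{S}_{-\bullet}V$ from the proof of \autoref{main:colimit}, the $p$-th homology of this complex is $H^{\FI}_p(\HH_q(\Gamma))$.

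To produce the spectral sequence, fix a functorial projective resolution $P_\bullet\to\Z$ of $\Z$ over $\Z\Gamma_T$ (for instance the bar resolution, which is manifestly functorial in the FI-group $\Gamma$) and form the double complex $D_{p,q}\coloneq X^\Gamma_p\otimes_\Gamma P_q$ of FI-modules. The total complex computes the hyperhomology FI-module $H_*(\Gamma;X^\Gamma_\bullet)$. Filtering by $p$-degree, the spectral sequence has $E^1_{p,q}=H_q(\Gamma;X^\Gamma_p)$ (since $P_\bullet\to\Z$ is a resolution), and its $E^2$ page takes homology in the $p$-direction of the chain complex $H_q(\Gamma;X^\Gamma_\bullet)$ of FI-modules, which by the previous paragraph equals $E^2_{p,q}=H^{\FI}_p(\HH_q(\Gamma))$. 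Convergence to $H_{p+q}(\Gamma;X^\Gamma_\bullet)$ is automatic since $X^\Gamma_\bullet(T)$ is bounded for each finite $T$ (it vanishes outside $0\leq k\leq\abs{T}$). The main obstacle is the bookkeeping in the second paragraph: one must verify that the Shapiro isomorphism turns the face maps of $\widetilde{S}_{-\bullet}\HH_q(\Gamma)$ (coming from the FI-restrictions $\HH_q(\Gamma)_{T\setminus U}\to \HH_q(\Gamma)_{T\setminus(U\setminus\{u\})}$) into the face maps of $H_q(\Gamma;X^\Gamma_\bullet)$ (induced on homology by the projections above). Both arise from the standard corestriction construction for the nested subgroups $\Gamma_{T\setminus U}\subseteq\Gamma_{T\setminus(U\setminus\{u\})}$, so the comparison holds, but the chain-level diagram deserves to be spelled out carefully.
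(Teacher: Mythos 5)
Your proposal is correct and follows essentially the same route as the paper: the same $\FIarrow$-module $\Z[\Gamma_T/\Gamma_{T\setminus U}]$ fed into \autoref{const:chF}, the hyperhomology spectral sequence of the $\Gamma$-action filtered so that $E^1_{pq}=H_q(\Gamma;X^\Gamma_p)$, and Shapiro's lemma identifying this row-complex (the paper writes it as $\HH_q(\Gamma)\otimes_{\FI}C_\bullet$, you as $\widetilde{S}_{-\bullet}\HH_q(\Gamma)$, which the proof of \autoref{main:colimit} shows are the same) so that \autoref{prop:resolution} gives $E^2_{pq}=H^{\FI}_p(\HH_q(\Gamma))$. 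The compatibility of the Shapiro isomorphisms with the differentials and the FI-structure, which you flag as needing care, is exactly the check the paper also performs by comparing the definitions of $X_\bullet$ and $C_\bullet$.
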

\begin{proof}
Recall from \autoref{def:twisted} the category $\FIarrow$ used in \autoref{const:chF}. Define the $\FIarrow$-module $A$ by $A(T,U)=\Z[\Gamma_T/\Gamma_{T\setminus U}]$. An $\FIarrow$-morphism $f\colon (T,U)\to (T',U')$ has $f(U)\supseteq U'$, so the induced map $f_*\colon \Gamma_T\to \Gamma_{T'}$ satisfies $f_*(\Gamma_{T\setminus U})\subset \Gamma_{T'\setminus U'}$, verifying that $A$ is indeed an $\FIarrow$-module.

The FI-chain complex $X_\bullet=X^\Gamma_\bullet$ we are interested in will be the FI-chain complex $X_\bullet\coloneq \ch^A_\bullet$ arising from $A$ via \autoref{const:chF}:
\[
X_k(T)=\bigoplus_{\substack{U\subset T\\\abs{U}=k}}\Z[\Gamma_T/\Gamma_{T\setminus U}]\]
For each $T$ the obvious action of $\Gamma_T$ on $\Z[\Gamma_T/\Gamma_{T\setminus U}]$ induces an action of $\Gamma_T$ on $X_k(T)$. The FI-module structure on $X_k$ is induced by the FI-structure maps $\Gamma_T\to \Gamma_{T'}$, and the differential $\partial$ descends from the \emph{identity} on $\Gamma_T$. Therefore the action of $\Gamma_T$ on $X_k(T)$ is  compatible with both, giving an action of the FI-group $\Gamma$ on the FI-chain complex $X_\bullet$.

From this action we obtain two spectral sequences converging to the homology
$H_*(\Gamma;X_\bullet)$ of the complex $X_\bullet$:
\begin{align*}
\overline{E}^2_{pq}&=H_p(\Gamma;H_q(X_\bullet))\implies H_{p+q}(\Gamma;X_\bullet)\\
E^1_{pq}&=H_q(\Gamma;X_p\qquad) \implies H_{p+q}(\Gamma;X_\bullet)
\end{align*}
The desired spectral sequence mentioned in the proposition is the second one (though we will use the first spectral sequence later). It remains to identify $E^2_{pq}$ with $H_p^{\FI}(\HH_q(\Gamma))$, so let us compute $E^1_{pq}=H_q(\Gamma;X_p)$.

By definition $X_p(T)$ is a direct sum of factors $\Z[\Gamma_T/\Gamma_{T\setminus U}]$. By Shapiro's lemma, the contribution of such a factor to $H_q(\Gamma_T; X_p(T))$ is precisely $H_q(\Gamma_{T\setminus U})= \HH_q(\Gamma)_{T\setminus U}$. 
We find that
\[H_q(\Gamma;X_p)_T=H_q(\Gamma_T; X_p(T))=\bigoplus_{\substack{U\subset T\\\abs{U}=p}}\HH_q(\Gamma)_{T\setminus U}=(\HH_q(\Gamma)\otimes_{\FI} C_p)_T,\] where the last equality comes from the proof of \autoref{main:colimit}.
We conclude that
\[E^1_{pq}=H_q(\Gamma;X_p)\iso \HH_q(\Gamma)\otimes_{\FI} C_p.\]
Moreover, the differential $d^1\colon H_q(\Gamma;X_p)\to H_q(\Gamma;X_{p-1})$ is induced by $\partial\colon X_p\to X_{p-1}$, and comparing the definitions of $X_\bullet$ and $C_\bullet$ shows that indeed $(E^1_{pq},d^1)=(\HH_q(\Gamma)\otimes_{\FI} C_\bullet,\partial)$. By \autoref{prop:resolution} we conclude that, as claimed,
\[E^2_{pq}=\Tor_p^{\FI}(\HH_q(\Gamma),K)=H^{\FI}_p(\HH_q(\Gamma)) \implies H_{p+q}(\Gamma;X_\bullet).\qedhere\]
\end{proof}

We now continue with the proof of \autoref{thm:congruencestronger}. Returning to the notation of that theorem, let $\Gamma$ be the congruence FI-group $\Gamma(\p)$, and $\HH_k=\HH_k(\Gamma(\p))$ its group homology. We would like to apply \autoref{pr:Gammacomplex}, but to do this we need to bound the equivariant homology $H_{p+q}(\Gamma;X_\bullet)$.
We can do this using the other spectral sequence $\overline{E}^2_{pq}=H_p(\Gamma;H_q(X_\bullet))\implies H_{p+q}(\Gamma;X_\bullet)$ if we can bound $H_q(X_\bullet)$. And fortunately, this complex $X_\bullet$ (or a complex quite close to  it) has already been considered by Charney!

In \autoref{pr:Gammacomplex} we defined $X_\bullet=\ch^A_\bullet$ based on the functor $A(T,U)=\Z[\Gamma_T/\Gamma_{T\setminus U}]$. Let $\XX_\bullet\coloneq \chT^A_\bullet$ be the ordered version of this complex; concretely, we can write
\[\XX_k(T)=\bigoplus_{(t_1,\ldots,t_k)\subset T}\!\!\!\Z[\Gamma_T/\Gamma_{T-\{t_1,\ldots,t_k\}}].\]
In the foundational paper \cite{Charney}, Charney considered (in the case $T=\{1,\ldots,n\}$) a complex $Y_\bullet(T)$ that is  similar to $\XX_\bullet(T)$ but somewhat larger. Her key technical result in that paper is that $Y_\bullet(T)$ is $q$-acyclic if $\abs{T}\geq 2q+d+1$. Moreover \cite[Proposition~3.2]{Charney} implies that $Y_q(T)$ coincides with $\XX_q(T)$ as long as $\abs{T}\geq q+d$, so Charney's result\footnote{Note that our indexing differs from Charney's in that her complex has $\XX_q(T)$ in degree $q-1$; this is why we have $2q+d+1$ and $q+d$ in place of her $2q+d+3$ and $q+d+1$, respectively.} implies that $\XX_\bullet(T)$ is $q$-acyclic if $\abs{T}\geq 2q+d+1$. By \autoref{remark:summand} we know that $X_\bullet(T)$ is a summand of $\XX_\bullet(T)$, so $X_\bullet(T)$ is $q$-acyclic in the same range.
Said differently, $H_q(X_\bullet)_T= 0$ for $\abs{T}>2q+d$; that is, $\deg H_q(X_\bullet)\leq 2q+d$. 

Any FI-module $M$ with $\deg M\leq N$ automatically has $\deg H_i(\Gamma;M)\leq N$ for all $i$, since $H_*(\Gamma;M)=H_*(\Gamma_T;0)=0$ when $\abs{T}>N$. Therefore Charney's bound  $\deg H_q(X_\bullet)\leq 2q+d$ implies \[\deg \overline{E}^2_{pq}=\deg H_p(\Gamma;H_q(X_\bullet))\leq 2q+d\] for all $p$.
Since this spectral sequence converges to $ \overline{E}^2_{pq}\implies H_{p+q}(\Gamma;X_\bullet)$, we conclude that \begin{equation}
\label{eq:Hkbound}
\deg H_k(\Gamma;X_\bullet)\leq 2k+d.
\end{equation}

The bound \eqref{eq:Hkbound} marks the end of the input from topology in this proof. The remainder of the proof is just careful bookkeeping and repeatedly applying \autoref{main:regularity} to our spectral sequence of FI-modules \[E^2_{pq}=H_p^{\FI}(\HH_q)\implies H_{p+q}(\Gamma;X_\bullet).\] In fact, this bookkeeping can be formulated as the following completely general statement: 

\para{Claim} Consider  a spectral sequence of FI-modules $E^2_{pq}\implies V_{p+q}$ converging to FI-modules $V_k$ satisfying $\deg V_k\leq 2k+d$ for some integer $d\geq 0$. Suppose that for all $q$ we know that
\begin{equation}
\tag{$\alpha$}
\label{eq:mainthmsub}
\deg E^2_{pq}\leq \deg E^2_{0q}+\deg E^2_{1q}-1+p,
\end{equation}
and suppose for simplicity that $E^2_{p0}=0$ for $p>0$.
Then for all $k\geq 2$ we have
\begin{equation}
\label{eq:H01bounds}
\deg E^2_{0k}\leq 2^{k-2}(2d+9)-2\text{\quad and \quad}\deg E^2_{1k}\leq 2^{k-2}(2d+9)-1.
\end{equation}

We would like to prove this claim \eqref{eq:H01bounds}
 by induction on $k$ for all $k\geq 2$, but we need to modify it slightly so it holds in the base cases $k\in \{0,1\}$ as well. Therefore we will prove along the way that
\begin{equation}
\label{eq:Hpbounds} 
\forall p\geq 2:\qquad\deg E^2_{pk}\leq 2^{k-1}(2d+9)-4+p
\end{equation}
holds for all $k \geq 0$. Notice that \eqref{eq:H01bounds} + \eqref{eq:mainthmsub} $\implies$ \eqref{eq:Hpbounds}, so this only requires additional work in the base cases when $k \in \set{0,1}$. We first prove  \eqref{eq:Hpbounds} in these base cases, and then prove by induction on $k$ that both \eqref{eq:H01bounds} and \eqref{eq:Hpbounds} hold for all $k \geq 2$.

\para{Case $k=0$} Our assumption that $E^2_{p0}=0$ for all $p\geq 1$ implies $\deg E^2_{p0}=-\infty$, so   \eqref{eq:Hpbounds} holds.

\para{Case $k=1$} Since $E^2_{3,0}=0$ and $E^2_{4,0}=0$, the spectral sequence degenerates at $E^2$ for $E^2_{0,1}$ and $E^2_{1,1}$, yielding $E^2_{0,1}=E^\infty_{0,1}=V^1$ and $E^2_{1,1}=E^\infty_{1,1}\subset V^2$. Since $\deg V^1\leq 2+d$ and $\deg V^2\leq 4+d$, we conclude that $\deg E^2_{0,1}\leq d+2$ and $\deg E^2_{1,1}\leq d+4$. Applying the assumption \eqref{eq:mainthmsub}, we conclude that $\deg E^2_{p,1}\leq 2d+5+p$ for all $p\geq 2$; this is precisely the bound \eqref{eq:Hpbounds} in the case $k=1$.

\para{General case} Let $N_{p,m}\coloneq 2^{m-1}(2d+9)-4+p$ be the bound occurring in \eqref{eq:Hpbounds}. Fix $k\geq 2$, and assume by induction that \eqref{eq:Hpbounds} holds for all $m < k$; that is, $\deg E^2_{p,m}\leq N_{p,m}$ for all $p\geq 2$ and all $m<k$.

Now consider the entry $E^2_{0,k}$. Since $E^\infty_{0,k}$ is a constituent of $V^k$, we have $\deg E^\infty_{0,k}\leq \deg V^k\leq 2d+k$. No nontrivial differential has source $E^r_{0,k}$, but we have differentials $d^r\colon E^r_{r,k-r+1}\to E^r_{0,k}$. The  maximum of $N_{r,k-r+1}$ over $r\geq 2$ occurs when $r=2$, when we have $N_{2,k-1}=2^{k-2}(2d+9)-2$. Therefore for all $r\geq 2$ the sources of these differentials satisfy $\deg E^r_{r,k-r+1}\leq 2^{k-2}(2d+9)-2$. Since $\deg E^\infty_{0,k}\leq 2d+k< 2^{k-2}(2d+9)-2$, we conclude that $\deg E^2_{0,k}\leq 2^{k-2}(2d+9)-2$, as claimed in \eqref{eq:H01bounds}.
Similarly, the degrees of the sources of the differentials $d^r\colon E^r_{1+r,k-r+1}\to E^r_{1,k}$ are bounded above by $N_{3,k-1}=2^{k-2}(2d+9)-1$. Since $\deg E^\infty_{1,k}\leq \deg V^{k+1}\leq 2d+k+1< 2^{k-2}(2d+9)-1$, we conclude that $\deg E^2_{1,k}\leq 2^{k-2}(2d+9)-1$, as claimed in \eqref{eq:H01bounds}. 

Now applying the assumption \eqref{eq:mainthmsub} to \eqref{eq:H01bounds}, we conclude that \eqref{eq:Hpbounds} holds for $k$ as well. This concludes the proof of the claim.

\medskip
We now finish the proof of \autoref{thm:congruencestronger} by applying this claim to the spectral sequence $E^2_{pq}=H_p^{\FI}(\HH_q)\implies H_{p+q}(\Gamma;X_\bullet)$ of \autoref{pr:Gammacomplex}. The hypothesis $E^2_{p0}=0$ of the claim is satisfied because $\HH_0$ is the free FI-module $\HH_0\iso M(0)$, so $E^2_{p0}=H_p^{\FI}(\HH_0)=0$ for $p>0$. The assumption \eqref{eq:mainthmsub} is precisely the statement of \autoref{main:regularity}, and the bound $\deg H_k(\Gamma;X_\bullet)\leq 2k+d$ was obtained in \eqref{eq:Hkbound} above.
 
The description \eqref{eq:Putmangen} for $k\geq 2$ follows from \eqref{eq:H01bounds} by \autoref{main:colimit}. The only thing that remains is some arithmetic to check that \eqref{eq:Putmangen}  holds for $k=0$ and $k=1$ as well.

For $k=0$ this is trivial, since $\HH_0=M(0)$ is free: this means  $\deg H_0^{\FI}(\HH_0)=0$ and $\deg H_1^{\FI}(\HH_0)=-\infty$, so  \autoref{main:colimit} then gives an identification as in \eqref{eq:Putmangen} over $\abs{S}\leq 0$. Since $d\geq 0$, we have $2^{0-2}(2d+9)\geq \frac{9}{4}>1$, so the bound in \eqref{eq:Putmangen} holds.

Similarly, for $k=1$ we saw in the proof above that $\deg H_0^{\FI}(\HH_1)=\deg E^2_{0,1}\leq 2+d$ and $\deg H_1^{\FI}(\HH_1)=\deg E^2_{1,1}\leq 4+d$, so \autoref{main:colimit} gives an identification as in \eqref{eq:Putmangen} over $\abs{S}\leq 4+d$. For integer $m$ the conditions $m<2^{1-2}(2d+9)=d+\frac{9}{2}$ and $m\leq d+4$ are equivalent, so again the bound in \eqref{eq:Putmangen} follows.
\end{proof}

We close with a variant of \autoref{thm:congruencestronger} which has been used by Calegari--Emerton \cite{CalegariEmerton} in their study of completed homology. An inclusion of ideals $\q\subset \p$ induces an inclusion $\Gamma_n(\q)\subset \Gamma_n(\p)$, so given an inverse system of ideals such as $\cdots\subset \p^i\subset \cdots\subset \p^2\subset \p$ we can consider the inverse limit ${\displaystyle{\lim_{\leftarrow} H_k(\Gamma_n(\p^i))}}$ of the homology of the corresponding congruence subgroups.
\begin{theorem-doubleprime}{main:congruence}
\label{thm:congruencecompleted}
Let $R$ be the ring of integers in a number field, and let $(\p_i)_{i\in I}$ be an inverse system of proper ideals in $R$. Fix $N > 1$. Then for all $n\geq 0$ and all $k\geq 0$ we have
\[
\lim_{\substack{\leftarrow\\i\in I}}H_k(\Gamma_n(\p_i);\Z/N)=\colim_{\substack{S\subset [n]\\\abs{S}<11\cdot 2^{k-2}}}\ \ \lim_{\substack{\leftarrow\\i\in I}}\ H_k(\Gamma_S(\p_i);\Z/N).
\]
\end{theorem-doubleprime}
\begin{proof}
Any number ring $R$ is a Dedekind domain, so $R$ satisfies Bass's stable range condition $SR_3$. Therefore for any $n\geq 0$ and any $k\geq 0$, we can deduce from \autoref{thm:congruencestronger} that
\[\lim_{\substack{\leftarrow\\i\in I}}H_k(\Gamma_n(\p_i);\Z/N)= \lim_{\substack{\leftarrow\\i\in I}}\ \ \colim_{\substack{S\subset [n]\\\abs{S}<11\cdot 2^{k-2}}}\ H_k(\Gamma_S(\p_i);\Z/N).\] It remains to check that we can exchange the limit and colimit. This is of course not true in general, but we can verify that it is true in this case as follows. The existence of the Borel--Serre compactification \cite{BorelSerre} implies that $H_k(\Gamma_n(\p);\Z/N)$ is a finitely-generated $\Z/N$-module for any $\p\subset R$. This is enough to give the desired result: since this colimit is over a finite poset, it can therefore be written as a coequalizer of finitely generated $\Z/N$-modules. The limit of the coequalizers is the coequalizer of the limits (any inverse system of finite abelian groups satisfies the Mittag--Leffler condition, so the $\lim^1$ term vanishes), which is to say that the limit and colimit can be exchanged as desired.
\end{proof}

\begin{footnotesize}
\noindent
\begin{tabular*}{\linewidth}[t]{@{}p{\widthof{Department of Mathematics}+1.5in}@{}p{\widthof{Department of Mathematics}+0.5in}}
{\raggedright
Thomas Church\\
Department of Mathematics\\
Stanford University\\
450 Serra Mall\\
Stanford, CA 94305\\
\myemail{church@math.stanford.edu}}
&
{\raggedright
Jordan S. Ellenberg\\
Department of Mathematics\\
University of Wisconsin\\
480 Lincoln Drive\\
Madison, WI 53706\\
\myemail{ellenber@math.wisc.edu}}
\end{tabular*}\hfill
\end{footnotesize}

\end{document}